\newtheorem{theorem}{Theorem}
\newtheorem{lemma}{Lemma}
\newtheorem{corollary}{Corollary}
\newtheorem{proposition}{Proposition}
\newtheorem{assumption}{Assumption}
\theoremstyle{definition}
\newtheorem{definition}{Definition}
\theoremstyle{remark}
\newtheorem{example}{Example}
\newtheorem{remark}{Remark}
\newtheorem{observation}{Observation}
\def\ackname{Acknowledgments}%
\def\acknowledgment{\par\addvspace{17pt}\small\rmfamily
	\trivlist\if!\ackname!\item[]\else
	\item[\hskip\labelsep
	{\bfseries\ackname}]\fi}
\newenvironment{acknowledgments}{\begin{acknowledgment}}
	{\end{acknowledgment}}
\begin{document}
	\title {Intersection disjunctions for reverse convex sets}
	\author{Eli Towle\thanks{etowle@wisc.edu} \and James Luedtke\thanks{jim.luedtke@wisc.edu}}
	\date{\small Department of Industrial and Systems Engineering, University of Wisconsin -- Madison}
	
	\maketitle
	
	\begin{abstract}
	We present a framework to obtain valid inequalities for a reverse convex set: the set of points in a polyhedron that lie outside a given open convex set. Reverse convex sets arise in many models, including bilevel optimization and polynomial optimization. An intersection cut is a well-known valid inequality for a reverse convex set that is generated from a basic solution that lies within the convex set. We introduce a framework for deriving valid inequalities for the reverse convex set from basic solutions that lie outside the convex set. We first propose an extension to intersection cuts that defines a two-term disjunction for a reverse convex set, which we refer to as an intersection disjunction. Next, we generalize this analysis to a multi-term disjunction by considering the convex set's recession directions. These disjunctions can be used in a cut-generating linear program to obtain valid inequalities for the reverse convex set.
		
		\noindent \rule{0pt}{1.5em}\textbf{Keywords:} Mixed-integer nonlinear programming; valid inequalities; reverse convex sets; disjunctive programming; intersection cuts
	\end{abstract}
	
	\begin{acknowledgments}
		\begin{sloppypar}
			This material is based upon work supported by the U.S.\ Department of Energy, Office of Science, Office of Advanced Scientific Computing Research (ASCR) under Contract DE-AC02-06CH11357. The authors acknowledge partial support through NSF grant SES-1422768.
		\end{sloppypar}
	\end{acknowledgments}

\section{Introduction}\label{sec:intro}

A reverse convex set is a set of the form $\P \setminus \Q$, where $\P \subseteq \R^n$ is a polyhedron and $\Q \subseteq \R^n$ is an open convex set. This is a general set structure arising in the context of mixed-integer nonlinear programming (MINLP). In this setting, $\P$ is a linear programming relaxation of the MINLP feasible region, and $\Q$ contains no solutions feasible to the problem. We are motivated by cases where $\cl(\Q)$ is either non-polyhedral or is defined by a large number of linear inequalities, because if $\cl(\Q)$ is a polyhedron defined by a small number of inequalities, we can optimize and separate over $\clconv(\P \setminus \Q)$ efficiently using the disjunctive programming techniques of \citet{balas1979,balas1974}. We study valid inequalities for reverse convex sets. These inequalities can be used to strengthen the convex relaxation of any problem for which an open convex set containing no feasible points can be identified; such sets are known as {\em convex $S$-free sets} (e.g., \citet{conforti2014b}).

Intersection cuts are valid inequalities for $\P \setminus \Q$. Intersection cuts were introduced in the context of concave minimization by \citet{tuy1964} and later by \citet{balas1971} for integer programming. The inequalities of \citet{tuy1964} are often referred to as ``concavity cuts'' or ``$\gamma$-valid'' cuts. For ease of exposition, we refer to such inequalities as ``intersection cuts'' due to their similarity to the inequalities of \citet{balas1971}. An intersection cut is generated from a basic solution $\xbasis$ of $\P$ that satisfies $\xbasis \in \Q$. A basic solution $\xbasis$ of $\P$ corresponding to basis $B$ forms the apex of a translated simplicial cone $\PB$ defining a relaxation of $\P$. For each extreme ray of this cone, a point on $\bd(\Q)$ that intersects the extreme ray is found. A hyperplane $c^\tp x = d$ is formed, such that the hyperplane passes through all of these points and satisfies $c^\tp \xbasis > d$. The intersection cut $c^\tp x \leq d$ is valid for $\P \setminus \Q$. For a detailed review of intersection cuts, see Section~\ref{sec:ic}.

Our main contribution in this work is to show how valid inequalities for $\P \setminus \Q$ can be obtained from a basic solution $\xbasis$ of $\P$ {\em in the case where $\xbasis \notin \cl(\Q)$}. Because $\xbasis \notin \Q$, intersection cuts generated using the cone $\PB$ are not valid or even well-defined in general. However, under the assumption that each extreme ray of $\PB$ that does not intersect $\Q$ lies in the recession cone of $\Q$, we present two linear inequalities that form a two-term disjunction that contains $\P \setminus \Q$. If $\P$ intersected with one of these inequalities is empty, the inequality defining the other disjunctive term is valid for $\P \setminus \Q$. We call inequalities obtained in this manner {\em external intersection cuts}. If both disjunctive terms are nonempty, we can generate valid inequalities for $\P \setminus \Q$ using the standard cut-generating linear program (CGLP) for disjunctive programming of \citet{balas1979,balas1974}. We refer to these disjunctions as {\em intersection disjunctions}.

\citet{glover1974} exploits the recession structure of $\Q$ to strengthen the intersection cut. This motivates us to extend our analysis by considering $\recc(\Q)$, the recession cone of $\Q$. We present a relaxation of $\PB \setminus \Q$ that incorporates the structure of $\recc(\Q)$. We provide a class of valid inequalities for the relaxation, the size of which grows exponentially with the dimension of the problem. We derive a polynomial-size extended formulation that captures the full strength of this exponential family of inequalities. We then prove that the proposed relaxation of $\PB\setminus \Q$ is equivalent to the union of at most $n$ possibly nonconvex sets, thereby forming a disjunction that contains the reverse convex set. Under some assumptions, we propose a polyhedral relaxation of each disjunctive term individually. Given these polyhedral relaxations, we can use a CGLP to generate disjunctive cuts for $\P \setminus \Q$.

This paper is organized as follows. In Section~\ref{subsec:relatedlit}, we review related literature. In Section~\ref{subsec:rc-examples}, we provide motivating examples of reverse convex sets. In Section~\ref{sec:ic}, we review intersection cuts. In Section~\ref{sec:cuts-1}, we present a two-term disjunction that contains $\P \setminus \Q$ and is generated by basic solutions of $\P$ that lie outside of $\Q$. In Section~\ref{sec:cuts-2}, we extend this analysis by presenting a multi-term disjunction for $\P \setminus \Q$ by considering $\recc(\Q)$. We propose extended formulations that can be used to define polyhedral relaxations of the disjunctive terms.

\subsection{Related literature}\label{subsec:relatedlit}

The problem of optimizing a linear function over a reverse convex set is known as linear reverse convex programming (LRCP). \citet{tuy1987} shows that any convex program with multiple reverse convex constraints can be reduced to one with a single reverse convex constraint with the introduction of an additional variable and an additional convex constraint. By reduction from a concave minimization problem, optimizing a linear function over a reverse convex set is NP-hard. \citet{matsui1996} shows that this holds even in special cases restricting the structure of the linear constraints or the convex set $\Q$. Reverse convex optimization problems were first studied from a global optimality perspective in the 1970s (e.g., \citet{bansal1975b,bansal1975}, \citet{hillestad1975}, and \citet{ueing1972}). \citet{hillestad1980b,hillestad1980} presented one of the first cutting plane algorithms for LRCP, though it does not always converge to an optimal solution (e.g., \citet{gurlitz1985}). Numerous algorithms for solving LRCP have been developed. \citet{gurlitz1991} present a partial enumeration procedure, \citet{thuong1984} propose sequentially solving concave minimization problems, and \citet{fulop1990} proposes a cutting plane algorithm that cuts off edges of the polyhedron that do not contain an optimal solution. \citet{bensaad1990} present a cutting plane algorithm to solve LRCP based on level sets, but later showed that it does not converge to a globally optimal solution \cite{bensaad1994}. Branch-and-bound methods from concave minimization literature have also been adapted to solve reverse convex optimization problems (e.g., \citet{horst1988}, \citet{horst1990}, \citet{muu1985}, and \citet{ueing1972}).

\citet{hillestad1980} define the concept of a basic solution for LRCP. They show that the convex hull of the feasible region of LRCP is a polytope if the linear constraints form a polytope and the functions defining the reverse convex constraints are differentiable. \citet{sen1987} extend this result, showing that the closure of the convex hull of any polyhedron intersected with a finite number of reverse convex constraints is a polyhedron.

In cutting plane algorithms for MINLP problems, intersection cuts may be constructed from the basis corresponding to an optimal solution to an LP relaxation of the problem. Infeasible basic solutions of $\P$ within $\Q$ are also candidates for generating intersection cuts for $\P \setminus \Q$ and may yield intersection cuts that are not dominated by those generated from feasible basic solutions. Gomory mixed-integer (GMI) cuts for mixed-integer linear programming behave similarly. In particular, \citet {nemhauser1990} showed that the intersection of GMI cuts from all basic solutions is equivalent to the split closure. This is not true when considering only GMI cuts from basic feasible solutions (e.g., \citet{cornuejols2001}).

Intersection cuts can be generated from any convex set that does not contain feasible points in its interior. In integer programming, these are maximal lattice-free convex sets. More generally, these types of sets are convex $S$-free sets. When considering a fixed basis, intersection cuts generated using a convex set $\Q$ can only be stronger than those produced using a subset of $\Q$. \citet{balas2013} generalize intersection cuts such that inequalities for $\P \setminus \Q$ can be obtained using a more general polyhedron rather than a translated simplicial cone. \citet{glover1974} proposes improved intersection cuts for the special case where $\Q$ is a polyhedron. Intersection cuts omit from the inequality variables corresponding to extreme rays of $\PB$ that lie within the recession cone of $\Q$. \citet{glover1974} uses the polyhedron's recession information to include these terms with negative coefficients, thereby strengthening the cut. A similar strengthening is proposed for polynomial optimization problems by \citet{bienstock2020}.

The idea of improving the intersection cut by considering $\recc(\Q)$ has been studied in the context of minimal valid functions. \citet{dey2010} consider minimal valid functions for a polyhedral $\cl(\Q)$ and note that the minimal valid function for $\PB \setminus \Q$ is the uniquely defined intersection cut if $\xbasis \in \Q$ and $\interior(\recc(\Q)) = \emptyset$. Results from this work were extended by \citet{basu2010} and \citet{basu2011}. \citet{fukasawa2011} use the nonnegativity of integer variables to derive minimal valid inequalities for a mixed-integer set. These inequalities consider recession directions of a relevant convex lattice-free set and thus may contain negative variable coefficients.

Ultimately, standard approaches to generating intersection cuts for $\P \setminus \Q$ require a basic solution of $\P$ that lies within the convex set $\Q$. In this paper, we present a framework for constructing valid inequalities for $\P \setminus \Q$ using a basic solution that lies outside $\cl(\Q)$.

\subsection{Motivating examples}\label{subsec:rc-examples}

In many MINLP problems, a reverse convex set $\P \setminus \Q$ can be derived via a problem reformulation. In this case, the polyhedron $\P$ is a relaxation of the MINLP feasible region, and the set $\Q$ is an open convex set which is known to contain no solutions feasible to the MINLP. We can use the set $\P \setminus \Q$ to derive valid inequalities for the original problem. We motivate our study of reverse convex sets by showing how this structure appears in a variety of MINLP contexts. For all of the examples that follow, the closure of the set $\Q$ we derive is non-polyhedral, or possibly defined by a large number of linear inequalities.

One example of this reverse convex structure appears in difference of convex (DC) functions (e.g., \citet{tuy1986}). A function $f \colon \R^n \rightarrow \R$ is a DC function if there exist convex functions $g, h \colon \R^n \rightarrow \R$ such that $f(x) = g(x) - h(x)$ for all $x \in \R^n$. A DC set can be written as
\begin{align}
	\{x \in \R^n \colon g(x) - h(x) \leq 0 \}. \label{eq:dcset}
\end{align}
Equivalently, we can write \eqref{eq:dcset} as $\proj_x (\dcset)$, where $\dcset \coloneqq \{(x,t) \in \R^n \times \R \colon g(x) - t \leq 0,\ h(x) - t \geq 0 \}$. The convex set $\Q = \{(x,t) \in \R^n \times \R \colon h(x) - t < 0 \}$ contains no points feasible to $\dcset$. \citet{hartman1959} shows the class of DC functions is broad, subsuming all twice continuously differentiable functions.

Reverse convex sets also appear in the context of polynomial optimization. \citet{bienstock2020} consider the set of symmetric matrices representable as the outer-product of a vector with itself: $\{x x^\tp \colon x \in \R^n \}$. Polynomial optimization problems can be reformulated to include the constraint that a square matrix of variables is outer-product representable. \citet{bienstock2020} construct non-polyhedral {\em outer-product-free} sets $\Q$ that do not contain any matrices representable as an outer-product of some vector, and as such are not feasible to the problem. Accordingly, they present families of cuts for $\P \setminus \Q$, where $\P$ is formed by the linear constraints of the problem reformulation. They characterize sets that are {\em maximal} outer-product-free, that is, not contained in any other outer-product-free sets. For the specific case of quadratically constrained programs (QCPs), \citet{saxena2010} use disjunctive programming techniques to derive valid inequalities for a reverse convex set in an extended variable space. In a companion paper, \citet{saxena2011} suggest the following {\em eigen-reformulation} of the quadratic constraint $x^\tp A x + a^\tp x + b \leq 0$:
\begin{align*}
	\mash{\sum\limits_{j \colon \lambda_j > 0}}\ &\lambda_j (v_j^\tp x)^2 + \mash{\sum\limits_{j \colon \lambda_j < 0}} \lambda_j s_j + a^\tp x + b \leq 0 \\
	s_j =\ &(v_j^\tp x)^2 \quad \forall j \colon \lambda_j < 0,
\end{align*}
where $\lambda_1, \ldots, \lambda_n$ denote the eigenvalues of $A$ and $v_1, \ldots, v_n$ the corresponding eigenvectors. The convex set $\{ (x,s) \in \R^n \times \R^n \colon s_j > (v_j^\tp x)^2\ \forall j \textrm{ s.t. } \lambda_j < 0\}$ does not contain any points feasible to QCP.

Reverse convex sets can also be used to define relaxations of bilevel optimization problems. Bilevel programs include constraints of the form $d^\tp y \leq \mathit{\Phi}(x)$, where $\mathit{\Phi}(x)$ is the {\em value function} of the lower-level problem for a fixed top-level decision $x$:
\begin{align*}
	\mathit{\Phi}(x) &\coloneqq \min_{y} \{ d^\tp y \colon Ax + By \leq b\}.
\end{align*}
The function $\mathit{\Phi}(\cdot)$ is convex. The set $\{(x,y) \colon d^\tp y > \mathit{\Phi}(x)\}$ is defined by a reverse convex inequality and does not contain any points feasible to the bilevel program. The closure of this set is polyhedral, but may be defined by a large number of linear inequalities. \citet{fischetti2016} propose intersection cuts for a specific class of bilevel integer programming problems.

\section{Intersection cut review}\label{sec:ic}

We briefly review intersection cuts, following the presentation of \citet{conforti2014}. Let $A \in \R^{m \times n}$ be a matrix with full row rank and let $b \in \R^m$. Let $\P = \{x \in \R^n_{+} \colon Ax = b\}$ be a polyhedron. Let $\Q \subseteq \R^n$ be an open convex set. We are interested in valid inequalities for the reverse convex set $\P \setminus \Q$.

For a basis $\B$ of $\P$, let $N = \{1, \ldots, n\} \setminus \B$ be the nonbasic variables. For some $\abar \in \R^{|\B| \times |N|}$ and $\bbar \in \R^{|\B|}_{+}$, we can rewrite $\P$ as
\begin{align*}
	\P &= \Big\lbrace x \in \R^n \colon x_i = \bbar_i - \sum_{j \in N} \abar_{ij} x_j, i \in \B,\ x_j \geq 0, j = 1, \ldots, n \Big\rbrace.
\end{align*}
The basic solution corresponding to basis $\B$ is $\xbasis$, where $\xbasis_i = \bbar_i$ if $i \in B$, and $0$ if $i \in N$. By removing the nonnegativity constraints on variables $x_i$, $i \in \B$, we obtain $\PB$, the cone admitted by the basis $\B$. The basic solution $\xbasis$ forms the apex of $\PB \supseteq \P$. There is an extreme ray $\rbar^j$ of $\PB$ for each $j \in N$:
\begin{align*}
	\rbar^j_k &=
	\begin{cases}
		-\abar_{kj} & \textrm{if } k \in B \\
		1 & \textrm{if } k = j \\
		0 & \textrm{if } k \in N \setminus \{j\}.
	\end{cases}
\end{align*}
The conic hull of the extreme rays $\{\rbar^j \colon j \in N\}$ forms the recession cone of $\PB$. Together, the basic solution $\xbasis$ and these extreme rays provide a complete internal representation of $\PB$, namely, $\PB = \{ \xbasis + \sum_{j \in N} x_j \rbar^j \colon x \in \R^{|N|}_{+} \}$.

Intersection cuts are valid inequalities for $\PB \setminus \Q$ constructed from basic solutions of $\P$ that lie within $\Q$. These cuts are transitively valid for $\P \setminus \Q \subseteq \PB \setminus \Q$. Assume $\xbasis \in \Q$. For each $j \in N$, let $\exit_j$ be defined as
\begin{align}\label{eq:ic-exit-def}
	\exit_j &\coloneqq \sup\{ \exit \geq 0 \colon \xbasis + \exit \rbar^j \in \Q \}.
\end{align}
The set $\{\xbasis + \exit_j \rbar^j \colon j \in N \}$ is the set of points where the extreme rays of $\PB$ emanating from $\xbasis$ leave the set $\Q$. Because $\Q$ is open, $\exit_j > 0$ for all $j \in N$. If $\exit_j = +\infty$, $\rbar^j$ lies in the recession cone of $\Q$.

The following inequality is valid for $\P \setminus \Q$ (\citet{balas1971}):
\begin{align}
	\sum\limits_{j \in N} \frac{x_j}{\exit_j} \geq 1. \label{eq:int-cut-inequality}
\end{align}
We refer to \eqref{eq:int-cut-inequality} as the {\em standard intersection cut}. Here, and throughout the paper, we use the convention that $x / \pm\infty \coloneqq 0$.

\textbf{Notation.} Let $\extreals \coloneqq \R \cup \{-\infty, +\infty\}$ be the extended real numbers. For a nonzero vector $r \in \R^n$ and $\enter,\exit \in \extreals$, we define the line segment $\intervalr{\enter}{\exit}{} \coloneqq \{\lambda r \colon \lambda \in (\enter, \exit) \}$. Closed brackets (e.g., $\clintervalr{\enter}{\exit}{}$) denote the inclusion of one or both endpoints of the line segment. The set $\intervalr{\enter}{\exit}{}$ is unbounded if and only if $\enter = -\infty$ or $\exit = +\infty$. We remark that $\lcintervalnor{0}{+\infty}r$ is equivalent to $\cone(r)$. However, we use the notation $\lcintervalnor{0}{+\infty}r$ for consistency.

\section{Intersection disjunctions and external intersection cuts}\label{sec:cuts-1}

\phantom{}In this paper, we consider a fixed basis $B$ and corresponding basic solution $\xbasis$. Let $\PB$ be defined as in Section~\ref{sec:ic}. For the remainder of this paper, we assume the basic solution $\xbasis$ lies outside of $\cl(\Q)$. Recall $\PB$ is a translated simplicial cone with apex $\xbasis$ and linearly independent extreme rays $\{\rbar^j \colon j \in N\}$. For all $j \in N$, let $\exit_j$ be defined as in \eqref{eq:ic-exit-def}, and let
\begin{alignat*}{2}
	\enter_j &\coloneqq \inf\{ \enter \geq 0 \colon \xbasis + \enter \rbar^j \in \Q \}.
\end{alignat*}
For $j \in N$, we use the convention $\enter_j = +\infty$ and $\exit_j = -\infty$ if the set $\{\xbasis\} + \lcinterval{0}{+\infty}{j}$ does not intersect $\Q$. If $\cl(\Q)$ is polyhedral and $\{\xbasis\} + \lcinterval{0}{+\infty}{j}$ intersects $\Q$, $\enter_j$ and $\exit_j$ can be obtained by solving a linear program. If $\cl(\Q)$ is non-polyhedral, a convex program may be required to obtain these parameters. An exception is the case where $\Q$ is bounded and a point in $\Q \cap (\{\xbasis\} + \lcinterval{0}{+\infty}{j})$ is known a priori, in which case a binary search can be performed to find the values of $\enter_j$ and $\exit_j$.

We partition $N$ into the following three sets:
\begin{align*}
	\N0 &\coloneqq \{j \in N \colon \enter_j = +\infty, \exit_j = -\infty\} \\
	\N1 &\coloneqq \{j \in N \colon \enter_j \in (0, +\infty), \exit_j = +\infty\} \\
	\N2 &\coloneqq \{j \in N \colon \enter_j \in (0, +\infty), \exit_j \in (\enter_j, +\infty)\}.
\end{align*}
For $j \in \N0$, the halfline $\{\xbasis\} + \lcinterval{0}{+\infty}{j}$ does not intersect $\Q$.
Observe $\rbar^j \in \recc(\Q)$ for $j \in \N1$.


Throughout Section~\ref{sec:cuts-1}, we make the following assumption.
\begin{assumption}\label{ass:1}
	It holds that $\rbar^j \in \recc(\Q)$ for all $j \in \N0$.
\end{assumption}
 We say a disjunction is {\em valid} for a set if the disjunction contains the set. Theorem~\ref{thm:n1-n2} proposes a valid disjunction for $\PB \setminus \Q \supseteq \P \setminus \Q$.
\begin{theorem}\label{thm:n1-n2}
	Under Assumption~\ref{ass:1}, for every $x \in \PB \setminus \Q$, either
	\begin{align}
		\sum_{j \in N} \frac{x_j}{\enter_j} &\leq 1, \textrm{ or } \sum_{j \in N} \frac{x_j}{\exit_j} \geq 1. \label{eq:dis}
	\end{align}
\end{theorem}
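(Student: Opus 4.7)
The plan is to prove the contrapositive: if $x \in \PB$ satisfies both $\alpha := \sum_{j \in N} x_j/\enter_j > 1$ and $\beta := \sum_{j \in N} x_j/\exit_j < 1$, then $x \in \Q$. Two facts about the setup drive the argument. First, since $\Q$ is open and convex, for every $j \in \N1 \cup \N2$ and every $\lambda_j \in (\enter_j, \exit_j)$ (with $\exit_j = +\infty$ when $j \in \N1$), the point $\xbasis + \lambda_j \rbar^j$ lies in $\Q$. Second, by Assumption~\ref{ass:1} together with the observation that $\rbar^j \in \recc(\Q)$ for $j \in \N1$, every $\rbar^j$ with $j \in \N0 \cup \N1$ is a recession direction of $\Q$.

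The main step is a direct decomposition of $x = \xbasis + \sum_{j \in N} x_j \rbar^j$. For parameters $\lambda_j \in (\enter_j, \exit_j)$, $j \in \N1 \cup \N2$, setting $\mu_j := x_j/\lambda_j$ yields
\begin{align*}
\sum_{j \in \N1 \cup \N2} \mu_j (\xbasis + \lambda_j \rbar^j) + \sum_{j \in \N0} x_j \rbar^j = \left(\sum_{j \in \N1 \cup \N2} \mu_j\right) \xbasis + \sum_{j \in N} x_j \rbar^j,
\end{align*}
which equals $x$ precisely when $\sum_{j \in \N1 \cup \N2} x_j/\lambda_j = 1$. When this condition holds, $(\mu_j)$ is a vector of nonnegative weights summing to one, so the right-hand side is a convex combination of points in $\Q$ plus a vector in $\recc(\Q)$. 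Using the standard property $\Q + \recc(\Q) \subseteq \Q$ for an open convex $\Q$, I conclude $x \in \Q$, which is what I need.

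The crux is therefore the existence of $\lambda_j \in (\enter_j, \exit_j)$ making $\sum_{j \in \N1 \cup \N2} x_j/\lambda_j = 1$. Here I would argue by continuity: as $\lambda_j$ sweeps $(\enter_j, \exit_j)$, the term $x_j/\lambda_j$ sweeps the open interval $(x_j/\exit_j, x_j/\enter_j)$ when $x_j > 0$ and equals $0$ when $x_j = 0$. Since these choices can be made independently, the sum $\sum_{j \in \N1 \cup \N2} x_j/\lambda_j$ attains every value in the open interval $(\beta, \alpha)$. By hypothesis $1 \in (\beta, \alpha)$, so suitable $\lambda_j$ exist. The degenerate case where $x_j = 0$ for all $j \in \N1 \cup \N2$ is ruled out because it would force $\alpha = 0$, contradicting $\alpha > 1$.
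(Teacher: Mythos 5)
Your proposal is correct and follows essentially the same route as the paper: both decompose $x$ as a convex combination of points $\xbasis + \lambda_j \rbar^j \in \Q$ (with weights $x_j/\lambda_j$ summing to one) plus the $\recc(\Q)$-vector $\sum_{j \in \N0} x_j \rbar^j$. The only difference is cosmetic --- the paper exhibits the $\lambda_j$ explicitly via a $\gamma \in (0,1)$ with $\gamma \alpha + (1-\gamma)\beta = 1$ (taking $\lambda_j = \enter_j\exit_j/(\gamma\exit_j + (1-\gamma)\enter_j)$), whereas you obtain their existence by an intermediate-value argument.
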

\begin{proof}
	If $N = \N0$, then $\enter_j = +\infty$ for all $j \in N$, and all $x \in \PB \setminus \Q$ trivially satisfy $\sum_{j \in N} x_j / \enter_j \leq 1$. We prove the result for $N \neq \N0$. Assume $\xhat \in \PB$ satisfies $u \coloneqq \sum_{j \in N} \xhat_j /\enter_j > 1$ and $\ell \coloneqq \sum_{j \in N} \xhat_j / \exit_j < 1$. We show $\xhat \in \Q$.

	Because $u > 1$ and $\ell < 1$, there exists $\gamma \in (0, 1)$ such that $\gamma u + (1 - \gamma)\ell = 1$. For all $j \in N$, let $\theta_j \coloneqq \gamma \xhat_j / \enter_j + (1 - \gamma) \xhat_j / \exit_j \in [0, 1]$. It holds that $\theta_j = 0$ if and only if $j \in \N0$ or $\xhat_j = 0$. Therefore, $\sum_{j \in \N1 \cup \N2 \colon \xhat_j > 0} \theta_j = 1$. We write $\xhat$ as
	\begin{align}
		\xhat &= \ \mash{\sum\limits_{\substack{j \in \N1 \cup \N2 \colon \\ \xhat_j > 0}}} \theta_j \bigg( \xbasis + \frac{\xhat_j}{\theta_j} \rbar^j \bigg) + \sum\limits_{j \in \N0} \xhat_j \rbar^j. \label{eq:thm1-case1}
	\end{align}
	Consider $j \in \N1 \cup \N2$ satisfying $\xhat_j > 0$. If $j \in \N1$, then $\xhat_j / \theta_j = \enter_j / \gamma \in (\enter_j, \exit_j)$. Similarly, if $j \in \N2$, then $\xhat_j / \theta_j = \enter_j \exit_j / (\gamma \exit_j + (1 - \gamma) \enter_j) \in (\enter_j, \exit_j)$. In both cases, $\xbasis + (\xhat_j / \theta_j) \rbar^j \in \Q$.

	By \eqref{eq:thm1-case1}, $\xhat$ is a convex combination of points in $\Q$ plus an element of $\recc(\Q)$. Thus, $\xhat \in \Q$.
\end{proof}

\begin{remark}\label{remark:0}
	The two-term disjunction \eqref{eq:dis} can be used in a disjunctive framework to generate valid inequalities for $\P \setminus \Q$. Specifically, the set $\P \setminus \Q$ is a subset of $\P_1 \cup \P_2$, where $\P_1 \coloneqq \{x \in \P \colon \sum_{j \in N} x_j / \enter_j \leq 1\}$ and $\P_2 \coloneqq \{x \in \P \colon \sum_{j \in N} x_j / \exit_j \geq 1\}$. The sets $\P_1$ and $\P_2$ are polyhedral, because $\P$ is a polyhedron and the inequalities \eqref{eq:dis} are linear. We can obtain valid inequalities for $\conv(\P \setminus \Q)$ by generating valid inequalities for $\conv(\P_1 \cup \P_2)$ using the disjunctive programming approach of \citet{balas1979,balas1974}.
\end{remark}

\begin{remark}\label{remark:1}
	We consider the relationship between the two-term disjunction \eqref{eq:dis} and the standard intersection cut. The two-term disjunction \eqref{eq:dis} assumes that the basic solution $\xbasis$ does not lie within $\cl(\Q)$. If $\xbasis \in \Q$, then $\N0 = \emptyset$ (trivially, every extreme ray of $\PB$ emanating from $\xbasis \in \Q$ intersects $\Q$) and $\enter_j = 0$ for all $j \in N$. Because $\enter_j = 0$ for all $j \in N$, the inequality $\sum_{j \in N} x_j / \enter_j \leq 1$ of \eqref{eq:dis} is ill-defined. Instead, we can show that all points in $\PB \setminus \Q$ lie in either $\{\xbasis\}$ or $\{x \in \R^n \colon \sum_{j \in N} x_j / \exit_j \geq 1\}$. However, because $\{\xbasis\} \subseteq \Q$, we can conclude that the inequality $\sum_{j \in N} x_j / \exit_j \geq 1$ is valid for $\PB \setminus \Q$. This is precisely the standard intersection cut of \citet{balas1971}.
\end{remark}

\begin{example}\label{ex:pq}
	Let $\P = \R_{+}^2$ and $\Q = \{ x \in \R^2 \colon (x_1 - 1)^2 - x_2 < 1/2 \}$. Consider $\PB$ generated by the (only) basic solution of $\P$, $\xbasis = (0,0) \notin \Q$. In this case, $\PB = \P$. The feasible region $\P \setminus \Q$ is the disconnected set shaded in Figure~\ref{fig:t-1}. The inequalities \eqref{eq:dis} form a valid disjunction for $\P \setminus \Q$, shown in Figure~\ref{fig:two-way-dis}.
	
	\begin{figure}
		\begin{subfigure}[t]{0.46\textwidth}
			\vspace*{0pt}
			\centering
			\includegraphics[width=39mm]{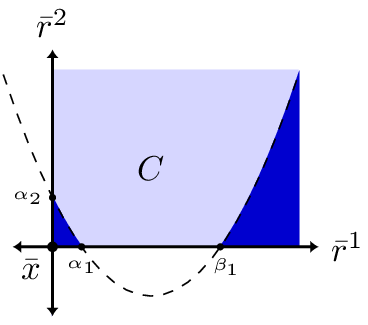}
			\caption{The set $\P \setminus \Q$ for Example~\ref{ex:pq} is the darkened, disconnected region.}
			\label{fig:t-1}
		\end{subfigure}%
		\hfill
		\begin{subfigure}[t]{0.46\textwidth}
			\vspace*{0pt}
			\centering
			\includegraphics[width=39mm]{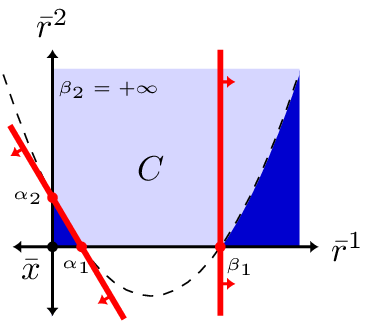}
			\caption{Every point in the darkened set $\P \setminus \Q$ satisfies one of the inequalities \eqref{eq:dis}.}
			\label{fig:two-way-dis}
		\end{subfigure}
		\caption{The two-term disjunction of Theorem~\ref{thm:n1-n2} applied to Example~\ref{ex:pq}.}
	\end{figure}
\end{example}

Proposition~\ref{prop:termwise-conv} states that if $\Q$ is bounded, then the inequality defining each term of \eqref{eq:dis} is sufficient to define the convex hull of the points in $\PB \setminus \Q$ satisfying that inequality. This is not true if the interior of $\recc(\Q)$ is nonempty, as shown by \citet{dey2010} for the standard intersection cut \eqref{eq:int-cut-inequality}.
\begin{proposition}\label{prop:termwise-conv}
	Under Assumption~\ref{ass:1}, if $\Q$ is bounded, then 
	\begin{align*}
		\conv(\{x \in \PB \setminus \Q \colon \textstyle\sum_{j \in N} x_j / \enter_j \leq 1\}) &= \{x \in \PB \colon \textstyle\sum_{j \in N} x_j / \enter_j \leq 1\}, \textrm{ and} \\
		\conv(\{x \in \PB \setminus \Q \colon \textstyle\sum_{j \in N} x_j / \exit_j \geq 1\}) &= \{x \in \PB \colon \textstyle\sum_{j \in N} x_j / \exit_j \geq 1\}.
	\end{align*}
\end{proposition}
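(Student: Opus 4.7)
The plan is to prove each equality by showing the nontrivial inclusion $\subseteq$; the reverse direction is immediate from convexity of the right-hand sides. Before tackling either, I would first extract what boundedness of $\Q$ forces. Since $\recc(\Q) = \{0\}$, no $\rbar^j$ can lie in $\recc(\Q)$, so Assumption~\ref{ass:1} gives $\N0 = \emptyset$, and likewise $\N1 = \emptyset$. Hence $N = \N2$, and every $\enter_j$ and $\exit_j$ is finite and positive with $\enter_j < \exit_j$. Since $\Q$ is open, the extremal values in the definitions of $\enter_j$ and $\exit_j$ are not attained, placing both $\xbasis + \enter_j \rbar^j$ and $\xbasis + \exit_j \rbar^j$ on $\bd(\Q)$ and hence in $\PB \setminus \Q$. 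Combined with the standing assumption $\xbasis \notin \cl(\Q)$, this supplies a concrete collection of points in $\PB \setminus \Q$ to work with.

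For the first equality, set $H_1 \coloneqq \{x \in \PB \colon \textstyle\sum_{j\in N} x_j/\enter_j \le 1\}$. In the nonbasic coordinates $(x_j)_{j \in N}$, $H_1$ is the standard simplex with $|N|+1$ vertices, namely $\xbasis$ and $\xbasis + \enter_j \rbar^j$ for $j \in N$, all of which lie in $H_1 \setminus \Q$ by the previous paragraph. The inclusion $H_1 \subseteq \conv(H_1 \setminus \Q)$ then follows from $H_1 = \conv(\text{vertices})$.

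For the second equality, set $H_2 \coloneqq \{x \in \PB \colon \textstyle\sum_{j\in N} x_j/\exit_j \ge 1\}$. A parallel vertex enumeration shows $H_2$ has vertex set $V \coloneqq \{\xbasis + \exit_j \rbar^j \colon j \in N\} \subseteq \PB \setminus \Q$ and recession cone $\cone(\{\rbar^j \colon j \in N\})$. By Minkowski--Weyl, any $x \in H_2$ decomposes as $x = v + r$ with $v \in \conv(V)$ and $r$ in this recession cone. If $r = 0$, then $x \in \conv(V) \subseteq \conv(H_2 \setminus \Q)$ and I am done. Otherwise, for $k \ge 1$ I would write
\begin{equation*}
x = \tfrac{k-1}{k}\, v + \tfrac{1}{k}(v + kr),
\end{equation*}
and argue that $v + kr \in H_2$, while linear independence of $\{\rbar^j : j \in N\}$ together with $r \ne 0$ forces $\|v + kr\| \to \infty$ as $k \to \infty$. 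Boundedness of $\Q$ then yields $v + kr \notin \Q$ for $k$ sufficiently large, exhibiting $x$ as a convex combination of points in $\conv(H_2 \setminus \Q)$.

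The step I expect to require the most care is the second equality: since $H_2$ is unbounded, a pure vertex-only argument does not suffice, and the ``stretch-to-infinity'' trick is what bridges the gap. This is precisely where boundedness of $\Q$ enters---consistent with the proposition's aside that the conclusion fails once $\interior(\recc(\Q)) \ne \emptyset$, a setting in which no finite $k$ would push $v + kr$ out of $\Q$.
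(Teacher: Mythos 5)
Your argument is correct. For the first equality it coincides with the paper's proof: after the same reduction to $N = \N2$, the paper writes each $\xhat \in \PB$ with $\sum_{j \in N}\xhat_j/\enter_j \le 1$ explicitly as a convex combination of $\xbasis$ and the points $\xbasis + \enter_j\rbar^j$, which is precisely your ``simplex equals the convex hull of its vertices'' step. The paper omits the second equality (``similar techniques''), and here your route diverges from the natural analogue: you invoke the Minkowski--Weyl decomposition $x = v + r$ and a limiting argument that pushes $v + kr$ outside the bounded set $\Q$. That is valid, but a direct mirror of the first half is shorter: with $\sigma \coloneqq \sum_{j\in N}\xhat_j/\exit_j \ge 1$, one has $\xhat = \sum_{j \in N}\bigl(\xhat_j/(\sigma\exit_j)\bigr)(\xbasis + \sigma\exit_j\rbar^j)$, the weights are nonnegative and sum to one, and each point $\xbasis + \sigma\exit_j\rbar^j$ satisfies the inequality with value $\sigma \ge 1$ and lies outside $\Q$, because $\sigma\exit_j \ge \exit_j$ and $\xbasis + \lambda\rbar^j \notin \Q$ for all $\lambda \ge \exit_j$ (by the definition of $\exit_j$ as a supremum together with the openness and convexity of $\Q$). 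The direct version also isolates what is actually needed---finiteness of every $\exit_j$ rather than boundedness of $\Q$ per se---whereas your stretch-to-infinity step genuinely uses boundedness. Either way, the proposition is established.
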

\begin{proof}
	We show only that $\conv(\{x \in \PB \setminus \Q \colon \textstyle\sum_{j \in N} x_j / \enter_j \leq 1\}) = \{x \in \PB \colon \textstyle\sum_{j \in N} x_j / \enter_j \leq 1\}$, as the second statement can be shown using similar techniques. Under Assumption~\ref{ass:1}, $\Q$ bounded implies $N = \N2$.
	
	Because $\PB \setminus \Q \subseteq \PB$ and the set $\{x \in \PB \colon \textstyle\sum_{j \in N} x_j / \enter_j \leq 1\}$ is convex, $\conv(\{x \in \PB \setminus \Q \colon \textstyle\sum_{j \in N} x_j / \enter_j \leq 1\}) \subseteq \{x \in \PB \colon \textstyle\sum_{j \in N} x_j / \enter_j \leq 1\}$. Next, let $\xhat \in \PB$ satisfy $\sum_{j \in N} \xhat_j / \enter_j \leq 1$. Then
	\begin{align}
		\xhat &= \xbasis + \sum\limits_{j \in N} \xhat_j \rbar^j = \sum\limits_{j \in N} \frac{\xhat_j}{\enter_j} (\xbasis + \enter_j \rbar^j) + \bigg( 1 - \sum\limits_{j \in N} \frac{\xhat_j}{\enter_j} \bigg) \xbasis \nonumber \\
		&\in \conv(\{\xbasis\} \cup \{\xbasis + \enter_j \rbar^j \colon j \in N\}) \subseteq \conv(\P \setminus \Q). \label{eq:conv-points-ineq}
	\end{align}
	Because $\xbasis_j = 0$ for all $j \in N$, we have $\sum_{j \in N} \xbasis_j / \enter_j = 0$. For $i \in N$, let $y^i_j \coloneqq \xbasis + \enter_i \rbar^i$. For any $i,j \in N$, $y^i_j$ equals $\enter_j$ if $i = j$, and $0$ otherwise. Hence, for all $i \in N$, $\sum_{j \in N} y^i_j / \enter_j = 1$. Continuing from \eqref{eq:conv-points-ineq}, we have $\xhat \in \conv(\{x \in \PB \setminus \Q \colon \textstyle\sum_{j \in N} x_j / \enter_j \leq 1\})$.
\end{proof}

The disjunction presented in Theorem~\ref{thm:n1-n2} can be particularly useful if $\P$ is empty when intersected with one of the inequalities \eqref{eq:dis}. In this case, the inequality defining the other disjunctive term is valid for $\PB \setminus \Q$.
\begin{definition}
	If $\{x \in \P \colon \sum_{j \in N} x_j / \exit_j \geq 1\} = \emptyset$, we refer to the inequality $\sum_{j \in N} x_j / \enter_j \leq 1$ as an {\em external intersection cut}. We say the same for the inequality $\sum_{j \in N} x_j / \exit_j \geq 1$ if $\{x \in \P \colon \sum_{j \in N} x_j / \enter_j \leq 1\} = \emptyset$.
\end{definition}
External intersection cuts are valid for $\P \setminus \Q$. We provide an example where intersection cuts are insufficient to define $\conv(\P \setminus \Q)$, but the facet-defining inequality for $\conv(\P \setminus \Q)$ can be obtained from an external intersection cut. In order to derive the inequalities \eqref{eq:dis}, we must first translate our polyhedral set to standard form, using additional slack variables as necessary. We then select a basis and calculate $\enter_j$ and $\exit_j$ for all $j \in N$. In this example and all that follow, we intentionally omit the intermediate steps required to obtain these inequalities, presenting them in the original variable space.
\begin{example}\label{ex:intro}		
	Let
	\begin{align*}
		\P &= \{ x \in \R_{+}^2 \colon -x_1 + 3x_2 \leq 3/2 \} \\
		\Q &= \{ x \in \R^2 \colon ||x||_2 < 1 \}.
	\end{align*}
	As can be seen in Figure~\ref{fig:unobtainable}, no standard intersection cut is able to generate the inequality that is facet-defining for $\conv(\P \setminus \Q)$. However, the basic solution $\xbasis = (-3/2, 0) \notin \cl(\Q)$ corresponding to the constraints $x_2 \geq 0$ and $-x_1 + 3x_2 \leq 3/2$ generates this inequality as an external intersection cut. For this basic solution, the set $\{x \in \P \colon \sum_{j \in N} x_j / \enter_j \leq 1\}$ is empty, implying that the inequality $\sum_{j \in N} x_j / \exit_j \geq 1$ is valid for $\P \setminus \Q$. Figure~\ref{fig:obtainable} shows the inequalities \eqref{eq:dis} for this example.
	
	We note that in this example, there does exist an open convex set $\Q' \supseteq \Q$ such that $\xbasis \in \Q'$ and the intersection cut defined by $\xbasis$ with respect to $\Q$ generates the facet-defining inequality for $\conv(\P \setminus \Q)$. For instance, one such set is $\Q' = \Q \cup \{ x \in \R^2 \colon -1 < x_2 < 1,\ x_1 < 0\}$. Methods for enlarging the set $\Q$ to generate intersection cuts stronger than those produced by $\Q$ are outside the scope of this work, though this topic has been studied by \citet{balas1972}.
	
	\begin{figure}
		\begin{subfigure}[t]{0.44\textwidth}
			\vspace*{0pt}
			\centering
			\includegraphics[height=40mm]{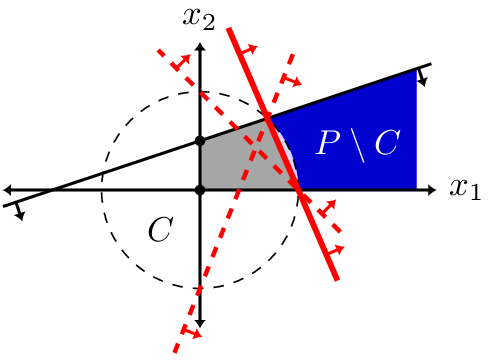}
			\caption{The facet-defining inequality (solid line) for the reverse convex set in Example~\ref{ex:intro} is {\em not} obtainable as a standard intersection cut (dashed lines) from one of the two basic solutions of $\P$ that lie within $\Q$.}
			\label{fig:unobtainable}
		\end{subfigure}
		\begin{subfigure}[t]{0.05\textwidth}
			\quad
		\end{subfigure}%
		\begin{subfigure}[t]{0.44\textwidth}
			\vspace*{0pt}
			\centering
			\includegraphics[height=40mm]{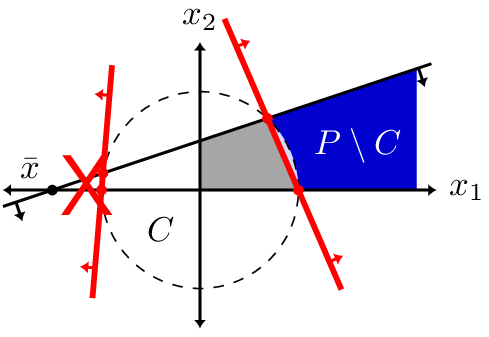}
			\caption{An external intersection cut from the basic solution $\xbasis = (-3/2, 0)$ defines the facet-defining inequality for $\conv(\P \setminus \Q)$ for Example~\ref{ex:intro}. The set $\{x \in \P \colon \sum_{j \in N} x_j / \enter_j \leq 1\}$ is empty, so the inequality $\sum_{j \in N} x_j / \exit_j \geq 1$ is valid for $\P \setminus \Q$.}
			\label{fig:obtainable}
		\end{subfigure}
		\caption{An external intersection cut for Example~\ref{ex:intro}.}
	\end{figure}
\end{example}

\begin{example}\label{ex:consequence}
	Figure~\ref{fig:consequence} depicts another example of an external intersection cut. The extreme rays of $\PB$ enter into the convex set $\Q$ and remain within $\Q$ on an unbounded interval, so $\{ x \in \P \colon \sum_{j \in N} x_j / \exit_j \geq 1 \} = \emptyset$. The external intersection cut $\sum_{j \in N} x_j / \enter_j \leq 1$ is valid for $\P \setminus \Q$.
	\begin{figure}[ht]
		\centering
		\includegraphics[width=46mm]{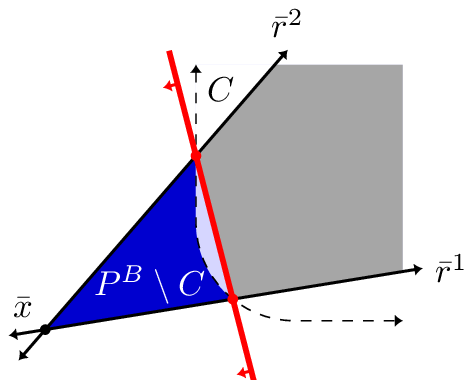}
		\caption{The external intersection cut $\sum_{j \in N} x_j / \enter_j \leq 1$ for Example~\ref{ex:consequence} arises when $\{x \in \P \colon \sum_{j \in N} x_j / \exit_j \geq 1\} = \emptyset$. Because the extreme rays of the translated simplicial cone $\PB$ recede into $\Q$, $\exit_j = +\infty$ for all $j \in N$, and hence $\sum_{j \in N} x_j / \exit_j = 0$.}
		\label{fig:consequence}
	\end{figure}
\end{example}
\begin{definition}
	If $\{ x \in \P \colon \sum_{j \in N} x_j / \exit_j \geq 1 \} \neq \emptyset$ and $\{ x \in \P \colon \sum_{j \in N} x_j / \enter_j \leq 1 \} \neq \emptyset$, we say the disjunction \eqref{eq:dis} is an {\em intersection disjunction} for $\P \setminus \Q$.
\end{definition}
If \eqref{eq:dis} is an intersection disjunction for $\P \setminus \Q$, we can use a disjunctive CGLP to generate valid inequalities for $\conv (\P \setminus \Q)$ using the techniques of \citet{balas1979,balas1974}.

We provide an example of why Assumption~\ref{ass:1} is necessary for the validity of the two-term disjunction of Theorem~\ref{thm:n1-n2}.
\begin{example}\label{ex:ass1}
	Consider the reverse convex set shown in Figure~\ref{fig:n0-1}. The extreme ray $\rbar^2$ does not intersect the bounded convex set $\Q$, so \eqref{eq:dis} is not a disjunction for $\P \setminus \Q$. Figure~\ref{fig:n0-2} shows the same example but with the halfline $\lcinterval{0}{+\infty}{2}$ added to the set $\Q$. Because Assumption~\ref{ass:1} holds, Theorem~\ref{thm:n1-n2}'s disjunction is valid for $\P \setminus \Q$.
	
	\begin{figure}
		\vspace*{0pt}
		\begin{subfigure}[t]{0.44\textwidth}
			\vspace*{0pt}
			\centering
			\includegraphics[width=46mm]{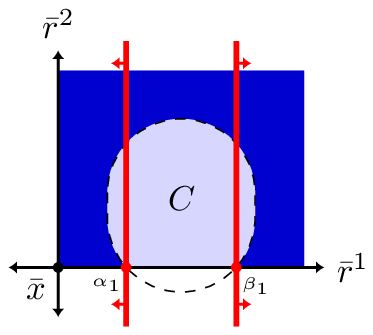}
			\caption{Theorem~\ref{thm:n1-n2}'s disjunction is \emph{not} valid if there exists $j \in \N0$ such that $\rbar^j \notin \recc(\Q)$. In this case, $\rbar^2 \in \N0$, but $\rbar^2 \notin \recc(\Q)$. There are points in the darkened region $\P \setminus \Q$ that satisfy neither of the two inequalities shown.}
			\label{fig:n0-1}
		\end{subfigure}%
		\begin{subfigure}[t]{0.05\textwidth}
			\quad
		\end{subfigure}%
		\begin{subfigure}[t]{0.44\textwidth}
			\vspace*{0pt}
			\centering
			\includegraphics[width=46mm]{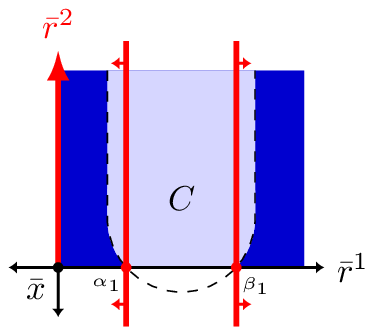}
			\caption{The validity of Theorem~\ref{thm:n1-n2}'s disjunction relies on Assumption~\ref{ass:1}. If we modify $\Q$ so $\rbar^2 \in \recc(\Q)$, the disjunction of Theorem~\ref{thm:n1-n2} is valid.}
			\label{fig:n0-2}
		\end{subfigure}
		\caption{Assumption~\ref{ass:1} is necessary for our analysis.}
	\end{figure}
\end{example}

Our final example of this section motivates considering how the recession cone can be used to derive more general valid disjunctions for $\PB \setminus \Q$.
\begin{example}\label{ex:useless}
	Let $\P = \R^2_{+}$, and
	\begin{align*}
		\Q &= \Big\lbrace (x_1,x_2) \colon \Big(x_1 - \frac{3}{4} \Big)^2 + \Big(x_2 - \frac{1}{4} \Big)^2 < \frac{1}{4} \Big\rbrace + \cone\Big( \begin{bmatrix} 1 \\ 1 \end{bmatrix}, \begin{bmatrix} 2 \\ 1 \end{bmatrix} \Big).
	\end{align*}
	Figure~\ref{fig:n0-insufficient-mod} provides a graphical representation of $\PB \setminus \Q$, where $\PB$ is generated from the basic solution $\xbasis = (0,0)$. Assumption~\ref{ass:1} does not hold; namely, $2 \in \N0$, but $\rbar^2 \notin \recc(\Q)$. However, there exists a valid two-term disjunction for $\PB \setminus \Q$ that cannot be obtained with the theory of this section.
	\begin{figure}[ht]
		\centering
		\includegraphics[width=46mm]{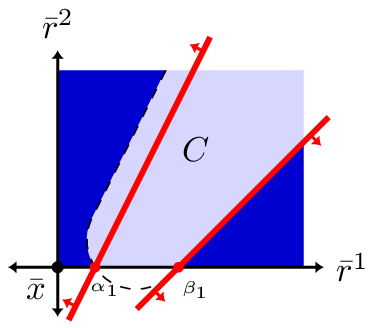}
		\caption{Although a valid two-term disjunction for Example~\ref{ex:useless} exists, the inequalities of Theorem~\ref{thm:n1-n2} are insufficient to obtain it, because Assumption~\ref{ass:1} is not satisfied.}
		\label{fig:n0-insufficient-mod}
	\end{figure}
\end{example}

\section{Valid inequalities and intersection disjunctions using \texorpdfstring{$\recc(\Q)$}{recc(C)}}\label{sec:cuts-2}

In this section, we generalize the results of Section~\ref{sec:cuts-1} by considering the full recession cone of $\Q$. In Section~\ref{subsec:inner}, we construct an inner approximation of $\Q$ and analyze its relationship to $\PB \setminus \Q$. We derive inequalities to define a polyhedral relaxation of $\PB \setminus \Q$ in Section~\ref{subsec:tq-cuts}. In Section~\ref{subsec:multiterm}, we generalize the two-term disjunction of Theorem~\ref{thm:n1-n2} to a multi-term disjunction that uses the recession cone of $\Q$. We propose polyhedral relaxations of these disjunctive terms in Section~\ref{subsec:skq-cuts-combined}.

\subsection{An inner approximation of \texorpdfstring{$\Q$}{C}}\label{subsec:inner}

Let $\Nonetwo \coloneqq \N1 \cup \N2$. We define $\T$ and $\T^\Q$ as follows:
\begin{align}
	\T &\coloneqq \{\xbasis\} + \conv \big(\textstyle\bigcup_{j \in \Nonetwo} \interval{\enter_j}{\exit_j}{j} \big),\enspace \T^\Q \coloneqq \T + \recc(\Q). \label{eq:def-tq}
\end{align}
Both $\T$ and $\T^\Q$ are subsets of $\Q$. Additionally, we define $\Rset$ and $\Rset^\Q$ as follows:
\begin{align*}
	\Rset &\coloneqq \{\xbasis\} + \conv \big( \textstyle\bigcup_{j \in \N1} \interval{\enter_j}{\exit_j}{j} \big),\enspace \Rset^\Q \coloneqq \Rset + \recc(\Q).
\end{align*}
Note that $\Rset^\Q \subseteq \T^\Q$. We derive inequalities valid for $\PB \setminus \T^\Q \supseteq \PB \setminus \Q$. We illustrate the sets $\T^\Q$ and $\Rset^\Q$ graphically in the example that follows.
\begin{example}\label{ex:tq}
	Let $\P = \R^2_{+}$ and $\Q = \{(x_1,x_2) \colon x_2 > \sqrt{(x_1 - 1)^2+1} - 1.1 \}$. Let $\xbasis = (0,0)$ be the basic solution of $\P$, corresponding to basis $B$. The sets $\PB$ and $\Q$ are shown in Figure~\ref{fig:pb-tq-1}. Figures \ref{fig:pb-tq-2} and~\ref{fig:pb-tq-3} show the sets $\T^\Q$ and $\PB \setminus \T^\Q$, respectively. Figure~\ref{fig:pb-rq-1} shows the set $\Rset^\Q$. The set $\PB \setminus \Rset^\Q$ is depicted in Figure~\ref{fig:pb-rq-2}.
	
	\begin{figure}
		\begin{subfigure}[t]{0.3\textwidth}
			\vspace*{0pt}
			\centering
			\includegraphics[width=46mm]{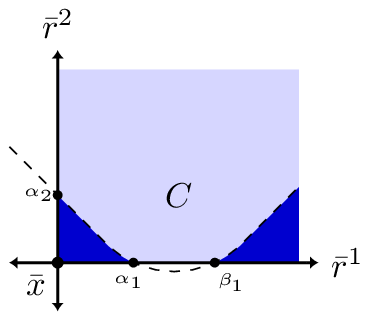}
			\caption{The set $\PB \setminus \Q$ for Example~\ref{ex:tq} is the darkened, disconnected region.}
			\label{fig:pb-tq-1}
		\end{subfigure}%
		\begin{subfigure}[t]{0.05\textwidth}
			\quad
		\end{subfigure}%
		\begin{subfigure}[t]{0.29\textwidth}
			\vspace*{0pt}
			\centering
			\includegraphics[width=46mm]{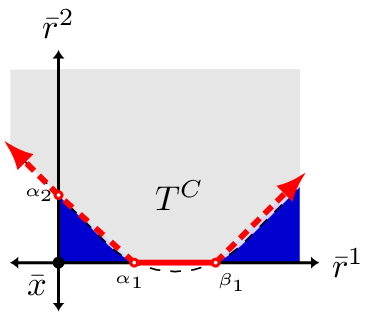}
			\caption{The set $\T^\Q$ is an inner approximation of $\Q$.}
			\label{fig:pb-tq-2}
		\end{subfigure}%
		\begin{subfigure}[t]{0.05\textwidth}
			\quad
		\end{subfigure}%
		\begin{subfigure}[t]{0.3\textwidth}
			\vspace*{0pt}
			\centering
			\includegraphics[width=46mm]{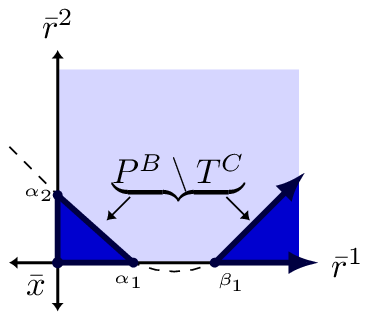}
			\caption{The darkened and disconnected set $\PB \setminus \T^\Q$ is a relaxation of $\PB \setminus \Q$.}
			\label{fig:pb-tq-3}
		\end{subfigure}
		\caption{The construction of $\PB \setminus \T^\Q$ for Example~\ref{ex:tq}. \label{fig:pb-tq}}
	\end{figure}
\end{example}

\begin{figure}
	\begin{subfigure}[t]{0.44\textwidth}
		\vspace*{0pt}
		\centering
		\includegraphics[width=46mm]{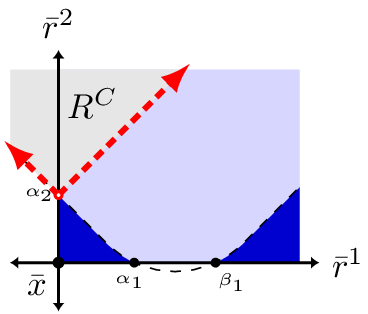}
		\caption{The set $\Rset^\Q$ is an inner approximation of $\Q$ that does not consider points along extreme rays of $\PB$ corresponding to indices in $\N2$.}
		\label{fig:pb-rq-1}
	\end{subfigure}%
	\begin{subfigure}[t]{0.05\textwidth}
		\quad
	\end{subfigure}%
	\begin{subfigure}[t]{0.44\textwidth}
		\vspace*{0pt}
		\centering
		\includegraphics[width=46mm]{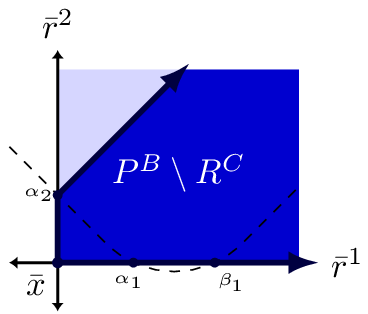}
		\caption{The darkened set $\PB \setminus \Rset^\Q$ is a relaxation of $\PB \setminus \Q$. Its relationship to $\PB \setminus \Q$ and $\PB \setminus \T^\Q$ is established in Theorem~\ref{thm:conv-cq}.}
		\label{fig:pb-rq-2}
	\end{subfigure}
	\caption{The construction of $\Rset^\Q$ for Example~\ref{ex:tq}.}
\end{figure}

We motivate the study of $\PB \setminus \T^\Q$ and $\PB \setminus \Rset^\Q$ by showing that each set retains the strength of $\PB \setminus \Q$ under the convex hull operator.
\begin{theorem}\label{thm:conv-cq}
	It holds that
	\begin{align*}
		\clconv(\PB \setminus \Q) = \clconv(\PB \setminus \T^\Q) = \clconv(\PB \setminus \Rset^\Q).
	\end{align*}
\end{theorem}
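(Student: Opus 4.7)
The plan is to establish the cyclic chain
$\clconv(\PB \setminus \Q) \subseteq \clconv(\PB \setminus \T^\Q) \subseteq \clconv(\PB \setminus \Rset^\Q) \subseteq \clconv(\PB \setminus \Q)$,
with the first two inclusions routine and the last being the substantive content.

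For the easy direction, I would first verify the set-theoretic chain $\Rset^\Q \subseteq \T^\Q \subseteq \Q$ directly from the definitions. $\Rset \subseteq \T$ is immediate because the union in \eqref{eq:def-tq} defining $\Rset$ is indexed by $\N1$, a subset of $\N1 \cup \N2$. Each translated segment $\{\xbasis\} + \interval{\enter_j}{\exit_j}{j}$ lies in $\Q$ by the definitions of $\enter_j$ and $\exit_j$, so the convex hull of the corresponding union remains in the convex set $\Q$, giving $\T \subseteq \Q$. Adding $\recc(\Q)$ preserves containment in $\Q$, yielding $\T^\Q, \Rset^\Q \subseteq \Q$. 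Taking relative complements in $\PB$ reverses the chain to $\PB \setminus \Q \subseteq \PB \setminus \T^\Q \subseteq \PB \setminus \Rset^\Q$, and the closed convex hull preserves this chain.

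For the hard inclusion $\clconv(\PB \setminus \Rset^\Q) \subseteq \clconv(\PB \setminus \Q)$, it suffices to show that every $x \in (\PB \cap \Q) \setminus \Rset^\Q$ lies in $\clconv(\PB \setminus \Q)$. Writing $x = \xbasis + \sum_{j \in N} x_j \rbar^j$ with $x_j \geq 0$, I would exhibit $x$ as a (possibly limiting) convex combination of points known to lie in $\PB \setminus \Q$: the apex $\xbasis$ (outside $\cl(\Q)$ by our standing assumption), the boundary points $\xbasis + \enter_j \rbar^j$ for $j \in \Nonetwo$ and $\xbasis + \exit_j \rbar^j$ for $j \in \N2$ (all on $\bd(\Q) \subseteq \PB \setminus \Q$ since $\Q$ is open), arbitrarily far points $\xbasis + \lambda \rbar^j$ with $\lambda \to \infty$ for $j \in \N0$ (where Assumption~\ref{ass:1} guarantees such points stay in $\PB \setminus \Q$), and, when needed, additional points on $\bd(\Q) \cap \PB$ obtained by moving $\xbasis$ along combined ray directions in $\recc(\Q) \cap \cone\{\rbar^j\}$ so as to absorb cross-ray contributions at vanishing weight cost.

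The main obstacle is verifying that a suitable decomposition exists precisely when $x \notin \Rset^\Q$. The plan is to translate the geometric condition $x \notin \Rset^\Q$ into an explicit algebraic condition on the $x_j$ by unpacking $\Rset^\Q = \Rset + \recc(\Q)$. Using linear independence of the $\rbar^j$'s, one first shows $\Rset = \{\xbasis + \sum_{j \in \N1} \beta_j \rbar^j : \beta_j \geq 0,\ \sum_{j \in \N1} \beta_j / \enter_j > 1\}$, after which membership in $\Rset^\Q$ corresponds to the existence of a recession direction $r \in \recc(\Q)$ making $x - r$ satisfy this strict inequality. Because $\rbar^j \in \recc(\Q)$ holds for $j \in \N1$ by definition and for $j \in \N0$ by Assumption~\ref{ass:1}, but generally fails for $j \in \N2$, the $\N2$-contributions cannot simply be absorbed into $r$ and must instead be accounted for via convex combinations mixing the endpoints $\xbasis + \enter_j \rbar^j$ and $\xbasis + \exit_j \rbar^j$. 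I expect threading this correspondence between the algebraic description of $\Rset^\Q$ and the feasibility of candidate convex combinations to be the most delicate step; once it is in hand, all three closed convex hulls coincide.
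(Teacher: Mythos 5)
Your outline of the easy inclusions and the reduction of the hard direction to showing $(\PB \cap \Q) \setminus \Rset^\Q \subseteq \clconv(\PB \setminus \Q)$ matches the paper, but there are two genuine gaps. First, Assumption~\ref{ass:1} is a standing assumption of Section~\ref{sec:cuts-1} only; Theorem~\ref{thm:conv-cq} lives in Section~\ref{sec:cuts-2} and is stated and proved without it. You lean on $\rbar^j \in \recc(\Q)$ for $j \in \N0$ in two places --- to certify the far points $\xbasis + \lambda \rbar^j$ and to absorb the $\N0$-contributions into the recession direction $r$ --- so as written your argument only establishes a weaker statement. The first use is also unnecessary: for $j \in \N0$ the entire halfline $\{\xbasis\} + \lcinterval{0}{+\infty}{j}$ misses $\Q$ by definition of $\N0$, which is all the paper needs.

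Second, and more seriously, the hardest case is the one your sketch does not reach. When $\eta \coloneqq \sum_{j \in \N1} y_j / \enter_j > 1$, the $\N1$-component of $y$ cannot be written as a convex combination of points $\xbasis + \lambda_j \rbar^j$ with $\lambda_j \leq \enter_j$ (the required weights $y_j/\enter_j$ already sum to more than one), so none of your listed building blocks on the individual edges of $\PB$ suffice, and your fallback (``points on $\bd(\Q) \cap \PB$ along combined ray directions \ldots at vanishing weight cost'') does not identify the mechanism. The paper's resolution is not a full algebraic characterization of $\Rset^\Q$ --- which is unlikely to exist in usable form for a general convex cone $\recc(\Q)$ --- but a single consequence of $y \notin \Rset^\Q$: for a family of points $z^\epsilon \in \Rset$ built from the $\N1$-part of $y$, the difference $y - z^\epsilon$ lies in $\recc(\PB) \setminus \recc(\Q)$, hence $y + \gamma_\epsilon (y - z^\epsilon) \in \PB \setminus \Q$ for $\gamma_\epsilon$ large, and $y$ is recovered as a limit of convex combinations of these far points with a fixed point of $\conv(\{\xbasis + \enter_j \rbar^j \colon j \in \N1\})$. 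One also needs the boundary case $\eta = 1$, handled by perturbing toward $\eta < 1$ and taking limits. Without the $\eta > 1$ construction the proof is incomplete.
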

\begin{proof}
	Observe $\{\xbasis\} + \bigcup_{j \in \N1} \interval{\enter_j}{\exit_j}{j} \subseteq \{\xbasis\} + \bigcup_{j \in \Nonetwo} \interval{\enter_j}{\exit_j}{j} \subseteq \Q$. By definition, $\Rset^\Q \subseteq \T^\Q \subseteq \Q$, which implies
	\begin{align*}
		\PB \setminus \Q \subseteq \PB \setminus \T^\Q \subseteq \PB \setminus \Rset^\Q.
	\end{align*}
	To complete the proof, we show $\PB \setminus \Rset^\Q \subseteq \clconv(\PB \setminus \Q)$. Let $y \in \PB \setminus \Rset^\Q$. Assume $y \in \Q$, or we have nothing to prove. Because $y \in \PB$, we have $y = \xbasis + \sum_{j \in N} y_j \rbar^j$, where $y_j \geq 0$ for all $j \in N$. Let $\eta \coloneqq \sum_{j \in \N1} y_j / \enter_j$.
	
	To begin, assume $\eta < 1$. Assume also that $\sum_{j \in \N0 \cup \N2} y_j > 0$, otherwise $y$ is a convex combination of the points $\{\xbasis\} \cup \{\xbasis + \enter_j \rbar^j \colon j \in \N1\} \subseteq \PB \setminus \Q$:
	\begin{align*}
		y &= (1 - \eta)\xbasis + \sum_{j \in \N1} \frac{y_j}{\enter_j} (\xbasis + \enter_j \rbar^j).
	\end{align*}
	Let $\lambday \coloneqq \sum_{j \in \N0 \cup \N2} y_j / (1 - \eta)$. We rewrite $y$ as
	\begin{align*}
		y &= \sum_{j \in \N1} \frac{y_j}{\enter_j} (\xbasis + \enter_j \rbar^j) + \mash{\sum_{j \in \N0 \cup \N2}} \ \frac{y_j}{\lambday} (\xbasis + \lambday \rbar^j).
	\end{align*}
	We have $\xbasis + \enter_j \rbar^j \in \PB \setminus \Q$ for all $j \in \N1$. Additionally, $\xbasis + \lambday \rbar^j \in \PB \setminus \Q$ for all $j \in \N0$. For $j \in \N2$, $\xbasis + \lambday \rbar^j \in \conv(\PB \setminus \Q)$, because $\xbasis \in \PB \setminus \Q$ and $\xbasis + \delta \rbar^j \in \PB \setminus \Q$ for a sufficiently large $\delta > \lambday$. The coefficients on the vectors $\{\xbasis + \enter_j \rbar^j \colon j \in \N1\} \cup \{\xbasis + \lambday \rbar^j \colon j \in \N0 \cup \N2\}$ are nonnegative and sum to one. Then $y \in \conv (\PB \setminus \Q)$.
	
	Next, assume $\eta = 1$. Because $y_j \geq 0$ for all $j \in N$, we have $\sum_{j \in \N1} y_j > 0$. Then there exists $k \in \N1$ satisfying $y_k > 0$. Let $y^{\epsilon} \coloneqq y - \epsilon \rbar^k$. For all $\epsilon \in (0, y_k]$, it holds that $y^\epsilon \in \PB$. Furthermore, $\sum_{j \in \N1} y^\epsilon_j / \enter_j = \eta - \epsilon / \enter_k < 1$. It follows from the above analysis that $y^\epsilon \in \conv(\PB \setminus \Q)$. Consequently, $y = \lim_{\epsilon \rightarrow 0} y^\epsilon \in \clconv(\PB \setminus \Q)$.
	
	Finally, assume $\eta > 1$. For $\epsilon \in [0,1)$, let $z^\epsilon$ be the following:
	\begin{align*}
		z^\epsilon &\coloneqq \xbasis + \sum_{j \in \N1} \frac{y_j}{\lambdaz} [(1 - \epsilon)\lambdaz + \epsilon] \rbar^j
	\end{align*}
	Consider a fixed $\epsilon \in [0,1)$. It holds that $z^\epsilon \in \Rset$, because
	\begin{align*}
		z^\epsilon =  \xbasis + \sum_{j \in \N1} \frac{y_j / \enter_j}{\lambdaz} [(1 - \epsilon)\lambdaz + \epsilon] \enter_j \rbar^j \in \{\xbasis\} + \conv \big( \textstyle\bigcup_{j \in \N1} \interval{\enter_j}{\exit_j}{j} \big).
	\end{align*}
	It must be the case that $y - z^\epsilon \notin \recc(\Q)$. If not, we have $y = z^\epsilon + q$ for some $q \in \recc(\Q)$, implying $y \in \Rset^\Q$ and contradicting $y \in \PB \setminus \Rset^\Q$.
	
	It holds that $y - z^\epsilon \in \recc(\PB)$, because the coefficients on the terms $\{\rbar^j \colon j \in N\}$ are nonnegative:
	\begin{align*}
		y - z^\epsilon &= \mash{\sum\limits_{j \in \N0 \cup \N2}} y_j \rbar^j  + \sum\limits_{j \in \N1} \frac{y_j}{\lambdaz}(\lambdaz - 1) \epsilon \rbar^j.
	\end{align*}
	Then we have $y - z^\epsilon \in \recc(\PB) \setminus \recc(\Q)$. For a sufficiently large $\gamma_{\epsilon} > 0$, $y + \gamma_{\epsilon}(y - z^\epsilon) \notin \Q$, because $y - z^\epsilon \notin \recc(\Q)$. Because $y - z^\epsilon \in \recc(\PB)$, it follows that $y + \gamma_{\epsilon}(y - z^\epsilon) \in \PB \setminus \Q$. Let $\zhat \in \conv(\PB \setminus \Q)$ be defined as follows:
	\begin{align*}
		\zhat &\coloneqq \xbasis + \sum_{j \in \N1} \frac{y_j}{\lambdaz} \rbar^j = \sum_{j \in \N1} \frac{y_j / \enter_j}{\lambdaz} (\xbasis + \enter_j \rbar^j) \in \conv(\{ \xbasis + \enter_j\rbar^j \colon j \in \N1 \})
	\end{align*}
	For any $\epsilon \in [0,1)$, let $\vv^{\epsilon}$ be the following convex combination of $\zhat$ and $y + \gamma_{\epsilon}(y - z^\epsilon)$:
	\begin{align}
		\vv^{\epsilon} \coloneqq \frac{\gamma_{\epsilon}}{\gamma_{\epsilon} + 1} \zhat + \frac{1}{\gamma_{\epsilon} + 1} \left( y + \gamma_{\epsilon}(y - z^\epsilon) \right) \in \conv(\PB \setminus \Q). \label{eq:zhat-def}
	\end{align}
	We have $\lim_{\epsilon \rightarrow 1} z^\epsilon = \zhat$, and $\gamma_{\epsilon} / (\gamma_{\epsilon} + 1) \in [0,1)$ for all $\gamma_{\epsilon} > 0$. Then
	\begin{align*}
		\lim\limits_{\epsilon \rightarrow 1} \bigg\lvert \bigg\lvert \frac{\gamma_{\epsilon}}{\gamma_{\epsilon} + 1}(\zhat - z^\epsilon) \bigg\rvert \bigg\rvert \leq \lim\limits_{\epsilon \rightarrow 1} \bigg\lvert \frac{\gamma_{\epsilon}}{\gamma_{\epsilon} + 1} \bigg\rvert \lvert\lvert \zhat - z^\epsilon \rvert\rvert &= 0.
	\end{align*}
	Rearranging the definition of $\vv^\epsilon$ from \eqref{eq:zhat-def}, we have
	$y = \vv^\epsilon + [\gamma_{\epsilon} / (\gamma_{\epsilon} + 1)](z^\epsilon - \zhat)$.
	Thus,
	\begin{align*}
		y &= \lim\limits_{\epsilon \rightarrow 1} \vv^{\epsilon} \in \clconv(\PB \setminus \Q). \tag*{\qedhere}
	\end{align*}
	\vspace*{-\belowdisplayskip}
\end{proof}
Theorem~\ref{thm:conv-cq} supports our selection of $\PB \setminus \T^\Q$ as a relaxation of $\PB \setminus \Q$, as we do not lose anything when considering $\clconv (\PB \setminus \T^\Q)$. For the remainder of Section~\ref{sec:cuts-2}, we make the following assumption.
\begin{assumption}\label{ass:recc}
	The recession cone of $\Q$ is contained in the recession cone of $\PB$.
\end{assumption}

If Assumption~\ref{ass:recc} does not hold, we can consider the convex set $\PB \cap \Q$ instead of $\Q$. Indeed, $\recc(\PB \cap \Q) \subseteq \recc(\PB)$. Our analysis only requires the set $\Q$ to be relatively open in $\PB$, not necessarily open. By Corollary~\ref{cor:replace}, replacing $\Q$ with $\PB \cap \Q$ does not change the strength of our relaxation of $\PB \setminus \Q$ with respect to the convex hull operator.
\begin{corollary}\label{cor:replace}
	It holds that $\clconv(\PB \setminus \Q) = \clconv(\PB \setminus \T^{\PB \cap \Q})$.
\end{corollary}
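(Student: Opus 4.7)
The plan is to reduce the corollary to Theorem~\ref{thm:conv-cq} applied with $\PB \cap \Q$ playing the role of $\Q$. First I would record the elementary identity $\PB \setminus \Q = \PB \setminus (\PB \cap \Q)$, which restates the corollary as the claim that $\clconv(\PB \setminus (\PB \cap \Q)) = \clconv(\PB \setminus \T^{\PB \cap \Q})$.

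Next I would verify that the hypotheses of Theorem~\ref{thm:conv-cq} are met by $\PB \cap \Q$. The set $\PB \cap \Q$ is convex, and its recession cone equals $\recc(\PB) \cap \recc(\Q)$, so it is automatically contained in $\recc(\PB)$; hence Assumption~\ref{ass:recc} holds with $\PB \cap \Q$ in place of $\Q$. Because every halfline $\{\xbasis\} + \lcinterval{0}{+\infty}{j}$ is contained in $\PB$, the parameters $\enter_j$ and $\exit_j$ computed relative to $\PB \cap \Q$ coincide with those computed relative to $\Q$. Consequently the index partition $\N0, \N1, \N2$ and the inner approximation $\T^{\PB \cap \Q}$ produced by the construction in \eqref{eq:def-tq} are the same regardless of which of the two sets we use.

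The main obstacle is that $\PB \cap \Q$ need not be open in $\R^n$, only relatively open in $\PB$, whereas Theorem~\ref{thm:conv-cq} is stated for an open convex set. The remark preceding this corollary asserts that relative openness in $\PB$ is enough, and I would justify this by revisiting the proof of Theorem~\ref{thm:conv-cq}: openness of $\Q$ is invoked only to guarantee that interior points of the segments $\{\xbasis\} + \interval{\enter_j}{\exit_j}{j}$ lie in $\Q$ and that $\xbasis + \delta \rbar^j \notin \Q$ for sufficiently large $\delta$ when $j \in \N2$. Since every point manipulated in that proof already lies in $\PB$, both conditions remain valid when $\Q$ is replaced by $\PB \cap \Q$, and the argument transfers verbatim. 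The resulting extension of Theorem~\ref{thm:conv-cq} yields $\clconv(\PB \setminus (\PB \cap \Q)) = \clconv(\PB \setminus \T^{\PB \cap \Q})$, which combined with the identity from the first paragraph completes the proof.
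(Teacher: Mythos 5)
Your proposal is correct and follows the same route as the paper: the paper's proof is exactly the chain $\clconv(\PB \setminus \Q) = \clconv(\PB \setminus (\PB \cap \Q)) = \clconv(\PB \setminus \T^{\PB \cap \Q})$, invoking Theorem~\ref{thm:conv-cq} with $\PB \cap \Q$ in place of $\Q$. Your additional checks (that $\enter_j,\exit_j$ and hence $\T^{\PB\cap\Q}$ are unchanged, and that relative openness in $\PB$ suffices for the theorem's argument) simply make explicit what the paper asserts in the surrounding discussion.
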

\begin{proof}
	The statement follows directly from Theorem~\ref{thm:conv-cq}:
	\begin{align*}
		\clconv(\PB \setminus \Q) &= \clconv(\PB \setminus (\PB \cap \Q)) = \clconv(\PB \setminus \T^{\PB \cap \Q}). \tag*{\qedhere}
	\end{align*}
\end{proof}
If $\PB$ in Corollary~\ref{cor:replace} is replaced with $\P$, the statement is no longer true in general. This is relevant, because we present a valid disjunction for $\PB \setminus \T^\Q$ in Section~\ref{subsec:multiterm}. When we add the constraints of $\P$ to the disjunctive formulation of $\PB \setminus \T^\Q$, the cuts obtained from a CGLP could be stronger than those obtained from a CGLP built from a valid disjunction for $\PB \setminus \T^{\PB \cap \Q}$. Thus, substituting $\Q$ with $\PB \cap \Q$ in order to satisfy Assumption~\ref{ass:recc} has the potential to weaken the generated disjunctive cuts.

\subsection{Polyhedral relaxation of \texorpdfstring{$\PB \setminus \T^\Q$}{P\carrot B \back T\carrot C}}\label{subsec:tq-cuts}

We present valid inequalities for $\PB \setminus \genset_{\D}^{\Q}$, where $\D \subseteq \Nonetwo$ is fixed and
\begin{align}
	\genset_{\D} &\coloneqq \{\xbasis\} + \conv \big(\textstyle\bigcup_{j \in \D} \interval{\enter_j}{+\infty}{j} \big),\enspace \genset_{\D}^{\Q} \coloneqq \genset_{\D} + \recc(\Q). \label{eq:genset-def}
\end{align}
In this section, we are interested in deriving valid inequalities for $\PB \setminus \genset_{\D}^{\Q}$ when $\D = \N1$, in which case $\genset_{\D}^{\Q} = \Rset^\Q$. Because $\Rset^\Q \subseteq \T^\Q$, inequalities valid for $\PB \setminus \Rset^\Q$ are also valid for $\PB \setminus \T^\Q$. We consider the more general set $\genset_{\D}^{\Q}$ to be able to apply this analysis in Section~\ref{subsec:skq-cuts-combined}. Observe that $\recc(\genset_{\D}^{\Q}) = \cone(\{ \rbar^j \colon j \in \D\}) + \recc(\Q)$. Furthermore, $\genset_{\D}^{\Q} \subseteq \Q$ if and only if $\D \subseteq \N1$.

For $(i,j) \in \D \times (N \setminus \D)$, let $\epshatone_{ij}$ be the following:
\begin{align*}
	\epshatone_{ij} &\coloneqq \sup \{ \epshatone \geq 0 \colon \enter_i \rbar^i + \epshatone \rbar^j \in \recc(\genset_{\D}^{\Q}) \}.
\end{align*}
It holds that $\epshatone_{ij} = +\infty$ if $\rbar^j \in \recc(\genset_{\D}^{\Q})$. In all other cases, $\epshatone_{ij}$ is finite and its supremum is attained, because $\recc(\genset_{\D}^{\Q})$ is a closed convex cone. The parameter $\epshatone_{ij}$ depends on $\D$, but we suppress this dependence for notational simplicity.

Let $\Msetone$ be defined as follows:
\begin{align*}
	\Msetone &\coloneqq \{i \in \D \colon \epshatone_{ij} > 0\ \forall j \in N \setminus \D\}.
\end{align*}
Let $\F_{ij} \coloneqq \cone(\rbar^i,\rbar^j)$ be the cone formed by extreme rays $\rbar^i$ and $\rbar^j$ ($i,j \in N$). The set $\Msetone$ is composed of indices $i \in \D$ corresponding to extreme rays of $\PB$ that exhibit the following property: for every $j \in N \setminus \D$, the cone $\F_{ij}$ contains a nontrivial element of $\recc(\genset_{\D}^{\Q})$ (i.e., anything outside of $\lcinterval{0}{+\infty}{i}$).

\begin{proposition}\label{prop:epshat-recc}
	Let $(i,j) \in \Msetone \times (N \setminus \D)$. For any $\epshatone \in [0, \epshatone_{ij})$, $\enter_i \rbar^i + \epshatone \rbar^j \in \recc(\genset_{\D}^\Q)$.
\end{proposition}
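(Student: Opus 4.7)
The plan is to argue directly from the definition of $\epshatone_{ij}$, using that $\recc(\genset_{\D}^{\Q})$ is a closed convex cone. First, I would note that $\rbar^i \in \recc(\genset_{\D}^{\Q})$ because $i \in \D$ implies $\lcinterval{0}{+\infty}{i} \subseteq \cone(\{\rbar^k : k \in \D\}) \subseteq \recc(\genset_{\D}^{\Q})$, so in particular $\enter_i \rbar^i \in \recc(\genset_{\D}^{\Q})$ (this handles $\epshatone = 0$).

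Next I would split into two cases based on the value of $\epshatone_{ij}$. If $\epshatone_{ij} = +\infty$, then by the definition of $\epshatone_{ij}$ and the closedness of $\recc(\genset_{\D}^{\Q})$ under positive scaling, it follows that $\rbar^j \in \recc(\genset_{\D}^{\Q})$, so $\enter_i \rbar^i + \epshatone \rbar^j$ lies in $\recc(\genset_{\D}^{\Q})$ for every $\epshatone \geq 0$ by convexity (indeed, conic convexity) of the recession cone. If instead $\epshatone_{ij} \in (0, +\infty)$, then because $\recc(\genset_{\D}^{\Q})$ is closed the supremum in the definition of $\epshatone_{ij}$ is attained, so $\enter_i \rbar^i + \epshatone_{ij} \rbar^j \in \recc(\genset_{\D}^{\Q})$.

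Finally, for any $\epshatone \in [0, \epshatone_{ij})$ in the finite case, I would write $\enter_i \rbar^i + \epshatone \rbar^j$ as the convex combination
\begin{align*}
\enter_i \rbar^i + \epshatone \rbar^j &= \Bigl(1 - \tfrac{\epshatone}{\epshatone_{ij}}\Bigr) \bigl(\enter_i \rbar^i \bigr) + \tfrac{\epshatone}{\epshatone_{ij}} \bigl(\enter_i \rbar^i + \epshatone_{ij} \rbar^j \bigr),
\end{align*}
which lies in $\recc(\genset_{\D}^{\Q})$ by convexity, since both endpoints are in $\recc(\genset_{\D}^{\Q})$.

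There is essentially no obstacle here — the statement is a near-tautological consequence of the definition of $\epshatone_{ij}$ together with the closedness and convexity of $\recc(\genset_{\D}^{\Q})$. The only mild subtlety is ensuring that the supremum is attained in the finite case, which is why the closedness of $\recc(\genset_{\D}^{\Q})$ (noted in the paragraph immediately preceding the proposition) needs to be invoked explicitly.
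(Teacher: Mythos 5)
Your proof is correct and follows essentially the same route as the paper's: a case split on whether $\epshatone_{ij}$ is infinite (in which case $\rbar^j$ itself lies in $\recc(\genset_{\D}^{\Q})$ and the conclusion follows from conic convexity), and otherwise expressing $\enter_i \rbar^i + \epshatone \rbar^j$ as a convex combination of $\enter_i \rbar^i$ and $\enter_i \rbar^i + \epshatone_{ij} \rbar^j$, with attainment of the supremum guaranteed by closedness of the recession cone. No substantive differences.
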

\begin{proof}
	If $\gamma_{ij} = +\infty$, then $\rbar^j \in \recc(\genset_{\D}^{\Q})$, and the point $\enter_i \rbar^i + \gamma \rbar^j$ lies in $\F_{ij} \subseteq \recc(\genset_{\D}^{\Q})$. Assume $\gamma_{ij} < +\infty$. The point $\enter_i \rbar^i + \epshatone \rbar^j$ is a convex combination of $\enter_i \rbar^i + \epshatone_{ij} \rbar^j$ and $\enter_i \rbar^i$, both of which lie in $\recc(\genset_{\D}^{\Q})$:
	\begin{align*}
		\enter_i \rbar^i + \epshatone \rbar^j&= \frac{\epshatone}{\epshatone_{ij}} ( \enter_i \rbar^i + \epshatone_{ij} \rbar^j ) + \bigg(1 - \frac{\epshatone}{\epshatone_{ij}} \bigg) \enter_i \rbar^i \in \recc(\genset_{\D}^{\Q}). \tag*{\qedhere}
	\end{align*}
\end{proof}

For $j \in N \setminus \D$ and $\Sset \subseteq \Msetone$, we define $\epsone_j(\Sset)$ to be
\begin{align*}
	\epsone_j(\Sset) &=
	\begin{cases}
		\min_{i \in \Sset} \epshatone_{ij} & \textrm{ if } \Sset \neq \emptyset \\
		+\infty & \textrm{ otherwise}.
	\end{cases}
\end{align*}
The parameter $\epsone_j(\Sset)$ also depends on $\D$. We again omit this dependence for notational simplicity.

Theorem~\ref{thm:tq-cuts} presents a family of valid inequalities for $\PB \setminus \genset_{\D}^{\Q}$.
\begin{theorem}\label{thm:tq-cuts}
	Let $\Sset \subseteq \Msetone$. The inequality
	\begin{align}
		\sum\limits_{j \in \Sset} \frac{x_j}{\enter_j} - \mash{\sum\limits_{j \in N \setminus \D}} \ \frac{x_j}{\epsone_j(\Sset)} &\leq 1 \label{eq:tq-inequality}
	\end{align}
	is valid for $\PB \setminus \genset_{\D}^{\Q}$.
\end{theorem}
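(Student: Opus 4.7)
My plan is to argue by contraposition: I would assume that some $\xhat \in \PB$ violates \eqref{eq:tq-inequality}, meaning $u \coloneqq \sum_{i \in \Sset}\xhat_i/\enter_i - \sum_{j \in N \setminus \D} \xhat_j / \epsone_j(\Sset) > 1$, and show $\xhat \in \genset_{\D}^{\Q}$, which establishes the contrapositive. The trivial case $\Sset = \emptyset$ reduces \eqref{eq:tq-inequality} to $0 \leq 1$, so I assume $\Sset \neq \emptyset$ and write $v \coloneqq \sum_{i \in \Sset}\xhat_i/\enter_i$; then $v \geq u > 1$.

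The central idea is to split each basic-direction coefficient as $\xhat_i = (\xhat_i - b_i) + b_i$ for $i \in \Sset$, so that the leftover parts $(\xhat_i - b_i)\rbar^i$ produce (via a convex combination) an element of $\genset_{\D}$, while the consumed parts $b_i \rbar^i$ pair with the outside rays $\rbar^j$, $j \in N \setminus \D$, to produce an element of $\recc(\genset_{\D}^{\Q})$. Concretely, I would distribute $\xhat_j$ among the $i \in \Sset$ proportionally via $\gamma_{ij} \coloneqq \xhat_j(\xhat_i/\enter_i)/v$ (so that $\sum_{i \in \Sset} \gamma_{ij} = \xhat_j$) and set $b_i \coloneqq \enter_i \sum_{j \in N \setminus \D}\gamma_{ij}/\epsone_j(\Sset)$. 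A direct computation gives $b_i = (\xhat_i/v) \sum_j \xhat_j/\epsone_j(\Sset)$, hence $\xhat_i - b_i = \xhat_i \cdot u/v \geq 0$, which is the key nonnegativity needed below.

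To verify the decomposition, I would use conicity of $\recc(\genset_{\D}^{\Q})$ together with Proposition~\ref{prop:epshat-recc} and closedness of that cone: since $\epsone_j(\Sset) \leq \epshatone_{ij}$ and $\rbar^i \in \recc(\genset_{\D}^{\Q})$ for $i \in \D$, each $\enter_i \rbar^i + \epsone_j(\Sset)\rbar^j$ lies in $\recc(\genset_{\D}^{\Q})$ (being a convex combination of $\enter_i \rbar^i + \epshatone_{ij}\rbar^j$ and $\enter_i \rbar^i$). Taking the conic combination of these points with coefficients $\gamma_{ij}/\epsone_j(\Sset)$ yields $\sum_{i \in \Sset} b_i \rbar^i + \sum_{j \in N \setminus \D} \xhat_j \rbar^j \in \recc(\genset_{\D}^{\Q})$, and trivially $\sum_{j \in \D \setminus \Sset} \xhat_j \rbar^j \in \cone(\{\rbar^j \colon j \in \D\}) \subseteq \recc(\genset_{\D}^{\Q})$. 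The remaining sum $\sum_i (\xhat_i - b_i)\rbar^i$ I would rewrite as $\sum_i \tilde\theta_i (u\enter_i)\rbar^i$ with weights $\tilde\theta_i \coloneqq (\xhat_i - b_i)/(u\enter_i) \geq 0$ summing to one, which exhibits it as a convex combination of points $u\enter_i \rbar^i \in \interval{\enter_i}{+\infty}{i}$ (using $u > 1$). Assembling the three pieces and absorbing $\cone(\{\rbar^i \colon i \in \D\})$ into the recession cone of $\conv(\bigcup_{i \in \D}\interval{\enter_i}{+\infty}{i})$ gives $\xhat \in \{\xbasis\} + \conv(\bigcup_{i \in \D}\interval{\enter_i}{+\infty}{i}) + \recc(\Q) = \genset_{\D}^{\Q}$.

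The main obstacle I anticipate is establishing $\xhat_i - b_i \geq 0$ simultaneously for every $i \in \Sset$. The proportional split $\gamma_{ij} \propto \xhat_i/\enter_i$ is essential, and it works precisely because $\epsone_j(\Sset)$ does not depend on $i$: this reduces the distribution of outside-ray mass to a single aggregate capacity check rather than a genuine transportation problem. This is also why \eqref{eq:tq-inequality} must employ the worst-case rate $\min_{i \in \Sset}\epshatone_{ij}$; replacing it by a per-$i$ rate would break the pooling argument.
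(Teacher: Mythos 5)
Your proposal is correct, and its overall architecture is the same as the paper's: both proofs take a violating $\xhat \in \PB$, pair a ``consumed'' portion $b_i \rbar^i$ of each ray $i \in \Sset$ with the outside rays $\rbar^j$, $j \in N \setminus \D$, to land in $\recc(\genset_{\D}^{\Q})$ via Proposition~\ref{prop:epshat-recc}, and exhibit the leftover $\sum_i (\xhat_i - b_i)\rbar^i$ as a convex combination of points on $\{\xbasis\} + \interval{\enter_i}{+\infty}{i}$. The one genuine difference is how the distribution weights are obtained: the paper invokes Lemma~\ref{lem:farkas-cons-2}, proved by a Farkas alternative argument, to assert the \emph{existence} of multipliers $\theta_{ij}$ with $\sum_{i} \theta_{ij} = 1$ and $\sum_j \cparam_j \theta_{ij} \leq \aparam_i$, whereas you write them down explicitly as $\theta_{ij} = (\xhat_i/\enter_i)/v$. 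Your choice is in fact a feasible solution to the paper's Farkas system (since $\sum_j \cparam_j \cdot \aparam_i / \sum_{i'}\aparam_{i'} \leq \aparam_i$ follows directly from $\sum_{i'} \aparam_{i'} > \sum_j \cparam_j$), so your argument amounts to a one-line constructive proof of the special case of Lemma~\ref{lem:farkas-cons-2} that this theorem needs; that is a genuine, if modest, simplification. Your closing observation about why the pooled rate $\min_{i\in\Sset}\epshatone_{ij}$ is what makes the proportional split work is also accurate. One loose end to tidy: for indices $j \in N\setminus\D$ with $\epsone_j(\Sset) = +\infty$ (the paper's set $\helperc$, where $\rbar^j \in \recc(\genset_{\D}^{\Q})$), the conic combination of the points $\enter_i\rbar^i + \epsone_j(\Sset)\rbar^j$ with coefficients $\gamma_{ij}/\epsone_j(\Sset) = 0$ does not literally produce the term $\xhat_j\rbar^j$, so those terms must be added separately to the recession part, exactly as the paper does by splitting $N\setminus\D$ into $\helper$ and $\helperc$. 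This is a bookkeeping fix, not a gap in the idea.
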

Prior to proving Theorem~\ref{thm:tq-cuts}, we use Farkas' lemma to derive a result on the existence of a solution to a particular family of linear equations.
\begin{lemma}\label{lem:farkas-cons-2}
	Let $\indone, \indtwo$ be two finite index sets. Let $\aparam \in \R^{|\indone|}_{+}$ and $\cparam \in \R^{|\indtwo|}_{+}$ satisfy $\sum_{i \in \indone} \aparam_i - \sum_{j \in \indtwo} \cparam_j > 0$. Then there exists $\theta \in \R^{|\indone| \times |\indtwo|}_{+}$ such that
	\begin{equation}\label{eq:fark-sys-2}
	\begin{alignedat}{2}
	\sum\limits_{i \in \indone} \theta_{ij} &= 1  &&\forall j \in \indtwo \\
	\sum\limits_{j \in \indtwo} \cparam_j \theta_{ij} &\leq \aparam_i \qquad &&\forall i \in \indone.
	\end{alignedat}
	\end{equation}
\end{lemma}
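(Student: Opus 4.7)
My first instinct is to bypass Farkas' lemma entirely and exhibit an explicit feasible $\theta$. Because $\cparam \geq 0$, the hypothesis $\sum_{i \in \indone} \aparam_i > \sum_{j \in \indtwo} \cparam_j$ forces $\sum_{k \in \indone} \aparam_k > 0$, so the uniform-in-$j$ assignment
\[
\theta_{ij} \coloneqq \frac{\aparam_i}{\sum_{k \in \indone} \aparam_k} \quad \forall\, (i,j) \in \indone \times \indtwo
\]
is well-defined and nonnegative. For each $j \in \indtwo$, the first constraint of~\eqref{eq:fark-sys-2} is immediate, since $\sum_{i \in \indone} \theta_{ij} = (\sum_i \aparam_i)/(\sum_k \aparam_k) = 1$. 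For each $i \in \indone$, the second constraint reduces to $(\aparam_i / \sum_k \aparam_k) \sum_j \cparam_j \leq \aparam_i$, which follows from $\sum_j \cparam_j \leq \sum_k \aparam_k$ (a direct consequence of the hypothesis) together with $\aparam_i \geq 0$. This settles the lemma without any duality argument.

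If a Farkas-style proof is preferred (as the name of the lemma hints), I would instead rewrite the system in equality form by introducing slacks $s_i \geq 0$ for the inequalities and apply the theorem of alternatives. An infeasibility certificate would then consist of multipliers $y \in \R^{|\indtwo|}$ for the equality constraints and $z \in \R^{|\indone|}_{+}$ for the slacked inequalities, satisfying $y_j + \cparam_j z_i \geq 0$ for every $(i,j) \in \indone \times \indtwo$ together with $\sum_{j \in \indtwo} y_j + \sum_{i \in \indone} \aparam_i z_i < 0$. Setting $z^{\star} \coloneqq \min_{i \in \indone} z_i \geq 0$, the first family of inequalities forces $y_j \geq -\cparam_j z^{\star}$ for every $j \in \indtwo$, and summing against the dual objective (using $z_i \geq z^{\star}$ and $\aparam_i \geq 0$) yields
\[
\sum_{j \in \indtwo} y_j + \sum_{i \in \indone} \aparam_i z_i \;\geq\; z^{\star} \bigg( \sum_{i \in \indone} \aparam_i - \sum_{j \in \indtwo} \cparam_j \bigg) \;\geq\; 0,
\]
contradicting the strict negativity in the certificate and thereby establishing feasibility of~\eqref{eq:fark-sys-2}.

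The main obstacle in either route is purely bookkeeping. In the constructive route I need only observe that the hypothesis rules out $\sum_k \aparam_k = 0$. In the Farkas route I must pin down the correct sign conventions (free multipliers for the equalities, nonnegative multipliers for the slacked inequalities) and recognize that replacing each $z_i$ by $z^{\star}$ is precisely the move that converts the pairwise bounds $y_j \geq -\cparam_j z_i$ into a single uniform bound which, when summed, pairs cleanly against $\sum_j \cparam_j$ and produces the desired contradiction.
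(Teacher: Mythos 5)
Your proposal is correct, and your primary argument is genuinely different from --- and simpler than --- the paper's. The paper proves the lemma non-constructively: it assumes the system \eqref{eq:fark-sys-2} is infeasible, invokes Farkas' lemma to obtain a certificate $(y,z)$ with $y_j + \cparam_j z_i \geq 0$ and $\sum_j y_j + \sum_i \aparam_i z_i < 0$, and derives a contradiction by scaling the first family by $\aparam_i/\bhat$ (where $\bhat = \sum_j \cparam_j$, with the case $\bhat = 0$ handled separately) and summing. Your explicit assignment $\theta_{ij} = \aparam_i / \sum_k \aparam_k$ bypasses all of this: the hypothesis together with $\cparam \geq 0$ guarantees $\sum_k \aparam_k > \sum_j \cparam_j \geq 0$, so the column sums equal $1$ and the row constraints reduce to $\aparam_i \bigl(\sum_j \cparam_j / \sum_k \aparam_k - 1\bigr) \leq 0$, which holds since the parenthesized factor is negative and $\aparam_i \geq 0$. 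This constructive witness fully suffices for how the lemma is invoked in Theorems~\ref{thm:tq-cuts} and~\ref{thm:skq-cuts}, where only existence of a feasible $\theta$ is needed; what the paper's duality route buys is essentially nothing beyond matching the style of Lemma~\ref{lem:farkas-cons-1}. Your alternative Farkas argument is also valid and lands on the same certificate as the paper's, differing only in the algebraic step that produces the contradiction (you lower-bound $y_j \geq -\cparam_j z^{\star}$ with $z^{\star} = \min_i z_i$, which is well-defined since the hypothesis forces $\indone \neq \emptyset$, rather than rescaling by $\aparam_i/\bhat$); both yield $\sum_j y_j + \sum_i \aparam_i z_i \geq 0$, contradicting the certificate.
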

\begin{proof}
	Let $\bhat \coloneqq \sum_{j \in \indtwo} \cparam_j$. If $\bhat = 0$, then any $\theta \in \R^{|\indone| \times |\indtwo|}_{+}$ satisfying $\sum_{i \in \indone} \theta_{ij} = 1$ for all $j \in \indtwo$ is a solution to system \eqref{eq:fark-sys-2}. Therefore, assume $\bhat > 0$.
	
	Assume for contradiction \eqref{eq:fark-sys-2} does not have a solution. By Farkas' lemma, there exist $y \in \R^{|\indtwo|}$ and $z \in \R^{|\indone|}_{+}$ such that
	\begin{subequations}\label{eq:fark-family2}
		\begin{alignat}{2}
			y_j + \cparam_j z_i &\geq 0 \qquad && \forall i \in \indone,\ j \in \indtwo \label{eq:fark2-sub1} \\
			\mash{\sum\limits_{j \in \indtwo}} y_j + \mash{\sum\limits_{i \in \indone}} \aparam_i z_i &< 0. && \label{eq:fark2-sub2}
		\end{alignat}
	\end{subequations}
	We multiply \eqref{eq:fark2-sub1} by $\aparam_i/\bhat$ to obtain 
	\begin{align*}
		\frac{\aparam_i}{\bhat} y_j + \frac{\cparam_j}{\bhat} \aparam_i z_i &\geq 0 \qquad \forall i \in \indone,\ j \in \indtwo.
	\end{align*}
	Summing this expression over $i \in \indone$ and $j \in \indtwo$ produces the inequality
	\begin{align}
		\frac{\textstyle\sum_{i \in \indone} \aparam_i}{\bhat} \mash{\sum\limits_{j \in \indtwo}} y_j + \mash{\sum\limits_{i \in \indone}} \aparam_i z_i &\geq 0. \label{eq:farkas-proof-reduction}
	\end{align}
	By assumption, $\sum_{i \in \indone} \aparam_i - \bhat > 0$, which implies $\sum_{i \in \indone} \aparam_i / \bhat > 1$. Combining \eqref{eq:fark2-sub2} with the inequality $\sum_{i \in \indone} \aparam_i z_i \geq 0$, we conclude that $\sum_{j \in \indtwo} y_j < 0$. Thus, \eqref{eq:farkas-proof-reduction} implies
	\begin{align}
		\mash{\sum_{j \in \indtwo}} y_j + \mash{\sum_{i \in \indone}} \aparam_i z_i \geq 0 \label{eq:farkas-contradiction}
	\end{align}
	Inequality \eqref{eq:farkas-contradiction} contradicts \eqref{eq:fark2-sub2}. Therefore, \eqref{eq:fark-sys-2} has a solution.	
\end{proof}

\begin{proof}[Proof of Theorem~\ref{thm:tq-cuts}.]
	The statement is trivially true if $\Sset = \emptyset$. Therefore, assume $\Sset \neq \emptyset$. For ease of notation, let $\epsone_j \coloneqq \epsone_j(\Sset)$ for $j \in N \setminus \D$. Let $\helper \coloneqq \{j \in N \setminus \D \colon \epsone_j < +\infty\}$ and  $\helperc \coloneqq \{j \in N \setminus \D \colon \epsone_j = +\infty\}$. Note $j \in \helperc$ if and only if $\rbar^j \in \recc(\genset_{\D}^{\Q})$. Let $\xhat \in \PB$ satisfy $\sum_{j \in \Sset} \xhat_j / \enter_j - \sum_{j \in \helper} \xhat_j / \epsone_j > 1$. We show $\xhat \in \genset_{\D}^{\Q}$.
	
	We apply Lemma~\ref{lem:farkas-cons-2} with $\indone \coloneqq \Sset$, $\indtwo \coloneqq \helper$, $\aparam_i \coloneqq \xhat_i / \enter_i$ for $i \in \Sset$, and $\cparam_j \coloneqq \xhat_j / \epsone_j$ for $j \in \helper$. Thus, there exists $\theta \in \R^{|\Sset| \times |\helper|}_{+}$ satisfying
	\begin{subequations}\label{eq:fark2}
		\begin{alignat}{2}
			\sum\limits_{i \in \Sset} \theta_{ij} &= 1  &&\forall j \in \helper \label{eq:fark2-sum1} \\
			\mash{\sum\limits_{j \in \helper}} \ \frac{\xhat_j}{\epsone_j} \theta_{ij} 	&\leq \frac{\xhat_i}{\enter_i} \qquad &&\forall i \in \Sset. \label{eq:fark2-theta}
		\end{alignat}
	\end{subequations}
	By Proposition~\ref{prop:epshat-recc}, $q^{ij} \coloneqq \enter_i \rbar^i + \epsone_j \rbar^j \in \recc(\genset_{\D}^{\Q})$ for all $i \in \Sset$ and $j \in \helper$, because $0 < \epsone_j \leq \epshatone_{ij}$. Consequently,
	\begin{align}
		\rbar^j &= \frac{1}{\epsone_j} q^{ij} - \frac{1}{\epsone_j} \enter_i \rbar^i \qquad \forall i \in \Sset,\ j \in \helper. \label{eq:tq-rays-intermediate}
	\end{align}
	We use \eqref{eq:tq-rays-intermediate} and $\theta$ from \eqref{eq:fark2} to rewrite $\rbar^j$, $j \in \helper$:
	\begin{align}
		\rbar^j &= \sum\limits_{i \in \Sset} \theta_{ij} \bigg( \frac{1}{\epsone_j} q^{ij} - \frac{1}{\epsone_j} \enter_i \rbar^i \bigg). \label{eq:tq-rays-final}
	\end{align}
	We have $N = \D \cup \helper \cup \helperc$. Substituting \eqref{eq:tq-rays-final} into the definition of $\xhat$, we have:
	\begin{align}
		\xhat &= \xbasis + \sum\limits_{i \in \Sset} \xhat_i \rbar^i + \mash{\sum\limits_{j \in \D \setminus \Sset}} \xhat_j \rbar^j + \sum\limits_{j \in \helper} \xhat_j \rbar^j + \sum\limits_{j \in \helperc} \xhat_j \rbar^j \nonumber \\
		&= \xbasis + \sum\limits_{i \in \Sset} \xhat_i \rbar^i + \ \mash{\sum\limits_{j \in \helper}} \ \sum\limits_{i \in \Sset} \xhat_j \theta_{ij} \left( \frac{1}{\epsone_j} q^{ij} - \frac{1}{\epsone_j} \enter_i \rbar^i \right) + \mash{\sum\limits_{\quad j \in \helperc \cup (\D \setminus \Sset)}} \xhat_j \rbar^j \nonumber \\
		&= \xbasis + \sum\limits_{i \in \Sset} \bigg( \frac{\xhat_i}{\enter_i} - \mash{\sum\limits_{j \in \helper}} \theta_{ij} \frac{\xhat_j}{\epsone_j} \bigg) \enter_i \rbar^i + \sum\limits_{i \in \Sset} \ \mash{\sum\limits_{\ j \in \helper}} \theta_{ij} \frac{\xhat_j}{\epsone_j} q^{ij} + \mash{\sum\limits_{\quad j \in \helperc \cup (\D \setminus \Sset)}} \xhat_j \rbar^j. \label{eq:new-xhat}
	\end{align}
	By \eqref{eq:fark2-sum1}, the sum of the weights on the terms $\enter_i \rbar^i$, $i \in \Sset$ in \eqref{eq:new-xhat} are greater than $1$:
	\begin{align}
		\sum_{i \in \Sset} \bigg( \frac{\xhat_i}{\enter_i} - \mash{\sum_{j \in \helper}} \theta_{ij} \frac{\xhat_j}{\epsone_j} \bigg) = \sum_{i \in \Sset} \frac{\xhat_i}{\enter_i} - \mash{\sum_{j \in \helper}} \frac{\xhat_j}{\epsone_j} &> 1. \label{eq:tq:sumgeq1}
	\end{align}
	By \eqref{eq:fark2-theta}, each individual coefficient on $\enter_i \rbar^i$, $i \in \Sset$ in \eqref{eq:new-xhat} is nonnnegative. Together with \eqref{eq:tq:sumgeq1}, we have
	\begin{align}
		\xbasis + \sum\limits_{i \in \Sset} \bigg( \frac{\xhat_i}{\enter_i} - \mash{\sum\limits_{j \in \helper}} \theta_{ij} \frac{\xhat_j}{\epsone_j} \bigg) \enter_i \rbar^i \in \{\xbasis\} + \conv\bigg( \bigcup_{i \in \Sset} \interval{\enter_i}{+\infty}{i} \bigg). \label{eq:xhat-part1}
	\end{align}
	Continuing from \eqref{eq:xhat-part1}, $\{\xbasis\} + \conv ( \cup_{i \in \Sset} \interval{\enter_i}{+\infty}{i} ) \subseteq \genset_{\D}$, because $\Sset \subseteq \D$. Furthermore, because recession cone membership is preserved under addition,
	\begin{align}
		\sum\limits_{i \in \Sset} \ \mash{\sum\limits_{\ j \in \helper}} \theta_{ij} \frac{\xhat_j}{\epsone_j} q^{ij} + \mash{\sum\limits_{j \in \D \setminus \Sset}} \xhat_j \rbar^j + \sum\limits_{j \in \helperc} \xhat_j \rbar^j \in \recc(\genset_{\D}^{\Q}). \label{eq:xhat-part2}
	\end{align}
	This holds because $q^{ij} \in \recc(\genset_{\D}^{\Q})$ for all $i \in \Sset$ and $j \in \helper$, $\rbar^j \in \recc(\genset_{\D}^{\Q})$ for all $j \in \D$ by \eqref{eq:genset-def}, and $\rbar^j \in \recc(\genset_{\D}^{\Q})$ for all $j \in \helperc$. By \eqref{eq:xhat-part1} and \eqref{eq:xhat-part2}, we have $\xhat \in \genset_{\D} + \recc(\Q) = \genset_{\D}^{\Q}$.
\end{proof}
As a result of Theorem~\ref{thm:tq-cuts} and Proposition~\ref{prop:epshat-recc}, for any $\Sset \subseteq \Msetone$ and $\epshatone \in \R^{|N \setminus \D|}_{+}$ satisfying $\epshatone_j \in (0,\epsone_j(\Sset))$ for all $j \in N \setminus \D$, the inequality
\begin{align*}
	\sum_{j \in \Sset} \frac{x_j}{\enter_j} - \mash{\sum_{j \in N \setminus \D}} \ \frac{x_j}{\epshatone_j} \leq 1
\end{align*}
is valid for $\PB \setminus \genset_{\D}^{\Q}$. This is useful when calculating $\epsone_j(\Sset)$ approximately (e.g., via binary search), as it is sufficient to calculate a positive lower bound on $\epsone_j(\Sset)$. However, our choice of $\epsone_j$ yields a stronger inequality than inequalities corresponding to smaller choices of $\epshatone$.

\begin{contexample}
	Consider $\D = \N1$. Figures \ref{fig:pb-tq-4} and~\ref{fig:pb-tq-5} provide a graphical representation of Theorem~\ref{thm:tq-cuts} applied to Example~\ref{ex:tq}. The selection of $\epsone_1(\Sset)$ is shown in Figure~\ref{fig:pb-tq-4}, where $\Sset = \{2\}$. The vector $\epsone_1(\Sset) \rbar^1 + \enter_2 \rbar^2$ lies on the boundary of $\recc(\genset_{\D}^{\Q}) \cap \F_{12}$. We note $\recc(\genset_{\D}^{\Q}) = \recc(\Q)$. Figure~\ref{fig:pb-tq-5} shows the valid inequality of Theorem~\ref{thm:tq-cuts} using this selection of $\Sset$.
	
	\begin{figure}
		\begin{subfigure}[t]{0.44\textwidth}
			\vspace*{0pt}
			\centering
			\includegraphics[width=46mm]{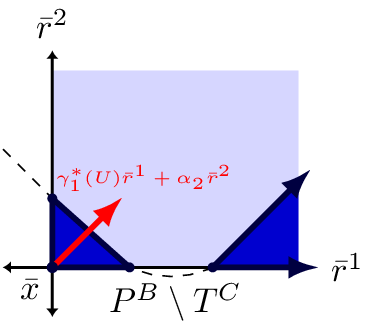}
			\caption{The maximal selection of $\epsone_1(\Sset)$, where $\Sset = \{2\}$. The vector $\epsone_1(\Sset) \rbar^1 + \enter_2 \rbar^2$ lies in $\recc(\Q)$. The darkened region is $\PB \setminus \T^\Q$, the derivation of which is shown in Figure~\ref{fig:pb-tq}.}
			\label{fig:pb-tq-4}
		\end{subfigure}%
		\begin{subfigure}[t]{0.05\textwidth}
			\quad
		\end{subfigure}%
		\begin{subfigure}[t]{0.44\textwidth}
			\vspace*{0pt}
			\centering
			\includegraphics[width=46mm]{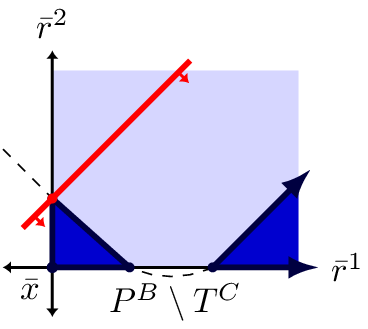}
			\caption{Theorem~\ref{thm:tq-cuts}'s inequality, $x_2/\enter_2 - x_1 / \epsone_1(\Sset) \leq 1$, is valid for $\PB \setminus \T^\Q$. The corresponding hyperplane $\{x \in \R^2 \colon x_2/\enter_2 - x_1 / \epsone_1(\Sset) = 1\}$ contains the point $\xbasis + \enter_2 \rbar^2$. The vector $\epsone_1(\Sset) \rbar^1 + \enter_2 \rbar^2$ is a recession direction of this hyperplane.}
			\label{fig:pb-tq-5}
		\end{subfigure}
		\caption{The valid inequality of Theorem~\ref{thm:tq-cuts} applied to Example~\ref{ex:tq}.}
	\end{figure}
\end{contexample}

We next consider the problem of selecting a subset of $\Msetone$ that yields the most violated inequality of the form \eqref{eq:tq-inequality} to cut off a candidate solution $\xhat \in \PB$. That is, we are interested in the separation problem
\begin{align}
	\max_{\Sset \subseteq \Msetone} \sum\limits_{j \in \Sset} \frac{\xhat_j}{\enter_j} - \mash{\sum\limits_{j \in N \setminus \D}} \ \frac{\xhat_j}{\epsone_j(\Sset)}. \label{eq:tq-separation}
\end{align}
For each $j \in N \setminus \D$, we define the function $\funcone_j \colon 2^{\Msetone} \rightarrow \R$ to be $\funcone_j(\Sset) = -\xhat_j / \min_{i \in \Sset} \epshatone_{ij}$ if $\Sset \neq \emptyset$, and $0$ otherwise. The value $\funcone_j(\Sset)$ is the contribution of index $j \in N \setminus \D$ to the objective function \eqref{eq:tq-separation} for a given $\Sset$.
\begin{proposition}\label{prop:tq-separation}
	The maximization problem \eqref{eq:tq-separation} is a supermodular maximization problem.
\end{proposition}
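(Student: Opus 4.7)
The plan is to decompose the objective into a modular piece and a sum of single-index pieces, and then show each single-index piece is supermodular. Write $f(\Sset) \coloneqq \sum_{j \in \Sset} \xhat_j/\enter_j - \sum_{j \in N \setminus \D} \xhat_j / \epsone_j(\Sset)$. The first sum is a linear (hence modular) set function on $\Msetone$, so it suffices to show that for each fixed $j \in N \setminus \D$, the map $\Sset \mapsto -\xhat_j / \epsone_j(\Sset)$ is supermodular. Since these single-index supermodular functions sum to a supermodular function, and adding a modular function preserves supermodularity, $f$ will be supermodular.

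Fix $j \in N \setminus \D$, and set $a_i \coloneqq 1/\epshatone_{ij} \in [0, +\infty)$ for $i \in \Msetone$ (noting $a_i = 0$ precisely when $\epshatone_{ij} = +\infty$, and $a_i$ is finite because $\epshatone_{ij} > 0$ by the definition of $\Msetone$). By the definition of $\epsone_j(\Sset)$, one has $1/\epsone_j(\Sset) = \max_{i \in \Sset} a_i$ for $\Sset \neq \emptyset$, and $1/\epsone_j(\emptyset) = 0 = \max_{i \in \emptyset} a_i$ under the convention $\max \emptyset = 0$. Therefore $-\xhat_j/\epsone_j(\Sset) = -\xhat_j \cdot \max_{i \in \Sset} a_i$.

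The next step is the classical observation that $h(\Sset) \coloneqq \max_{i \in \Sset} a_i$ (with $h(\emptyset) = 0$) is submodular on $2^{\Msetone}$ when $a_i \geq 0$. Indeed, for $\Sset \subseteq \Tset \subseteq \Msetone$ and $k \in \Msetone \setminus \Tset$, the marginal $h(\Sset \cup \{k\}) - h(\Sset) = \max\{a_k - h(\Sset), -h(\Sset)\}$ (the second argument covering the empty-$\Sset$ case via $h(\emptyset)=0$) is non-increasing in $\Sset$ because $h(\Sset) \leq h(\Tset)$. Hence $-h$ is supermodular, and since $\xhat_j \geq 0$ (as $\xhat \in \PB$ enforces $x_j \geq 0$ for $j \in N$), the function $\Sset \mapsto -\xhat_j \cdot h(\Sset)$ is supermodular as well.

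Summing over $j \in N \setminus \D$ and adding the modular term $\sum_{j \in \Sset} \xhat_j/\enter_j$ yields that $f$ is supermodular on $2^{\Msetone}$, so \eqref{eq:tq-separation} is a supermodular maximization problem. The only subtle point is the correct handling of the empty-set boundary case (where $\epsone_j(\emptyset) = +\infty$), which is resolved once one observes that with the convention $\max \emptyset = 0$ the identity $1/\epsone_j(\Sset) = \max_{i \in \Sset} a_i$ extends uniformly to $\Sset = \emptyset$; everything else is a direct application of the standard closure properties of supermodular functions.
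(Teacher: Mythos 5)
Your proof is correct and follows essentially the same route as the paper's: the paper defines $\funcone_j(\Sset) = -\xhat_j/\min_{i\in\Sset}\epshatone_{ij}$ and invokes ``standard properties of the $\min$ operator,'' which is precisely the decomposition you carry out explicitly---a modular term plus, for each $j \in N \setminus \D$, the supermodular function $-\xhat_j\max_{i\in\Sset}(1/\epshatone_{ij})$. One small slip worth fixing: the marginal of $h(\Sset)=\max_{i\in\Sset}a_i$ upon adding $k$ is $\max\{a_k-h(\Sset),\,0\}$, not $\max\{a_k-h(\Sset),\,-h(\Sset)\}$ (for $a_k\ge 0$ the latter collapses to $a_k-h(\Sset)$, which is incorrect whenever $a_k<h(\Sset)$); the correct expression is still non-increasing in $\Sset$ by the monotonicity of $h$, so your argument goes through unchanged.
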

\begin{proof}
	The separation problem \eqref{eq:tq-separation} can be equivalently written as
	\begin{align}
		\max_{\Sset \subseteq \Msetone} \sum\limits_{j \in \Sset} \frac{\xhat_j}{\enter_j} + \mash{\sum\limits_{j \in N \setminus \D}} \funcone_j(\Sset). \label{eq:tq-separation-rewrite}
	\end{align}
	From standard properties of the $\min$ operator, the objective function of \eqref{eq:tq-separation-rewrite} is the sum of supermodular (and modular) functions.
\end{proof}
By Proposition~\ref{prop:tq-separation}, the separation problem \eqref{eq:tq-separation} can be solved in strongly polynomial time (e.g., \citet{grotschel1981}, \citet{grotschel2012}, \citet{orlin2009}).

We next propose an extended formulation for the relaxation of $\PB \setminus \genset_{\D}^{\Q}$ defined by inequality \eqref{eq:tq-inequality} for all $\Sset \subseteq \Msetone$:
\begin{align*}
	\tqrelax_{\D} \coloneqq \bigg\lbrace x \in \R^{|N|}_{+} \colon \sum\limits_{j \in \Sset} \frac{x_j}{\enter_j} - \mash{\sum\limits_{j \in N \setminus \D}} \ \frac{x_j}{\epsone_j(\Sset)} \leq 1 \ \forall \Sset \subseteq \Msetone \bigg\rbrace.
\end{align*}
Let $\Msetone = \{1, \ldots, \tqcardone\}$ and $\tqcardtwo \coloneqq |N \setminus \D|$. For each $j \in N \setminus \D$, let $\pi_j(1), \pi_j(2), \ldots, \pi_j(\tqcardone)$ be ordered such that $\epshatone_{\pi_j(1),j} \leq \epshatone_{\pi_j(2),j} \leq \ldots \leq \epshatone_{\pi_j(\tqcardone),j}$. Similarly, for any $i \in \Msetone$, let $\ellone_j(i)$ satisfy $\pi_j(\ellone_j(i)) = i$. For all $j \in N \setminus \D$, let $\epshatone_{0j} \coloneqq +\infty$, $\theta_{0j} \coloneqq 0$, $v_{0j} \coloneqq 0$, $v_{\tqcardone+1,j} \coloneqq 0$, $\pi_j(0) \coloneqq 0$, and $\pi_j(\tqcardone+1) \coloneqq 0$. We define $\tqextended_{\D}$ to be the set of $(x,\theta,v,\lambda) \in \R_{+}^{|N|} \times \R_{+}^{\tqcardone \times \tqcardtwo} \times \R_{+}^{\tqcardone \times \tqcardtwo} \times \R^{\tqcardtwo}$ such that

\begin{equation*}\label{eq:sepip-dual}
	\begin{alignedat}{2}
		\sum\limits_{i \in \Msetone} \enspace \mash{\sum\limits_{j \in N \setminus \D}} \theta_{ij} + \mash{\sum\limits_{j \in N \setminus \D}} \lambda_j &\leq 1 \\
		\theta_{ij} + v_{ij} - v_{i+1,j} + \bigg( \frac{1}{\epshatone_{\pi_j(i+1),j}} - \frac{1}{\epshatone_{\pi_j(i),j}} \bigg) x_j &\geq 0 \quad && \forall i = 0, \ldots, \tqcardone,\ j \in N \setminus \D \\
		\mash{\sum\limits_{j \in N \setminus \D}} \theta_{\ellone_j(i),j} - \frac{1}{\enter_i}x_i &\geq 0 \quad && \forall i = 1, \ldots, \tqcardone.
	\end{alignedat}
\end{equation*}
Theorem~\ref{thm:extendedform1} establishes the relationship between $\tqextended_{\D}$ and the relaxation $\tqrelax_{\D}$.
\begin{theorem}\label{thm:extendedform1}
	The polyhedron $\tqextended_{\D}$ is an extended formulation of $\tqrelax_{\D}$:
	\begin{align*}
		\proj_x(\tqextended_{\D}) = \tqrelax_{\D}.
	\end{align*}
\end{theorem}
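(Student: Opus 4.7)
The plan is to prove both inclusions of the projection-equality claim separately, handling the hard direction with LP duality.

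For $\proj_x(\tqextended_{\D}) \subseteq \tqrelax_{\D}$, I would fix any $(x, \theta, v, \lambda) \in \tqextended_{\D}$ and any $\Sset \subseteq \Msetone$ and derive the $\Sset$-inequality defining $\tqrelax_{\D}$ by Farkas-style aggregation. For each $j \in N \setminus \D$, define $k_j \coloneqq \min\{k \in \{1, \ldots, \tqcardone\} : \pi_j(k) \in \Sset\}$ when $\Sset \neq \emptyset$, so that $\epsone_j(\Sset) = \epshatone_{\pi_j(k_j),j}$. I would assign multiplier $1$ to the first (normalization) constraint, multiplier $1$ to each third-type constraint for $i \in \Sset$, multiplier $1$ to each middle constraint $(i,j)$ with $i \in \{0, 1, \ldots, k_j - 1\}$, and suitable nonnegative multipliers on the $\theta \geq 0$, $v \geq 0$, and $\lambda \geq 0$ constraints to cancel residual coefficients. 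The critical telescoping identity
\[
\sum_{i=0}^{k_j - 1}\Bigl[\tfrac{1}{\epshatone_{\pi_j(i+1),j}} - \tfrac{1}{\epshatone_{\pi_j(i),j}}\Bigr] = \tfrac{1}{\epshatone_{\pi_j(k_j),j}} = \tfrac{1}{\epsone_j(\Sset)}
\]
produces precisely the coefficient of $x_j$, the telescoping over $v_{ij} - v_{i+1,j}$ collapses to a single $-v_{k_j,j}$ term absorbed by $v \geq 0$, and the remaining $\theta$-coefficients are absorbed by $\theta \geq 0$.

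For the reverse inclusion $\tqrelax_{\D} \subseteq \proj_x(\tqextended_{\D})$, I would fix $x \in \tqrelax_{\D}$ and invoke LP duality to produce feasible $(\theta, v, \lambda)$. Concretely, consider the LP of minimizing $\sum_{i,j} \theta_{ij}$ over $\theta, v \geq 0$ subject to the middle and third constraints of $\tqextended_{\D}$; it suffices to show its optimum value is at most $1$, since then the normalization constraint can be satisfied by choosing $\lambda \geq 0$ to absorb the remaining slack. By strong LP duality, this optimum equals the maximum of the dual LP. Introducing telescoping change-of-variables that track non-decreasing slacks indexed along the ordering $\pi_j$, I would identify this dual LP with the LP relaxation over $[0,1]^{\Msetone}$ of the separation problem $\max_{\Sset} \bigl[\sum_{i \in \Sset} x_i/\enter_i - \sum_{j} x_j/\epsone_j(\Sset)\bigr]$ studied in Proposition~\ref{prop:tq-separation}. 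Since that objective is the Lov{\'a}sz extension of the supermodular separation function, its LP maximum is attained at an integer vertex and therefore equals $\max_\Sset \mathrm{LHS}(\Sset)$; by definition of $\tqrelax_{\D}$ this is at most $1$, giving the bound on $\tau^*$ and hence the desired feasible $(\theta, v, \lambda)$.

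The main obstacle is executing the change of variables cleanly in the second inclusion: the $v_{ij}$ variables must be recognized as dual multipliers attached to the threshold structure $\epsone_j(\Sset) = \min_{i \in \Sset} \epshatone_{ij}$, and the conventions $\theta_{0j} = v_{0j} = v_{\tqcardone+1,j} = 0$ together with $\pi_j(0) = \pi_j(\tqcardone+1) = 0$ are exactly what make the telescoping identities in the dual align with the piecewise-linear function $\Sset \mapsto 1/\epsone_j(\Sset)$. Once this identification is made, the supermodularity result from Proposition~\ref{prop:tq-separation} finishes the proof without requiring an explicit closed-form construction of $(\theta, v, \lambda)$.
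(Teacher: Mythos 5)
Your proposal is correct and follows essentially the same route as the paper: both treat $\tqextended_{\D}$ as the dual description of the separation linear program \eqref{eq:sepip} for \eqref{eq:tq-separation} and conclude via strong LP duality (the technique of Martin), with your explicit constraint aggregation for the forward inclusion being just weak duality spelled out. The only substantive difference is the justification that the separation LP has an integral optimum: you invoke supermodularity and the Lov\'asz extension (leaning on Proposition~\ref{prop:tq-separation}), whereas the paper argues total unimodularity of the constraint matrix of \eqref{eq:sepip} after complementing the $z$ variables; both arguments are valid and interchangeable here.
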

\begin{proof}
	We first argue that the following linear program solves the separation problem \eqref{eq:tq-separation} for a fixed $\xhat \in \PB$:
	\begin{subequations}\label{eq:sepip}
		\begin{alignat}{4}
			\max_{y,z}\ && \sum\limits_{i \in \Msetone} \frac{\xhat_i}{\enter_i} z_i - &\mash{\sum\limits_{j \in N \setminus \D}} \enspace\ \sum\limits_{i = 1}^{\tqcardone} && \frac{\xhat_j}{\epshatone_{\pi_j(i),j}} (y_{i-1,j} - y_{ij}) && \label{eq:sepip-obj} \\
			\textrm{s.t.}\ && y_{0j} &= 1 && \forall j \in N \setminus \D && (\lambda_j) \label{eq:sepip-1} \\
			&& y_{ij} + z_{\pi_j(i)} &\leq 1 && \forall i = 1, \ldots, \tqcardone,\ j \in N \setminus \D \qquad && (\theta_{ij})\label{eq:sepip-2} \\
			&& y_{ij} - y_{i-1,j} &\leq 0 && \forall i = 1, \ldots, \tqcardone,\ j \in N \setminus \D && (v_{ij}) \label{eq:sepip-3} \\
			&& y_{ij} &\geq 0 && \forall i = 0, \ldots, \tqcardone,\ j \in N \setminus \D \label{eq:sepip-4} \\
			&& z_i &\geq 0 && \forall i \in \Msetone. \label{eq:sepip-5}
		\end{alignat}
	\end{subequations}		
	The constraint matrix of $\eqref{eq:sepip}$ is totally unimodular. To see this, we complement the $z_i$ variables with $1 - z_i$ for all $i \in \Msetone$ to obtain an equivalent problem. The resulting constraint matrix has $0,\pm 1$ entries, and each row contains no more than one $1$ and one $-1$.
	
	We show \eqref{eq:sepip} correctly models the separation problem \eqref{eq:tq-separation}. First, let $\Sset^*$ be the optimal solution of \eqref{eq:tq-separation}. We construct $(y^*,z^*)$ feasible to \eqref{eq:sepip} with objective function value equal to $\sum_{i \in \Sset^*} \xhat_i / \enter_i - \sum_{j \in N \setminus \D} \xhat_j / \epsone_j(\Sset^*)$. If $\Sset^* = \emptyset$, then the optimal objective value of the separation problem is $0$. In this case, set $y_{ij}^*=1$ for all $i = 0,1, \ldots, \tqcardone$ and $j \in N \setminus \D$, and set $z_i^*=0$ for all $i \in \Msetone$. Then $(y^*,z^*)$ is feasible to \eqref{eq:sepip} with objective value $0$. If $\Sset^* \neq \emptyset$, set $z_i^* = 1$ if $i \in \Sset^*$, and $0$ otherwise. For all $j \in N \setminus \D$, let $k_j \in \Sset^*$ be the smallest index satisfying $\pi_j(k_j) \in \arg \min_{i \in \Sset^*} \epshatone_{ij}$; that is, $k_j = \min \{k \colon \epshatone_{\pi_j(k_j),j} = \epsone_j(\Sset^*)\}$. For each $j \in N \setminus \D$, set $y^*_{ij} = 1$ for all $i = 0, 1, \ldots, k_j - 1$, and set $y^*_{ij} = 0$ for all $i = k_j, \ldots,  \tqcardone$. By construction, $(y^*,z^*)$ satisfies \eqref{eq:sepip-1}--\eqref{eq:sepip-5}. For a fixed $j \in N \setminus \D$, we have
	\begin{align*}
		\sum\limits_{i = 1}^{\tqcardone} \frac{\xhat_j}{\epshatone_{\pi_j(i),j}} (y^*_{i-1,j} - y^*_{ij}) = \frac{\xhat_j}{\epshatone_{\pi_j(k_j),j}} = \frac{\xhat_j}{\epsone_j(\Sset^*)},
	\end{align*}
	and the objective function \eqref{eq:sepip-obj} evaluates to the desired value of
	\begin{align*}
		\sum\limits_{i \in \Msetone} \frac{\xhat_i}{\enter_i} z^*_i - \mash{\sum\limits_{j \in N \setminus \D}} \enspace\ \sum\limits_{i = 1}^{\tqcardone}\frac{\xhat_j}{\epshatone_{\pi_j(i),j}} (y^*_{i-1,j} - y^*_{ij}) = \sum\limits_{i \in \Sset^*} \frac{\xhat_i}{\enter_i} - \mash{\sum\limits_{j \in N \setminus \D}} \ \frac{\xhat_j}{\epsone_j(\Sset^*)}.
	\end{align*}
	Now, let $(y^*,z^*)$ be an optimal solution to \eqref{eq:sepip}. Set $\Sset^* = \{i \in \Msetone \colon z^*_i = 1\}$. It remains to show $\sum_{i \in \Sset^*} \xhat_i / \enter_i - \sum_{j \in N \setminus \D} \xhat_j / \epsone_j(\Sset^*)$ is not less than the optimal objective value of \eqref{eq:sepip}. Recall the constraint matrix of \eqref{eq:sepip} is totally unimodular, so $(y^*,z^*)$ is $0$--$1$ valued. If $z^*_i = 0$ for all $i \in \Msetone$, then the separation problem objective evaluated at $\Sset^* = \emptyset$ is $0$ and the optimal objective value of \eqref{eq:sepip} is nonpositive, as desired. Next, assume $\sum_{i \in \Msetone} z^*_i \geq 1$. By constraints \eqref{eq:sepip-2}, given $j \in N \setminus \D$, $y^*_{ij} = 0$ for all $i \in \Msetone$. By constraints \eqref{eq:sepip-1} and \eqref{eq:sepip-3}, for each $j \in N \setminus \D$, there exists $k_j$ such that $y_{ij} = 1$ for $i = 0, \ldots, k_j - 1$ and $y_{ij} = 0$ for $i = k_j, \ldots, \tqcardone$. Then the optimal objective value of \eqref{eq:sepip} is
	\begin{align}
		\sum\limits_{i \in \Msetone} \frac{\xhat_i}{\enter_i} z^*_i - \mash{\sum\limits_{j \in N \setminus \D}} \enspace\ \sum\limits_{i = 1}^{\tqcardone}\frac{\xhat_j}{\epshatone_{\pi_j(i),j}} (y^*_{i-1,j} - y^*_{ij}) = \sum\limits_{i \in \Msetone} \frac{\xhat_i}{\enter_i} z^*_i - \mash{\sum\limits_{j \in N \setminus \D}} \ \frac{\xhat_j}{\epshatone_{\pi_j(k_j),j}}. \label{eq:sepipi-optobj}
	\end{align}
	Consider a fixed $j \in N \setminus \D$. By constraints \eqref{eq:sepip-2}, $z_{\pi_j(i)} = 0$ for $i = 1, \ldots, k_j - 1$. Then $\arg\min \{i \in \Msetone \colon z^*_i = 1\} \geq k_j$. Due to the ordering $\epshatone_{\pi_j(1),j} \leq \ldots \leq \epshatone_{\pi_j(\tqcardone),j}$, we have $\epsone_j(\Sset^*) = \min_{i \in \Sset^*} \epshatone_{ij} \geq \epshatone_{\pi_j(k_j),j}$. Therefore, the optimal objective value of the separation problem evaluated at $\Sset^*$ is at least as large as \eqref{eq:sepipi-optobj}:
	\begin{align*}
		\sum\limits_{i \in \Sset^*} \frac{\xhat_i}{\enter_i} - \mash{\sum\limits_{j \in N \setminus \D}} \ \frac{\xhat_j}{\epsone_j(\Sset^*)} 
		&\geq \sum\limits_{i \in \Msetone} \frac{\xhat_i}{\enter_i} z^*_i - \mash{\sum\limits_{j \in N \setminus \D}} \ \frac{\xhat_j}{\epshatone_{\pi_j(k_j),j}}.
	\end{align*}
	Hence, \eqref{eq:sepip} models the separation problem \eqref{eq:tq-separation} for a fixed $\xhat_j \in \PB$.
	
	We conclude by relating $\tqextended_{\D}$ to \eqref{eq:sepip}. The point $\xhat$ lies in $\tqrelax_{\D}$ if and only if the primal objective \eqref{eq:sepip-obj} does not exceed $1$. The linear program \eqref{eq:sepip} is feasible and bounded, so strong duality applies. Let $\lambda$, $\theta$, and $v$ be the linear program's dual variables, as labeled in \eqref{eq:sepip}. By strong duality, $\xhat \in \tqrelax_{\D}$ if and only if the dual of \eqref{eq:sepip} has objective value less than or equal to $1$. Because the dual of \eqref{eq:sepip} is a minimization problem, we enforce this condition with the constraint $\sum_{i \in \Msetone} \sum_{j \in N \setminus \D} \theta_{ij} + \sum_{j \in N \setminus \D} \lambda_j \leq 1$. We also replace the fixed $\xhat$ in the dual of \eqref{eq:sepip} with the nonnegative variable $x \in \R^{|N|}_{+}$. Thus, $x \in \tqrelax_{\D}$ if and only if there exists $(\theta,v,\lambda)$ satisfying the dual constraints of \eqref{eq:sepip} and the aforementioned dual objective cut. These constraints define $\tqextended_{\D}$.
\end{proof}

Within the proof of Theorem~\ref{thm:extendedform1}, we show that the linear program \eqref{eq:sepip} can be used to solve the separation problem \eqref{eq:tq-separation}. The remainder of the proof uses the separation linear program \eqref{eq:sepip} and duality theory to derive an extended formulation, a technique that was first proposed by \citet{martin1991}.

For a fixed $j \in N \setminus \D$, the constraints \eqref{eq:sepip-1}, \eqref{eq:sepip-3}, and \eqref{eq:sepip-4} form an instance of the mixing set, first studied by \citet{gunluk2001}. The proof's derivation of the extended formulation $\tqextended_{\D}$ follows results from \citet{luedtke2008} and \citet{miller2003}.

Proposition~\ref{prop:no-tq-cuts} states that if no cuts of the form \eqref{eq:tq-inequality} exist, then there exist no valid inequalities for $\clconv (\PB \setminus \Q)$ other than those defining $\PB$.
\begin{proposition}\label{prop:no-tq-cuts}
	Under Assumption~\ref{ass:recc}, if $\Msetone = \emptyset$, then $\clconv(\PB \setminus \genset_{\D}^{\Q}) = \PB$.
\end{proposition}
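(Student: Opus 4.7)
My plan is to prove $\PB \subseteq \clconv(\PB \setminus \genset_{\D}^{\Q})$ (the reverse inclusion is immediate). Since $\PB = \{\xbasis\} + \cone(\{\rbar^j : j \in N\})$, it suffices to verify (a) $\xbasis \in \PB \setminus \genset_{\D}^{\Q}$, and (b) each $\rbar^j$, $j \in N$, is a recession direction of the closed convex set $\clconv(\PB \setminus \genset_{\D}^{\Q})$. Together (a) and (b) force $\clconv(\PB \setminus \genset_{\D}^{\Q})$ to contain $\{\xbasis\}$ and every conic combination of the $\rbar^j$, which is all of $\PB$.

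The first step I would carry out is to translate membership in $\genset_{\D}^{\Q}$ into a concrete coefficient condition. Assumption~\ref{ass:recc} gives $\recc(\Q) \subseteq \recc(\PB) = \cone(\{\rbar^k : k \in N\})$, so every $q \in \recc(\Q)$ has a unique expansion $q = \sum_{k \in N} q_k \rbar^k$ with $q_k \geq 0$. Combined with the linear independence of the $\rbar^k$ and with the parametrization $\genset_{\D} = \{\xbasis\} + \{\sum_{k \in \D} \beta_k \rbar^k : \beta_k \geq 0,\ \sum_{k \in \D} \beta_k / \enter_k > 1\}$, this yields the following criterion: $\xhat = \xbasis + \sum_{k \in N} \xhat_k \rbar^k \in \PB$ lies in $\genset_{\D}^{\Q}$ iff there exists $q \in \recc(\Q)$ with $q_k = \xhat_k$ for $k \in N \setminus \D$, $q_k \in [0,\xhat_k]$ for $k \in \D$, and $\sum_{k \in \D}(\xhat_k - q_k)/\enter_k > 1$. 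Applied to $\xhat = \xbasis$, the last inequality reads $0 > 1$, establishing (a). Applied to $\xhat = \xbasis + \lambda \rbar^j$ with $j \in N \setminus \D$, it again reduces to $0 > 1$, so the entire halfline sits in $\PB \setminus \genset_{\D}^{\Q}$ and $\rbar^j$ is a recession direction of $\clconv(\PB \setminus \genset_{\D}^{\Q})$.

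The substantive part of (b) is the case $j \in \D$. Using $\Msetone = \emptyset$, I fix $j' \in N \setminus \D$ with $\epshatone_{j,j'} = 0$. Unpacking the definition of $\epshatone_{j,j'}$ via the same coefficient matching (invoking Assumption~\ref{ass:recc} for the nonnegativity of the $q_k$) transforms the hypothesis into the statement that $\alpha \rbar^j + \mu \rbar^{j'} \notin \recc(\Q)$ for every $\alpha \in [0,\enter_j]$ and every $\mu > 0$. The key step is then a cone-scaling argument: the set $S \coloneqq \{(\alpha,\mu) \in \R^{2}_{+} : \alpha \rbar^j + \mu \rbar^{j'} \in \recc(\Q)\}$ is a convex cone, so if it contained any $(\alpha_0,\mu_0)$ with $\mu_0 > 0$, rescaling by $t \coloneqq \min(1, \enter_j/\alpha_0) > 0$ (or $t=1$ if $\alpha_0 = 0$) would produce a point of $S$ in $[0,\enter_j] \times (0,+\infty)$, contradicting the hypothesis. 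Hence $S \subseteq \R_{+} \times \{0\}$; in particular, no $q_j \rbar^j + \epsilon \rbar^{j'}$ with $\epsilon > 0$ lies in $\recc(\Q)$. Feeding this back into the membership criterion shows $\xbasis + \lambda \rbar^j + \epsilon \rbar^{j'} \in \PB \setminus \genset_{\D}^{\Q}$ for every $\lambda \geq 0$ and $\epsilon > 0$, and letting $\epsilon \to 0^{+}$ places the entire ray $\{\xbasis + \lambda \rbar^j : \lambda \geq 0\}$ in $\cl(\PB \setminus \genset_{\D}^{\Q}) \subseteq \clconv(\PB \setminus \genset_{\D}^{\Q})$, making $\rbar^j$ a recession direction.

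The main obstacle is the cone-scaling step: $\epshatone_{j,j'} = 0$ directly rules out $\alpha \rbar^j + \mu \rbar^{j'} \in \recc(\Q)$ only when $\alpha$ lies in the narrow strip $[0,\enter_j]$, and extending this to all $\mu > 0$ is what promotes a local condition near $\alpha = \enter_j$ into the global assertion needed to make the perturbation $\epsilon \rbar^{j'}$ escape $\genset_{\D}^{\Q}$ for arbitrarily large $\lambda$. Once this step is in hand, the remaining assembly from the membership criterion and the closedness of $\clconv(\cdot)$ is routine.
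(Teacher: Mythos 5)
Your proof is correct and follows essentially the same route as the paper's: decompose $\PB$ into its apex $\xbasis$ and extreme rays, show via Assumption~\ref{ass:recc} and the linear independence of $\{\rbar^j \colon j \in N\}$ that the rays indexed by $N \setminus \D$ lie entirely in $\PB \setminus \genset_{\D}^{\Q}$, and handle each ray indexed by $j \in \D$ by perturbing with $\rbar^{j'}$ for some $j' \in N \setminus \D$ with $\epshatone_{j,j'} = 0$ (a cone-scaling step in both arguments) and letting the perturbation vanish. The only, harmless, difference is in the $\D$-ray case: the paper certifies the perturbed point as an element of $\conv(\PB \setminus \genset_{\D}^{\Q})$ by taking a convex combination of $\xbasis$ with a sufficiently distant point along the perturbed direction, whereas you prove the slightly stronger statement that the entire perturbed halfline already lies in $\PB \setminus \genset_{\D}^{\Q}$, which removes the need for the ``sufficiently large $M$'' step.
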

\begin{proof}
	It suffices to show $\{\xbasis\} + \lcinterval{0}{+\infty}{i} \subseteq \clconv(\PB \setminus \genset_{\D}^{\Q})$ for $i \in N$.
	
	We first show $\{\xbasis\} + \lcinterval{0}{+\infty}{i} \subseteq \PB \setminus \genset_{\D}^{\Q}$ for $i \in N \setminus \D$. Assume for contradiction there exists $k \in N \setminus \D$ and $\gamma \geq 0$ such that $\xbasis + \gamma \rbar^k \in \genset_{\D}^{\Q}$. By the definition of $\genset_{\D}^{\Q}$ in \eqref{eq:genset-def}, there exists $\lambda \in \R^{|D|}_{+}$, $\theta \in \R^{|\D|}_{+}$, and $q \in \recc(\Q)$ such that $\lambda_j > \enter_j$ for all $j \in \D$, $\sum_{j \in \D} \theta_j = 1$, and
	\begin{align*}
		\xbasis + \gamma \rbar^k &= \xbasis + \sum\limits_{j \in \D} \theta_j \lambda_j \rbar^j + q.
	\end{align*}
	Equivalently, we have $q = \gamma \rbar^k - \sum_{j \in \D} \theta_j \lambda_j \rbar^j$. Because the vectors $\{\rbar^j \colon j \in N\}$ are linearly independent and there exists $k \in \D$ such that $-\theta_j \lambda_j < 0$, it holds that $q \notin \cone(\{\rbar^j \colon j \in N\}) = \recc(\PB)$. This contradicts Assumption~\ref{ass:recc}, which states $\recc(\Q) \subseteq \recc(\PB)$.
	
	Now, consider $i \in \D$, $\lambda > 0$, and $\epshatone > 0$. Because $\Msetone = \emptyset$, there exists $j \in N \setminus \D$ such that $\lambda \rbar^i + \epshatone \rbar^j \notin \recc(\genset_{\D}^{\Q})$. Then for a sufficiently large $M > 1$, $\xbasis + M (\lambda \rbar^i + \epshatone \rbar^j) \notin \genset_{\D}^{\Q}$. We have that $\xbasis + \lambda \rbar^i + \epshatone \rbar^j$ is a convex combination of $\xbasis \notin \genset_{\D}^{\Q}$ and $\xbasis + M (\lambda \rbar^i + \epshatone \rbar^j) \notin \genset_{\D}^{\Q}$. Thus, $\xbasis + \lambda \rbar^i + \epshatone \rbar^j \in \conv(\PB \setminus \genset_{\D}^{\Q})$ for all $\epshatone > 0$, so $\xbasis + \lambda \rbar^i \in \clconv(\PB \setminus \genset_{\D}^{\Q})$.
\end{proof}
In the case where $\D = \N1$ ($\genset_{\D}^{\Q} = \Rset^\Q$) and $\Msetone = \emptyset$, Theorem~\ref{thm:conv-cq} and Proposition~\ref{prop:no-tq-cuts} together give us $\Msetone = \emptyset \implies \clconv(\PB \setminus \Q) = \PB$.

\subsection{A multi-term disjunction valid for \texorpdfstring{$\PB \setminus \T^\Q$}{P\carrot B \back T\carrot C}}\label{subsec:multiterm}

The set $\PB \setminus \T^\Q$ has the potential to be a tighter relaxation of $\PB \setminus \Q$ than the two-term disjunction of Theorem~\ref{thm:n1-n2}, because it considers the full structure of $\recc(\Q)$. In this section, we derive a valid disjunction for $\PB \setminus \T^\Q$ that contains $|\N2| + 1$  terms. These terms are defined by nonconvex sets, but in Section~\ref{subsec:skq-cuts-combined} we derive polyhedral relaxations of each term. Given the valid disjunction we derived for $\PB \setminus \T^\Q$, these polyhedral relaxations can be used with other inequalities defining $\P$ to obtain a union of polyhedra that contains $\P \setminus \T^\Q$. The disjunctive programming approach of Balas can be applied to construct a CGLP to find a valid inequality that separates a candidate solution from $\clconv(\P \setminus \T^\Q)$.

Let $\So^\Q$ be defined as follows:
\begin{align}\label{eq:def-s0q}
	\So^\Q &\coloneqq \{\xbasis\} + \conv \big( \textstyle\bigcup_{j \in \Nonetwo} \interval{\enter_j}{+\infty}{j} \big) + \recc(\Q).
\end{align}
We define the following sets for $k \in \N2$:
\begin{alignat}{2}
	\Sk^\Q &\coloneqq \{\xbasis\} + \conv \big( \textstyle\bigcup_{j \in \N2} \lcinterval{0}{\exit_j}{j} \big) + \rcinterval{-\infty}{0}{k} + \recc(\Q). \label{eq:def-skq}
\end{alignat}
The sets $\So^\Q$ and $\Sk^\Q$ ($k \in \N2$) are the foundation of our multi-term valid disjunction for $\PB \setminus \T^\Q$.

In Proposition~\ref{prop:rewrite-skq}, we present an equivalent construction of $\Sk^\Q$ ($k \in \N2$).
\begin{proposition}\label{prop:rewrite-skq}
	For $k \in \N2$, $\Sk^\Q$ can be written as
	\begin{align}
		\Sk^\Q &= \{\xbasis\} + \conv \big( \textstyle\bigcup_{j \in \Nonetwo} \interval{\enter_j}{\exit_j}{j} \big) + \rcinterval{-\infty}{0}{k} + \recc(\Q). \label{eq:def-skq-alt}
	\end{align}
\end{proposition}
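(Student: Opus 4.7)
The plan is to prove the set equality by establishing both inclusions. Write $A \coloneqq \conv(\bigcup_{j \in \N2} \lcinterval{0}{\exit_j}{j})$, $B \coloneqq \conv(\bigcup_{j \in \Nonetwo} \interval{\enter_j}{\exit_j}{j})$, $C \coloneqq \rcinterval{-\infty}{0}{k}$, and $R \coloneqq \recc(\Q)$, so the claim reduces to $A + C + R = B + C + R$. Two simple facts underlie both directions: $C$ is a convex cone, so $C + C = C$; and $0 \in A$ because $k \in \N2$ ensures $0 \in \lcinterval{0}{\exit_k}{k}$.

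For the direction $B + C + R \subseteq A + C + R$, I would take a generic $b = \sum_{j \in J} \alpha_j \mu_j \rbar^j$ with $J \subseteq \Nonetwo$, $\alpha_j > 0$, $\sum_j \alpha_j = 1$, and $\mu_j \in (\enter_j, \exit_j)$, and partition $J$ into $J_1 \coloneqq J \cap \N1$ and $J_2 \coloneqq J \cap \N2$. For $j \in J_1$, the observation $\rbar^j \in \recc(\Q)$ from Section~\ref{sec:cuts-1} puts $\alpha_j \mu_j \rbar^j$ in $R$. For $j \in J_2$, $\mu_j \rbar^j \in \lcinterval{0}{\exit_j}{j}$; letting $\beta \coloneqq \sum_{j \in J_2} \alpha_j$, the partial sum $\sum_{j \in J_2} \alpha_j \mu_j \rbar^j$ equals $\beta v$ for some $v \in A$ (with the convention $v = 0$ if $\beta = 0$), and $\beta v$ lies in $A$ as the convex combination $\beta v + (1-\beta) \cdot 0$. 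Hence $b \in A + R$, and adding $C + R$ closes the direction.

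For the reverse inclusion $A + C + R \subseteq B + C + R$, it suffices to show the pointwise containment $\lcinterval{0}{\exit_j}{j} \subseteq B + C$ for each $j \in \N2$: convexity of $B + C$ then gives $A \subseteq B + C$, and $C + C = C$ finishes the argument. Fix $j \in \N2$ and $\lambda \in [0, \exit_j)$. If $\lambda \in (\enter_j, \exit_j)$, then $\lambda \rbar^j \in B$ directly. Otherwise $\lambda \in [0, \enter_j]$, which I split into two sub-cases. If $j = k$, pick any $\mu \in (\enter_k, \exit_k)$ (which automatically satisfies $\mu > \lambda$) and write $\lambda \rbar^k = \mu \rbar^k + (\lambda - \mu)\rbar^k$, where $\mu \rbar^k \in B$ and $(\lambda - \mu)\rbar^k \in C$. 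If $j \neq k$, pick $\mu_j \in (\enter_j, \exit_j)$ with $\mu_j > \lambda$ and any $\mu_k \in (\enter_k, \exit_k)$, set $\alpha = \lambda / \mu_j \in [0, 1)$ and $\gamma = 1 - \alpha$, and decompose $\lambda \rbar^j = \bigl(\alpha \mu_j \rbar^j + \gamma \mu_k \rbar^k\bigr) + (-\gamma \mu_k)\rbar^k$; the bracketed term is a convex combination of elements of $\interval{\enter_j}{\exit_j}{j}$ and $\interval{\enter_k}{\exit_k}{k}$ (both contained in $B$ since $j, k \in \N2 \subseteq \Nonetwo$), and $-\gamma \mu_k \leq 0$ places the residual in $C$.

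The main obstacle is the final sub-case above, where $j \in \N2$, $j \neq k$, and $\lambda \in [0, \enter_j]$. Here $\lambda \rbar^j$ is ``too close to the apex'' to lie in any single segment $\interval{\enter_{j'}}{\exit_{j'}}{j'}$ appearing in $B$, and for $j \neq k$ no recession direction along $\rbar^j$ is guaranteed by Assumption~\ref{ass:recc} or by the definitions of $\N1, \N2$. The trick is to put convex mass on $\mu_k \rbar^k$ inside $B$ and then cancel that $\rbar^k$-contribution via $-\gamma \mu_k \rbar^k \in C$; this is precisely where the singled-out index $k \in \N2$ earns its role in the definition of $\Sk^\Q$, and it is the only place in the argument where linear independence of the $\rbar^j$ implicitly matters.
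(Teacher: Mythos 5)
Your proof is correct and rests on the same two ideas as the paper's: absorbing the $\N1$ segments into $\recc(\Q)$, and using the fact that a point $\mu_k\rbar^k$ on the $k$-th segment can be cancelled by $-\mu_k\rbar^k \in \rcinterval{-\infty}{0}{k}$ (the paper phrases this as $0 \in \interval{\enter_k}{\exit_k}{k} + \rcinterval{-\infty}{0}{k}$) to reach points of $\lcinterval{0}{\exit_j}{j}$ below $\enter_j$. The only difference is presentational: you chase elements through explicit convex combinations, while the paper manipulates the sets directly via $\conv(A_1+A_2)=\conv(A_1)+\conv(A_2)$ and distribution of Minkowski sums over unions.
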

\begin{proof}
	For $A_1,A_2 \subseteq \R^n$, it holds that $\conv(A_1 + A_2) = \conv(A_1) + \conv(A_2)$. For $B \subseteq \R^n$, we also have $(A_1 \cup A_2) + B = (A_1 + B) \cup (A_2 + B)$. This gives us:
	\begin{align}
		\Sk^\Q &= \{\xbasis\} + \conv \big( \textstyle\bigcup_{j \in \N2} \lcinterval{0}{\exit_j}{j} \big) + \rcinterval{-\infty}{0}{k} + \recc(\Q) \nonumber \\
		&= \{\xbasis\} + \conv \big( \textstyle\bigcup_{j \in \N2} ( \lcinterval{0}{\exit_j}{j} + \rcinterval{-\infty}{0}{k} + \recc(\Q) ) \big). \label{eq:skq-stepone}
	\end{align}
	Observe that $\interval{\enter_j}{\exit_j}{j} \subseteq \{0\} + \recc(\Q)$ for all $j \in \N1$. This allows us to rewrite $\Sk^\Q$ from \eqref{eq:skq-stepone} as
	\begin{align}
		\Sk^\Q &= \{\xbasis\} + \conv \big( \textstyle\bigcup_{j \in \Nonetwo} ( \lcinterval{0}{\exit_j}{j} + \rcinterval{-\infty}{0}{k} + \recc(\Q) ) \big). \label{eq:skq-prep-replace}
	\end{align}
	Finally, note that $0 \in \interval{\enter_k}{\exit_k}{k} + \rcinterval{-\infty}{0}{k}$. Hence, for all $j \in \Nonetwo$, we can replace $\lcinterval{0}{\exit_j}{j}$ in the convex hull operator of \eqref{eq:skq-prep-replace} with $\interval{\enter_j}{\exit_j}{j}$:
	\begin{align*}
		\Sk^\Q &= \{\xbasis\} + \conv \big( \textstyle\bigcup_{j \in \Nonetwo} ( \interval{\enter_j}{\exit_j}{j} + \rcinterval{-\infty}{0}{k} + \recc(\Q) ) \big) \\
		&= \{\xbasis\} + \conv \big( \textstyle\bigcup_{j \in \Nonetwo} \interval{\enter_j}{\exit_j}{j} \big) + \rcinterval{-\infty}{0}{k} + \recc(\Q).  \tag*{\qedhere}
	\end{align*}
\end{proof}

Theorem~\ref{thm:disjunction} presents a disjunctive representation of $\PB \setminus \T^\Q$. Throughout, let $\Ncup \coloneqq \N2 \cup \{0\}$.
\begin{theorem}\label{thm:disjunction}
	It holds that
	\begin{align}
		\PB \setminus \T^\Q = \mash{\bigcup_{k \in \Ncup}} \ (\PB \setminus \Sk^\Q). \label{eq:disjunction}
	\end{align}
\end{theorem}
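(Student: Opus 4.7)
The plan is to reduce the claim to proving $\T^\Q \cap \PB = (\bigcap_{k \in \Ncup} \Sk^\Q) \cap \PB$, which is equivalent to \eqref{eq:disjunction} via the set identity $\bigcup_{k} (\PB \setminus \Sk^\Q) = \PB \setminus \bigcap_{k} \Sk^\Q$. The inclusion $\T^\Q \subseteq \Sk^\Q$ for every $k \in \Ncup$ is the easy direction: for $k \in \N2$, Proposition~\ref{prop:rewrite-skq} yields $\Sk^\Q = \T^\Q + \rcinterval{-\infty}{0}{k}$ and $0 \in \rcinterval{-\infty}{0}{k}$; for $k = 0$, $\T^\Q \subseteq \So^\Q$ follows from $\interval{\enter_j}{\exit_j}{j} \subseteq \interval{\enter_j}{+\infty}{j}$ for each $j \in \Nonetwo$.

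For the reverse inclusion, I would extract from $x \in \bigcap_{k \in \Ncup} \Sk^\Q$ two pieces of information. Unpacking $x \in \So^\Q$, write $x = \xbasis + \sum_{j \in \Nonetwo} \theta_j \mu_j \rbar^j + q$ with $\theta_j \geq 0$, $\sum_{j} \theta_j = 1$, $\mu_j > \enter_j$, and $q \in \recc(\Q)$. For each $j \in \Nonetwo$, split $\mu_j = \nu_j + (\mu_j - \nu_j)$, where $\nu_j = \mu_j$ if $\mu_j < \exit_j$ and otherwise $\nu_j$ is any element of $(\enter_j, \exit_j)$. Set $c_j \coloneqq \theta_j(\mu_j - \nu_j) \geq 0$; since $\exit_j = +\infty$ for $j \in \N1$, only indices in $\N2$ can produce positive $c_j$. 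The decomposition then yields $x - \sum_{j \in \N2} c_j \rbar^j = \xbasis + \sum_{j \in \Nonetwo} \theta_j \nu_j \rbar^j + q \in \T^\Q$. Simultaneously, for each $k \in \N2$, applying Proposition~\ref{prop:rewrite-skq} to $x \in \Sk^\Q$ gives $\alpha_k \geq 0$ with $x + \alpha_k \rbar^k \in \T^\Q$.

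Let $K \coloneqq \{k \in \N2 : c_k > 0\}$. If $K = \emptyset$ then $x \in \T^\Q$ already; if $\alpha_k = 0$ for some $k \in K$ then $x = x + \alpha_k \rbar^k \in \T^\Q$. Otherwise all $\alpha_k$ for $k \in K$ are positive, and I would define weights $\mu_0 \coloneqq (1 + \sum_{k \in K} c_k/\alpha_k)^{-1}$ and $\mu_k \coloneqq \mu_0 c_k/\alpha_k$ for $k \in K$. These are positive and sum to $1$, and a direct computation gives
\[ \mu_0 \bigg( x - \sum_{j \in \N2} c_j \rbar^j \bigg) + \sum_{k \in K} \mu_k (x + \alpha_k \rbar^k) = x, \]
because the coefficient of $\rbar^k$ on the left side equals $-\mu_0 c_k + \mu_k \alpha_k = 0$ for $k \in K$ while $c_k = 0$ for $k \in \N2 \setminus K$. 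Convexity of $\T^\Q$ then forces $x \in \T^\Q$.

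The main delicate step will be the decomposition of the $\mu_j$ in the second paragraph: to carve off the ``excess'' beyond $\exit_j$ into a nonnegative multiple of $\rbar^j$ while keeping $\nu_j$ strictly inside the open interval $(\enter_j, \exit_j)$ requires separately handling $\mu_j < \exit_j$ and $\mu_j \geq \exit_j$, and relies on $\exit_j > \enter_j$ for $j \in \N2$ so that such a $\nu_j$ exists. Once this bookkeeping is in place, the closing identity is routine algebra.
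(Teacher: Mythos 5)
Your proof is correct, and its skeleton matches the paper's: reduce \eqref{eq:disjunction} to the set identity $\T^\Q = \bigcap_{k \in \Ncup} \Sk^\Q$, get the easy inclusion from the definitions and Proposition~\ref{prop:rewrite-skq}, and for the reverse inclusion split the $\So^\Q$-representation of $x$ at the thresholds $\exit_j$ to isolate an excess $\sum_{j \in \N2} c_j \rbar^j$, which is then cancelled against the backward steps $\alpha_k \rbar^k$ coming from the $\Sk^\Q$-representations via a convex combination. Where you genuinely diverge is in how that cancellation is carried out. The paper invokes Lemma~\ref{lem:farkas-cons-1}, a Farkas-lemma existence result introduced solely for this purpose, to produce multipliers $\theta$ with $\theta_0 \delta^0_k \mu_k = \theta_k \eta_k$, and then combines the raw representations \eqref{eq:proof-xhat-s0} and \eqref{eq:proof-xhat-sk}, re-verifying at the end that the combined point lies in $\T + \recc(\Q)$. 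You instead first certify that each auxiliary point $x - \sum_{j \in \N2} c_j \rbar^j$ and $x + \alpha_k \rbar^k$ already lies in $\T^\Q$, write the multipliers in closed form ($\mu_0 = (1 + \sum_{k \in K} c_k/\alpha_k)^{-1}$, $\mu_k = \mu_0 c_k/\alpha_k$), dispose of the degenerate cases $K = \emptyset$ and $\alpha_k = 0$ directly, and conclude by convexity of $\T^\Q$. Your formula is exactly the solution of the system in Lemma~\ref{lem:farkas-cons-1} in the nondegenerate case, so your route buys a self-contained argument that dispenses with the Farkas machinery and reduces the final verification to checking that the coefficient of each $\rbar^k$ vanishes; the cost is that the degeneracies must be handled by separate cases rather than absorbed uniformly by the lemma. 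The one delicate point you flagged --- choosing $\nu_j$ strictly inside $(\enter_j, \exit_j)$, which needs $\exit_j > \enter_j$ for $j \in \N2$ --- is handled correctly.
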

Before proving Theorem~\ref{thm:disjunction}, we prove a consequence of Farkas' lemma \cite{farkas1902}.
\begin{lemma}\label{lem:farkas-cons-1}
	Let $\aparam, \cparam \in \R^{\lparam}_{+}$, where $\sum_{i = 1}^{\lparam} \aparam_i > 0$. There exists $\theta \in \R^{\lparam + 1}_{+}$ such that
	\begin{align}\label{eq:fark-sys-1}
		\begin{split}
			\sum\limits_{i = 0}^{\lparam} \theta_i &= 1 \\
			\aparam_i \theta_0  - \cparam_i \theta_i &= 0  \qquad i = 1, \ldots, \lparam.
		\end{split}
	\end{align}
\end{lemma}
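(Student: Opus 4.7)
The plan is to apply Farkas' lemma to the linear system \eqref{eq:fark-sys-1} directly. First I would encode the system as $A\theta = b$, $\theta \geq 0$, where $A$ has a top row of all ones (enforcing $\sum_{i=0}^{\lparam} \theta_i = 1$) and, for each $i = 1, \ldots, \lparam$, a row with $\aparam_i$ in column $0$ and $-\cparam_i$ in column $i$, with $b = (1, 0, \ldots, 0)^\tp$. By Farkas' lemma, either the desired $\theta \geq 0$ exists, or there is a certificate of infeasibility $y = (y_0, y_1, \ldots, y_\lparam)$ satisfying $A^\tp y \geq 0$ and $b^\tp y < 0$. I would proceed by assuming for contradiction that such a $y$ exists and then ruling it out.

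From $b^\tp y < 0$ we immediately obtain $y_0 < 0$. The column-$i$ constraint (for $i \geq 1$) reads $y_0 - \cparam_i y_i \geq 0$, i.e.\ $\cparam_i y_i \leq y_0 < 0$; if $\cparam_i = 0$ this forces $y_0 \geq 0$, contradicting $y_0 < 0$, so $\cparam_i > 0$ and hence $y_i \leq y_0 / \cparam_i < 0$ for every $i \geq 1$. The column-$0$ constraint is $y_0 + \sum_{i=1}^{\lparam} \aparam_i y_i \geq 0$. Since $\aparam_i \geq 0$ and $y_i < 0$ for all $i \geq 1$, every term $\aparam_i y_i$ is nonpositive, so $\sum_{i=1}^{\lparam} \aparam_i y_i \leq 0$, giving $y_0 \geq -\sum_i \aparam_i y_i \geq 0$, contradicting $y_0 < 0$. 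Therefore the Farkas alternative is infeasible and a nonnegative $\theta$ satisfying \eqref{eq:fark-sys-1} must exist.

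The only real subtlety — and thus the main obstacle — is carefully setting up the Farkas transpose so that the row/column inequalities come out with the right signs. In particular, one must notice that the column-$i$ inequality $\cparam_i y_i \leq y_0 < 0$ already rules out $\cparam_i = 0$ without any case analysis, after which the rest is immediate from $\aparam \geq 0$. I would also remark, either in the proof or afterward, that the hypothesis $\sum_{i=1}^{\lparam} \aparam_i > 0$ is not strictly required for the existence of $\theta$ (the argument above goes through without it), but it is natural and is used in the subsequent application to Theorem~\ref{thm:disjunction}.
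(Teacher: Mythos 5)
Your proof is correct and follows essentially the same route as the paper: both apply Farkas' lemma to the system directly and refute the alternative certificate $y$ by showing $y_i < 0$ for all $i \geq 1$ and then contradicting the column-$0$ inequality. Your explicit handling of the $\cparam_i = 0$ case and your observation that the hypothesis $\sum_{i=1}^{\lparam} \aparam_i > 0$ is not actually needed are both accurate minor refinements of the paper's argument, which invokes that hypothesis in its final step even though $y_0 < 0$ alone suffices.
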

\begin{proof}
	By Farkas' lemma, either system \eqref{eq:fark-sys-1} has a solution, or there exists $y \in \R^{\lparam + 1}$ such that
	\begin{subequations}\label{eq:fark-family1}
		\begin{align}
			y_0 + \sum\limits_{i = 1}^{\lparam} \aparam_i y_i &\geq 0 \label{eq:fark-sub1} \\
			y_0 - \cparam_i y_i &\geq 0  \qquad i = 1, \ldots, \lparam \label{eq:fark-sub2} \\
			y_0 &< 0. \label{eq:fark-sub3}
		\end{align}
	\end{subequations}
	Assume for contradiction there exists a $y$ satisfying \eqref{eq:fark-family1}. The nonnegativity of $\cparam$, \eqref{eq:fark-sub2}, and \eqref{eq:fark-sub3} imply $y_i < 0$ for all $i = 1, \ldots, \lparam$. The vector $\aparam$ is nonnegative and by assumption sums to a strictly positive value. We conclude $y_0 + \sum_{i = 1}^{\lparam} \aparam_i y_i < 0$, contradicting \eqref{eq:fark-sub1}.
\end{proof}
\begin{proof}[Proof of Theorem~\ref{thm:disjunction}.]
	It suffices to show $\T^\Q = \bigcap_{k \in \Ncup} \Sk^\Q$. If $\N2 = \emptyset$, we have $\T^\Q = \So^\Q$ by \eqref{eq:def-tq} and \eqref{eq:def-s0q}, and the result holds. Therefore, assume $\N2 \neq \emptyset$. By construction, $\T^\Q \subseteq \Sk^\Q$ for all $k \in \Ncup$, implying $\T^\Q \subseteq \cap_{k \in \Ncup} \Sk^\Q$.
	
	Let $\xhat \in \cap_{k \in \Ncup} \Sk^\Q$. By $\xhat$'s membership in $\So^\Q$, there exist $\lambda^0 \in \R^{|\Nonetwo|}_{++}$, $\mu \in \R^{|\N2|}_{++}$, $\delta^0 \in \R_{+}^{|\Nonetwo|}$, and $q^0 \in \recc(\Q)$ such that $\lambda^0_j \in (\enter_j, \exit_j)$ for all $j \in \Nonetwo$, $\sum_{j \in \Nonetwo} \delta^0_j = 1$, and
	\begin{align}\label{eq:proof-xhat-s0}
		\xhat &= \xbasis + \sum\limits_{j \in \N1} \delta^0_j \lambda^0_j \rbar^j + \sum\limits_{j \in \N2} \delta^0_j (\lambda^0_j + \mu_j) \rbar^j + q^0.
	\end{align}
	If $\delta^0_j = 0$ for all $j \in \N2$, then $\xhat \in \{\xbasis\} + \conv( \cup_{j \in \N1} \interval{\enter_j}{\exit_j}{j} ) + \recc(\Q) \subseteq \T^\Q$ by \eqref{eq:proof-xhat-s0} and we have nothing left to prove. We therefore assume $\sum_{j \in \N2} \delta^0_j > 0$. From \eqref{eq:def-skq-alt}, $\xhat \in \cap_{k \in \N2} \Sk^\Q$ implies that for all $k \in \N2$, there exist $\lambda^k \in \R^{|\Nonetwo|}_{++}$, $\eta_k \in \R_{+}$, $\delta^k \in \R_{+}^{|\Nonetwo|}$, and $q^k \in \recc(\Q)$ such that $\lambda^k_j \in (\enter_j, \exit_j)$ for all $j \in \Nonetwo$, $\sum_{j \in \Nonetwo} \delta^k_j = 1$, and
	\begin{align}\label{eq:proof-xhat-sk}
		\xhat &= \xbasis + \mash{\sum\limits_{j \in \Nonetwo}} \ \delta^k_j \lambda^k_j \rbar^j - \eta_k \rbar^k + q^k.
	\end{align}
	Because $\mu \in \R^{|\N2|}_{++}$ and $\sum_{j \in \N2} \delta^0_j > 0$, it holds that $\sum_{j \in \N2} \delta^0_j \mu_j > 0$. We apply Lemma~\ref{lem:farkas-cons-1} with $\lparam \coloneqq |\N2|$, $\aparam_j \coloneqq \delta_j^0 \mu_j$ for $j \in \N2$, and $\cparam_j \coloneqq \eta_j$ for $j \in \N2$. Then there exists $\theta \in \R^{|\N2| + 1}_{+}$ such that $\sum_{j \in \Ncup} \theta_j = 1$ and $\theta_0 \delta^0_k \mu_k = \theta_k \eta_k$ for all $k \in \N2$. We use this $\theta$ as convex combination multipliers on \eqref{eq:proof-xhat-s0} and \eqref{eq:proof-xhat-sk} to rewrite $\xhat$ as
	\begin{align}
		\xhat &= \xbasis + \mash{\sum\limits_{k \in \Ncup }} \ \mash{\sum\limits_{\ j \in \Nonetwo}} \theta_k \delta^k_j \lambda^k_j \rbar^j + \mash{\sum\limits_{k \in \Ncup}} \theta_k q^k. \label{eq:xhat-conv-rep}
	\end{align}
	For every $j \in \Nonetwo$ and $k \in \Ncup$, $\lambda^k_j \rbar^j \in \interval{\enter_j}{\exit_j}{j}$. The coefficients on the terms $\lambda^k_j \rbar^j$ ($j \in \Nonetwo$, $k \in \Ncup$) in \eqref{eq:xhat-conv-rep} are nonnegative and sum to one:
	\begin{align*}
		\mash{\sum\limits_{k \in \Ncup }} \ \mash{\sum\limits_{\ j \in \Nonetwo}} \theta_k \delta^k_j &= \mash{\sum\limits_{k \in \Ncup }} \theta_k \mash{\sum\limits_{j \in \Nonetwo}} \delta^k_j = 1.
	\end{align*}
	It follows that $\xbasis + \sum_{k \in \Ncup} \sum_{j \in \Nonetwo} \theta_k \delta^k_j \lambda^k_j \rbar^j \in \T$. Lastly, we have $\sum_{k \in \Ncup} \theta_k q^k \in \recc(\Q)$. Thus, by \eqref{eq:xhat-conv-rep}, $\xhat \in \T^\Q$.
\end{proof}

The multi-term disjunction \eqref{eq:disjunction} is a generalization of the two-term disjunction of Theorem~\ref{thm:n1-n2}. Recall that this two-term disjunction does not account for the recession structure of $\Q$ beyond the property that $\rbar^j \in \recc(\Q)$ for all $j \in \N1$ and the assumption $\rbar^j \in \N0$ for all $j \in \N0$. If $\Q$ is bounded and Assumption~\ref{ass:1} holds, it can be shown that the multi-term disjunction reduces to the simple two-term disjunction of Theorem~\ref{thm:n1-n2}. In particular, we have
\begin{align*}
	\PB \setminus \So^\Q &= \{x \in \PB \colon \textstyle\sum_{j \in N} x_j / \enter_j \leq 1\} \\
	\PB \setminus \Sk^\Q &= \{x \in \PB \colon \textstyle\sum_{j \in N} x_j / \exit_j \geq 1\} \quad \forall k \in \N2.
\end{align*}
In the remainder of this paper, we derive polyhedral relaxations for each of the terms in the disjunction \eqref{eq:disjunction}. Given a polyhedral relaxation of each disjunctive term, we can obtain valid inequalities for $\P \setminus \Q$ using a disjunctive approach analogous to the method outlined in Remark~\ref{remark:0}.

\begin{remark}
	The multi-term disjunction \eqref{eq:disjunction} for $\P \setminus \Q$ can be extended to the case $\xbasis \in \Q$. Specifically, if $\enter_j = 0$ for all $j \in N$, the set $\So^\Q$ defined in \eqref{eq:def-s0q} contains every point in $\PB$ except for $\xbasis$. Because $\xbasis \in \Q$, we know that $\PB \setminus \So^\Q$ (one of the terms of the disjunction \eqref{eq:disjunction}) is empty.
\end{remark}
\addtocounter{example}{-1}
\begin{contexample}
	Using the two-term disjunction from Section~\ref{sec:cuts-1}, we were unable to derive meaningful cuts for $\P \setminus \Q$ from Example~\ref{ex:useless}. In contrast, Theorem~\ref{thm:disjunction} provides a disjunction for $\P \setminus \Q$. A graphical representation of the relationship $\T^\Q = \bigcap_{k \in \Ncup} \Sk^\Q$ for this example is shown in Figures \ref{fig:tq-skq-2}--\ref{fig:tq-skq-1}. In this example, $|\N0| = |\N2| = 1$. The disjunction of Theorem~\ref{thm:disjunction} can be seen in Figure~\ref{fig:tq-dis-1}.
	\begin{figure}
		\begin{subfigure}[t]{0.3\textwidth}
			\vspace*{0pt}
			\centering
			\includegraphics[width=46mm]{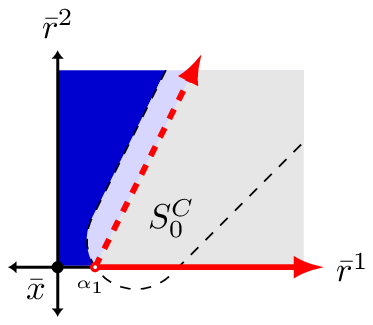}
			\caption{The set $\So^\Q$ for Example~\ref{ex:useless}. This set is one of the terms of the disjunction \eqref{eq:disjunction}.}
			\label{fig:tq-skq-2}
		\end{subfigure}%
		\begin{subfigure}[t]{0.05\textwidth}
			\quad
		\end{subfigure}%
		\begin{subfigure}[t]{0.29\textwidth}
			\vspace*{0pt}
			\centering
			\includegraphics[width=46mm]{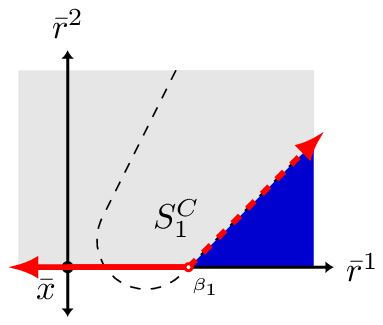}
			\caption{The set $\Sbare_1^\Q$ for Example~\ref{ex:useless}. This set is one of the terms of the disjunction \eqref{eq:disjunction}.}
			\label{fig:tq-skq-3}
		\end{subfigure}%
		\begin{subfigure}[t]{0.05\textwidth}
			\quad
		\end{subfigure}%
		\begin{subfigure}[t]{0.3\textwidth}
			\vspace*{0pt}
			\centering
			\includegraphics[width=46mm]{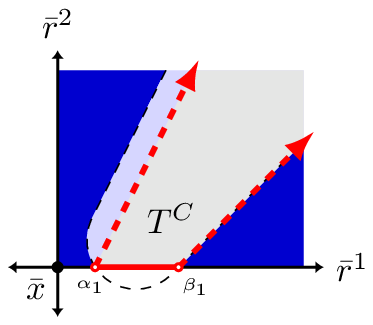}
			\caption{The set $\T^\Q$ is the intersection of $\So^\Q$ and $\Sbare_1^\Q$.}
			\label{fig:tq-skq-1}
		\end{subfigure}
		\caption{Construction of $\T^\Q$ for Example~\ref{ex:useless}.}
	\end{figure}
	
	\begin{figure}[ht]
		\centering
		\includegraphics[width=46mm]{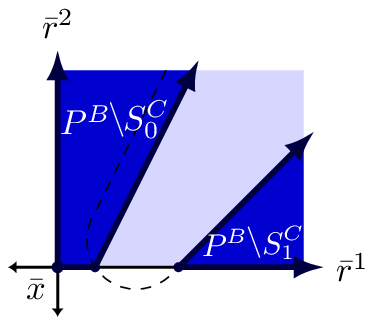}
		\caption{By Theorem~\ref{thm:disjunction}, $(\PB \setminus \So^\Q) \cup (\PB \setminus \Sbare_1^\Q)$ is a relaxation of $\PB \setminus \Q$. These sets are shown for Example~\ref{ex:useless}.}
		\label{fig:tq-dis-1}
	\end{figure}
\end{contexample}
\addtocounter{example}{1}
Based on the disjunction \eqref{eq:disjunction}, the inequalities \eqref{eq:tq-inequality}, which are valid for $\PB \setminus \T^\Q$, are also valid for $\PB \setminus \Sk^\Q$ for all $k \in \Ncup$.

The sets $\PB \setminus \Sk^\Q$, $k \in \Ncup$ are nonconvex in general. In Section~\ref{subsec:skq-cuts-combined}, we derive polyhedral relaxations of these sets. Together, these relaxations form $|\N2| + 1$ polyhedra whose union contains the feasible region $\P \setminus \Q$.

\subsection{Polyhedral relaxation of \texorpdfstring{$\PB \setminus \Sk^\Q$, $k \in \Ncup$}{P\carrot B \back S\under k\carrot C, k \elementof N\subtwo\carrot 0}}\label{subsec:skq-cuts-combined}

In this section, we describe a polyhedral relaxation of the set $\PB \setminus \Sk^\Q$ for $k \in \Ncup$.

To begin, we consider the set $\PB \setminus \So^\Q$. The set $\So^\Q$ is equivalent to $\genset_{\D}^{\Q}$ from Section~\ref{subsec:tq-cuts} when $\D = \Nonetwo$. As such, the theory of Section~\ref{subsec:tq-cuts} can be applied to the specific case $\D = \Nonetwo$ to obtain an exponential family of inequalities for $\PB \setminus \So^\Q$ and a polynomial-size extended formulation of the polyhedron defined by these inequalities.

\addtocounter{example}{-1}
\begin{contexample}
	Let $\D$ from Section~\ref{subsec:tq-cuts} equal $\Nonetwo$. Consider $\P$ and $\Q$ defined in Example~\ref{ex:useless}. Figure~\ref{fig:pb-s0q-1} shows the selection of $\epsone_2(\Sset)$ for $\Sset = \{1\}$. This $\epsone_2(\Sset)$ is then used to construct the inequality of Theorem~\ref{thm:tq-cuts} in Figure~\ref{fig:pb-s0q-2}.
	
	\begin{figure}
			\begin{subfigure}[t]{0.44\textwidth}
				\vspace*{0pt}
				\centering
				\includegraphics[width=46mm]{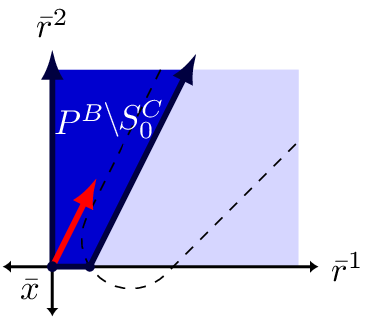}
				\caption{The maximal selection of $\epsone_2(\Sset)$, where $\Sset = \{1\}$. The vector $\enter_1 \rbar^1 + \epsone_2(\Sset) \rbar^2$ is depicted. If the weight on the term $\rbar^2$ were increased any further, the resulting vector would not lie in $\recc(\genset_{\D}^\Q)$.}
				\label{fig:pb-s0q-1}
			\end{subfigure}%
			\begin{subfigure}[t]{0.05\textwidth}
				\quad
			\end{subfigure}%
			\begin{subfigure}[t]{0.44\textwidth}
				\vspace*{0pt}
				\centering
				\includegraphics[width=46mm]{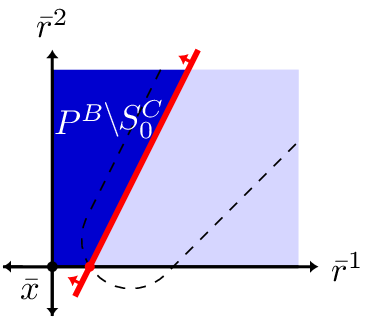}
				\caption{Theorem~\ref{thm:tq-cuts}'s valid inequality, $x_1 / \enter_1 - x_2 / \epsone_2(\Sset) \leq 1$. The corresponding hyperplane $\{x \in \R^2 \colon x_1 / \enter_1 - x_2 / \epsone_2(\Sset) = 1\}$ passes through the point $\xbasis + \enter_1 \rbar^1$. The vector $\enter_1 \rbar^1 + \epsone_2(\Sset) \rbar^2$ lies in the recession cone of this hyperplane.}
				\label{fig:pb-s0q-2}
			\end{subfigure}
		\caption{The valid inequality of Theorem~\ref{thm:tq-cuts} applied to Example~\ref{ex:useless}.}
	\end{figure}
\end{contexample}
\addtocounter{example}{1}

Now, let $k \in \N2$ be fixed. For the remainder of this section, we describe a polyhedral relaxation of $\PB \setminus \Sk^\Q$. Let $\J$ be defined as follows:
\begin{align*}
	\J &\coloneqq \{i \in N \colon \rbar^i \in \recc(\Sk^\Q) \}.
\end{align*}
Because $\recc(\Q) \subseteq \recc(\Sk^\Q)$, we have $\N1 \subseteq \J$.

\begin{observation}\label{obs:recc-skq}
	It holds that $\recc(\Sk^\Q) = \recc(\Q) + \rcinterval{-\infty}{0}{k}$.
\end{observation}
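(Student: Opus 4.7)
The plan is to exploit the Minkowski-sum structure of $\Sk^\Q$ in \eqref{eq:def-skq}. Define
\[
B \coloneqq \{\xbasis\} + \conv \bigl( \textstyle\bigcup_{j \in \N2} \lcinterval{0}{\exit_j}{j} \bigr) \quad\text{and}\quad K \coloneqq \rcinterval{-\infty}{0}{k} + \recc(\Q),
\]
so that $\Sk^\Q = B + K$. Since each $\exit_j$ with $j \in \N2$ is finite, $B$ is bounded; and $K$ is a convex cone, being a Minkowski sum of two convex cones. The observation then reduces to showing $\recc(\Sk^\Q) = K$.

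The inclusion $K \subseteq \recc(\Sk^\Q)$ is immediate. Given $d = q + r$ with $q \in \recc(\Q)$ and $r \in \rcinterval{-\infty}{0}{k}$, and any $x = \xbasis + c + l + q' \in \Sk^\Q$, one computes $x + \lambda d = \xbasis + c + (l + \lambda r) + (q' + \lambda q) \in \Sk^\Q$ for every $\lambda \geq 0$, because $\rcinterval{-\infty}{0}{k}$ and $\recc(\Q)$ are each closed under nonnegative scaling and under addition.

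For the reverse inclusion, I would fix an arbitrary $d \in \recc(\Sk^\Q)$ and some $x_0 \in \Sk^\Q$, and for each $\lambda \geq 0$ decompose $x_0 + \lambda d = \xbasis + c_\lambda + l_\lambda + q_\lambda$ according to the summands of $\Sk^\Q$. Rearranging yields
\[
(l_\lambda + q_\lambda)/\lambda \;=\; d - (\xbasis - x_0)/\lambda - c_\lambda/\lambda.
\]
Because $\{c_\lambda\}_\lambda$ lies in the bounded set $B - \{\xbasis\}$, the right-hand side tends to $d$ as $\lambda \to \infty$. Every term on the left lies in the convex cone $K$, so $d$ lies in the closure of $K$. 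The conclusion $d \in K$ then follows as long as $K$ is closed.

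The main obstacle is precisely this closedness step, since Minkowski sums of two closed convex cones need not be closed in general. The argument rests on the fact that $\rcinterval{-\infty}{0}{k}$ is a polyhedral cone (a ray), and the sum of a polyhedral convex set with any closed convex set is closed (see, e.g., Corollary~9.1.2 in Rockafellar's \emph{Convex Analysis}). Applying this fact to the ray $\rcinterval{-\infty}{0}{k}$ and the closed convex cone $\recc(\Q)$ gives the required closedness of $K$. A minor subtlety is that $B$ need not itself be closed (the intervals $\lcinterval{0}{\exit_j}{j}$ omit their right endpoints), but this plays no role in the argument, which only uses the boundedness of $B$.
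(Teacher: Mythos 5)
The paper records this as an observation without proof, so the only question is whether your argument stands on its own. Its architecture is sound: the forward inclusion, the boundedness of $B$ (from $\exit_j < +\infty$ for $j \in \N2$), and the limiting argument showing $d \in \cl(K)$ are all correct, and you are right that the non-closedness of $B$ is harmless. The problem is the final closedness step, which rests on a false citation. It is not true that the Minkowski sum of a polyhedral convex set with a closed convex set is closed. Take the second-order cone $C = \{(x,y,z) \in \R^3 \colon z \geq \sqrt{x^2+y^2}\}$ and the polyhedral ray $L = \cone((1,0,-1))$: then $(0,1,0) = \lim_{t \to \infty} \big[ (-t,1,\sqrt{t^2+1}) + (t,0,-t) \big] \in \cl(C+L)$, yet $(0,1,0) \in C+L$ would require $t \geq \sqrt{t^2+1}$ for some $t \geq 0$, so $C+L$ is not closed. (Rockafellar's Corollary~9.1.2 addresses the difference of two closed convex sets with no common recession direction, not a polyhedral-plus-closed sum.) The feature driving this counterexample is that the negative of the ray's generator, $(-1,0,1)$, is itself a recession direction of $C$ -- exactly the cancellation phenomenon that a correct closedness argument must rule out.

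Fortunately, that failure mode cannot occur in your setting, and this is the fact your proof is missing. Since $k \in \N2$ we have $\exit_k < +\infty$, which forces $\rbar^k \notin \recc(\Q)$: otherwise the halfline $\{\xbasis\} + \cone(\rbar^k)$, once inside $\Q$ at some $\lambda \in (\enter_k, \exit_k)$, would remain in $\Q$ thereafter and give $\exit_k = +\infty$. Because $\recc(\Q)$ is a cone, it follows that $\recc(\Q) \cap \cone(\rbar^k) = \{0\}$; equivalently, the only solution of $z_1 + z_2 = 0$ with $z_1 \in \recc(\Q)$ and $z_2 \in \rcinterval{-\infty}{0}{k}$ is $z_1 = z_2 = 0$. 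This is precisely the hypothesis of Rockafellar's Corollary~9.1.1 (closedness of a sum of closed convex sets when no nonzero recession directions cancel outside the lineality spaces), which gives that $K = \recc(\Q) + \rcinterval{-\infty}{0}{k}$ is closed. With that substitution for your closedness claim, the rest of your argument goes through as written.
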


\begin{proposition}
	The index $k$ is not in $\J$.
\end{proposition}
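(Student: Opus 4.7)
The plan is to apply Observation~\ref{obs:recc-skq}, which gives $\recc(\Sk^\Q) = \recc(\Q) + \rcinterval{-\infty}{0}{k}$, and then argue by contradiction: if $\rbar^k$ were in this sum, then $\rbar^k$ would have to lie in $\recc(\Q)$, which contradicts $k \in \N2$ (finite $\exit_k$).

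Concretely, suppose for contradiction that $\rbar^k \in \recc(\Sk^\Q)$. By Observation~\ref{obs:recc-skq}, we can write $\rbar^k = q + \mu \rbar^k$ for some $q \in \recc(\Q)$ and some scalar $\mu \leq 0$. Rearranging gives $q = (1 - \mu)\rbar^k$, and since $1 - \mu \geq 1 > 0$ and $\recc(\Q)$ is a cone, this forces $\rbar^k \in \recc(\Q)$.

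It remains to derive a contradiction from $\rbar^k \in \recc(\Q)$ using $k \in \N2$. Since $k \in \N2$, we have $\enter_k < \exit_k < +\infty$, so the open convex set $\Q$ contains some point of the form $\xbasis + \mu_0 \rbar^k$ with $\mu_0 \in (\enter_k, \exit_k)$. Applying the defining property of the recession cone, $\xbasis + \mu_0 \rbar^k + \lambda \rbar^k \in \Q$ for every $\lambda \geq 0$. Choosing $\lambda$ large enough that $\mu_0 + \lambda > \exit_k$ yields a point on the halfline $\{\xbasis\} + \lcinterval{0}{+\infty}{k}$ lying in $\Q$ but beyond $\exit_k$, contradicting the definition $\exit_k \coloneqq \sup\{\exit \geq 0 \colon \xbasis + \exit \rbar^k \in \Q\}$.

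The argument is essentially bookkeeping, so there is no substantive obstacle; the only thing to be careful about is invoking the correct definition of $\recc(\Q)$ for the open convex set $\Q$ (the ``for every $y \in \Q$ and $\lambda \geq 0$, $y + \lambda r \in \Q$'' characterization), which is what allows us to pick the interior witness $\xbasis + \mu_0 \rbar^k \in \Q$ and translate past $\exit_k$.
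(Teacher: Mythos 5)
Your proof is correct and follows essentially the same route as the paper: both invoke Observation~\ref{obs:recc-skq} to write $\rbar^k$ as an element of $\recc(\Q) + \rcinterval{-\infty}{0}{k}$, rearrange to conclude $\rbar^k \in \recc(\Q)$, and then contradict the finiteness of $\exit_k$ coming from $k \in \N2$. You merely spell out the final contradiction (picking an interior witness $\xbasis + \mu_0 \rbar^k \in \Q$ and translating past $\exit_k$) in more detail than the paper does.
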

\begin{proof}
	Assume for contradiction $k \in \J$. By Observation~\ref{obs:recc-skq}, there exists $q \in \recc(\Q)$ and $\lambda \geq 0$ such that $\rbar^k = q - \lambda \rbar^k$, which implies $\rbar^k \in \recc(\Q)$. This is a contradiction; $k \in \N2$, so the halfline $\lcinterval{0}{+\infty}{k}$ extending from $\xbasis$ intersects $\Q$ on a finite interval.
\end{proof}

Proposition~\ref{prop:skq-pb-int} characterizes the points where $\Sk^\Q$ intersects each edge of $\PB$.
\begin{proposition}\label{prop:skq-pb-int}
	Let $j \in N$. If Assumption~\ref{ass:recc} holds, then
	\begin{align*}
		\exitstar_j \coloneqq \sup \{\lambda \geq 0 \colon \xbasis + \lambda \rbar^j \in \Sk^\Q\} &=
		\begin{dcases}
			0 &\textrm{if } j \in \N0 \setminus \J \\
			\exit_j &\textrm{if } j \in \N2 \setminus \J \\
			+\infty &\textrm{if } j \in \J.
		\end{dcases}
	\end{align*}
\end{proposition}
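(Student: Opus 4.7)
The plan is to handle the three cases separately, using in each the explicit form of $\Sk^\Q$ from~\eqref{eq:def-skq}, Observation~\ref{obs:recc-skq}, and the fact that under Assumption~\ref{ass:recc} any $q \in \recc(\Q) \subseteq \recc(\PB)$ has a unique nonnegative expansion $q = \sum_{i \in N} q_i \rbar^i$ by linear independence of $\{\rbar^i \colon i \in N\}$.

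For $j \in \J$, I would first note that $\xbasis \in \Sk^\Q$, since each of $\conv(\bigcup_{j' \in \N2} \lcinterval{0}{\exit_{j'}}{j'})$, $\rcinterval{-\infty}{0}{k}$, and $\recc(\Q)$ contains the origin. Combined with $\rbar^j \in \recc(\Sk^\Q)$ (the defining property of $\J$), this gives $\{\xbasis\} + \lcinterval{0}{+\infty}{j} \subseteq \Sk^\Q$, so $\exitstar_j = +\infty$.

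For $j \in \N2 \setminus \J$, the direction $\exitstar_j \geq \exit_j$ is immediate: for any $\lambda \in [0, \exit_j)$, the representation placing all convex-combination weight on index $j$ with coefficient $\lambda$, $\alpha = 0$, and $q = 0$ witnesses $\xbasis + \lambda \rbar^j \in \Sk^\Q$. For the reverse direction, I would argue by contradiction. Supposing $\xbasis + \lambda \rbar^j \in \Sk^\Q$ with $\lambda > \exit_j$ and unpacking~\eqref{eq:def-skq} yields
\[
q = \lambda \rbar^j + \alpha \rbar^k - \sum_{j' \in \N2} \delta_{j'} \mu_{j'} \rbar^{j'},
\]
where $q \in \recc(\Q)$, $\alpha \geq 0$, $\delta$ is a nonnegative convex-combination weight, and $\mu_{j'} \in [0, \exit_{j'})$. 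Expanding $q = \sum_{i \in N} q_i \rbar^i$ with $q_i \geq 0$ and matching coefficients in the linearly independent basis $\{\rbar^i\}$ forces $\delta_{j'} \mu_{j'} = 0$ for all $j' \in \N2 \setminus \{j,k\}$ and collapses $q$ to $A\rbar^j + B\rbar^k$ (or to a pure positive multiple of $\rbar^k$ in the $j = k$ subcase), where $A$ is strictly positive because $\lambda > \exit_j > \mu_j \geq \delta_j \mu_j$. Solving $\rbar^j = (1/A) q - (B/A) \rbar^k$ and applying Observation~\ref{obs:recc-skq} gives $\rbar^j \in \recc(\Q) + \rcinterval{-\infty}{0}{k} = \recc(\Sk^\Q)$, contradicting $j \in \N2 \setminus \J$.

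For $j \in \N0 \setminus \J$, the same coefficient-matching argument applied to any $\lambda > 0$ works: since $j \notin \N2$ and $j \neq k$, the coefficient of $\rbar^j$ in $q$ is directly $\lambda > 0$, so the same manipulation yields $\rbar^j \in \recc(\Sk^\Q)$ and the same contradiction. Combined with $\xbasis \in \Sk^\Q$ (giving $\exitstar_j \geq 0$), this yields $\exitstar_j = 0$. The main obstacle will be careful coefficient bookkeeping, especially handling the $j = k$ subcase separately within $\N2 \setminus \J$ and extracting exactly enough positivity of the $\rbar^j$ coefficient to express $\rbar^j$ in the form $\recc(\Q) + \rcinterval{-\infty}{0}{k}$ required by Observation~\ref{obs:recc-skq}.
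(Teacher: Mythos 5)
Your proposal is correct and follows essentially the same route as the paper: the identical three-way case split, with the $\N2\setminus\J$ and $\N0\setminus\J$ cases handled by expanding a putative point of $\Sk^\Q$, matching coefficients in the linearly independent basis $\{\rbar^i\}$ under Assumption~\ref{ass:recc} to force most terms to vanish, and invoking Observation~\ref{obs:recc-skq} to contradict $j \notin \J$. Your handling of $j \in \J$ via $\xbasis \in \Sk^\Q$ plus convexity is marginally more direct than the paper's (which routes through $\rbar^j + \lambda\rbar^k \in \recc(\Q)$ and then subtracts $-\rbar^k$), and your phrasing with a witness $\lambda > \exit_j$ rather than the supremum itself is slightly more careful, but these are cosmetic differences.
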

\begin{proof}
	Let $j \in \J$. By Observation~\ref{obs:recc-skq}, there exists $\lambda \geq 0$ such that $\rbar^j + \lambda \rbar^k \in \recc(\Q)$. Consider any $\gamma > 0$. We have $\xbasis + \gamma(\rbar^j + \lambda \rbar^k) \in \Sk^\Q$. Because $-\rbar^k \in \recc(\Sk^\Q)$, we have $\xbasis + \gamma \rbar^j \in \Sk^\Q$. Thus, $\exitstar_j = +\infty$.
	
	Next, let $j \in \N2 \setminus \J$. By the construction of $\Sk^\Q$ in \eqref{eq:def-skq}, $\exitstar_j \geq \exit_j$. Assume for contradiction $\exitstar_j > \exit_j$. There exists $\theta \in \R^{|\N2|}_{+}$, $\delta \in \R^{|\N2|}_{+}$, $\gamma \geq 0$, and $q \in \recc(\Q)$ such that $\sum_{i \in \N2} \theta_i = 1$, $\delta_i \in [0, \exit_i)$ for $i \in \N2$, and
	\begin{align}
		\xbasis + \exitstar_j \rbar^j &= \xbasis + \sum\limits_{i \in \N2} \theta_i \delta_i \rbar^i - \gamma \rbar^k + q \nonumber \\
		\implies q &= \exitstar_j \rbar^j - \sum\limits_{i \in \N2} \theta_i \delta_i \rbar^i + \gamma \rbar^k. \label{eq:x-skq-cone}
	\end{align}
	Observe $\theta_i \delta_i = 0$ for all $i \in \N2 \setminus \{j,k\}$ and $\gamma \geq \theta_k \delta_k$; if not, $q \notin \recc(\PB)$ from \eqref{eq:x-skq-cone}, contradicting Assumption~\ref{ass:recc}. Therefore,
	\begin{align*}
		q &= (\exitstar_j - \theta_j \delta_j) \rbar^j + (\gamma - \theta_k \delta_k) \rbar^k.
	\end{align*}
	Because $\rbar^k \in \recc(\Sk^\Q)$, we have $q - (\gamma - \theta_k \delta_k) \rbar^k = (\exitstar_j - \theta_j \delta_j) \rbar^j \in \recc(\Sk^\Q)$. This contradicts $j \notin \J$.
	
	Finally, let $j \in \N0 \setminus \J$. Assume for contradiction $\exitstar_j > 0$. We follow the definitions in the previous case ($j \in \N2 \setminus \J$) to obtain
	\begin{align*}
		q &= \exitstar_j \rbar^j + (\gamma - \theta_k \delta_k) \rbar^k.
	\end{align*}
	Again, we obtain $\exitstar_j \rbar^j \in \recc(\Sk^\Q)$, contradicting $j \notin \J$.
\end{proof}

The proof of Proposition~\ref{prop:skq-pb-int} shows that without Assumption~\ref{ass:recc}, it may be the case that $\exitstar_j > \exit_j$ for some $j \in N$. This is due to the addition of $\rcinterval{-\infty}{0}{k}$ to $\recc(\Q)$.

Corollary~\ref{cor:n0-j} follows from Proposition~\ref{prop:skq-pb-int}.
\begin{corollary}\label{cor:n0-j}
	If $\N0 \subseteq \J$, then there exists $\epsilon > 0$ such that $\xbasis + \epsilon \rbar^j \in \Sk^\Q$ for all $j \in N$.
\end{corollary}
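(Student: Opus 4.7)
The plan is to derive the corollary as a direct finitary consequence of Proposition~\ref{prop:skq-pb-int} together with the observation (already recorded in its proof) that, for indices in $\J$, the supremum defining $\exitstar_j$ is in fact attained for every positive $\gamma$. I would begin by partitioning $N$ as $\N0 \cup \N1 \cup \N2$ and noting that $\N1 \subseteq \J$ always holds and, by assumption, $\N0 \subseteq \J$ as well; hence $N \setminus \J \subseteq \N2$. Because we have shown $k \notin \J$, the set $\N2 \setminus \J$ is nonempty but still finite.

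Next I would handle the two cases. For $j \in \J$, the argument inside the proof of Proposition~\ref{prop:skq-pb-int} shows that $\xbasis + \gamma \rbar^j \in \Sk^\Q$ for \emph{every} $\gamma > 0$, so any prospective $\epsilon$ works on these coordinates. For $j \in \N2 \setminus \J$, the definition of $\Sk^\Q$ in \eqref{eq:def-skq} places $\lambda \rbar^j$ inside the convex hull $\conv(\bigcup_{i \in \N2} \lcinterval{0}{\exit_i}{i})$ for every $\lambda \in [0,\exit_j)$, so $\xbasis + \lambda \rbar^j \in \Sk^\Q$ for all such $\lambda$.

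To conclude, I would set
\begin{align*}
\epsilon \coloneqq \tfrac{1}{2}\min_{j \in \N2 \setminus \J} \exit_j,
\end{align*}
which is strictly positive since $\N2 \setminus \J$ is finite and every $\exit_j$ with $j \in \N2$ satisfies $\exit_j > \enter_j > 0$. By the two cases above, $\xbasis + \epsilon \rbar^j \in \Sk^\Q$ for all $j \in \J$ (trivially) and for all $j \in \N2 \setminus \J$ (since $\epsilon < \exit_j$). As $N = \J \cup (\N2 \setminus \J)$ under the hypothesis $\N0 \subseteq \J$, this establishes the claim.

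There is no real obstacle here, since the work has been done in Proposition~\ref{prop:skq-pb-int}; the only subtlety is being careful that the supremum $\exitstar_j$ is \emph{attained} for $j \in \J$ (so that $\exitstar_j = +\infty$ can actually be used to produce a witness for any positive $\epsilon$), which is immediate from the constructive proof of that proposition, and that $\epsilon$ must be chosen uniformly over the finite set $\N2 \setminus \J$.
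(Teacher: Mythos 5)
Your argument is correct and matches the paper's route: the paper offers no separate proof, simply noting that the corollary follows from Proposition~\ref{prop:skq-pb-int}, and your write-up is exactly the elaboration of that deduction (for $j \in \J$ every positive step length works, for $j \in \N2 \setminus \J$ any $\lambda < \exit_j$ works by the definition of $\Sk^\Q$, and $\N0 \setminus \J = \emptyset$ by hypothesis, so a uniform $\epsilon$ exists by finiteness of $N$). The only nitpick is cosmetic: for $j \in \J$ the issue is not that the supremum $\exitstar_j = +\infty$ is ``attained'' but that membership holds for every $\gamma > 0$, which the constructive argument in that proposition's proof (or convexity of $\Sk^\Q$ together with $\xbasis \in \Sk^\Q$) indeed delivers.
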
 
By Corollary~\ref{cor:n0-j}, if $\N0 \subseteq \J$, $\xbasis$ lies in the relative interior of $\Sk^\Q$. We can construct a polyhedral relaxation of $\PB \setminus \Sk^\Q$ by using intersection cuts generated by the cone $\PB$. Methods for strengthening intersection cuts (e.g., \citet{glover1974}) can be used to obtain a strengthened polyhedral relaxation. For this reason, we present inequalities only for the case $\N0 \nsubseteq \J$.

\begin{assumption}\label{ass:2}
	There exists $j \in \N0$ such that $\rbar^j \notin \recc(\Sk^\Q)$, i.e., $\N0 \nsubseteq \J$.
\end{assumption}
For $i \in \J$ and $j \in N \setminus \J$, let
\begin{align*}
	\epshatthree_{ij} &\coloneqq \sup \{ \epshatthree \geq 0 \colon  \rbar^i + \epshatthree \rbar^j \in \recc(\Sk^\Q) \}.
\end{align*}
We define $\Msetthree$ to be the indices of $\J$ that satisfy the following property:
\begin{align*}
	\Msetthree &\coloneqq\{i \in \J \colon \epshatthree_{ij} > 0\ \forall j \in N \setminus \J \}.
\end{align*}
For any $i \in \Msetthree$ and $j \in N \setminus \J$, $\recc(\Sk^\Q)$ intersected with the cone $\F_{ij}$ contains something other than the trivial directions $\lcinterval{0}{+\infty}{i} \subseteq \recc(\Sk^\Q)$.

The proof of Proposition~\ref{prop:epshatthree-recc} is similar to that of Proposition~\ref{prop:epshat-recc}.
\begin{proposition}\label{prop:epshatthree-recc}
	Let $(i,j) \in \Msetthree \times (N \setminus \J)$. For any $\epshatthree \in [0,\epshatthree_{ij})$, we have $\rbar^i + \epshatthree \rbar^j \in \recc(\Sk^\Q)$.
\end{proposition}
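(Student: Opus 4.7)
The plan is to mirror the proof of Proposition~\ref{prop:epshat-recc} almost verbatim, since the two statements have exactly the same structural form: each asserts that a particular direction of the form (fixed base ray) + $\epshat \cdot$(new ray) lies in a recession cone for every $\epshat$ strictly less than the corresponding supremum. The two facts that let me transport the earlier argument are (i) $\rbar^i \in \recc(\Sk^\Q)$ for $i \in \Msetthree$, which is immediate from $\Msetthree \subseteq \J = \{i \in N \colon \rbar^i \in \recc(\Sk^\Q)\}$; and (ii) $\recc(\Sk^\Q)$ is a closed convex cone, which follows from Observation~\ref{obs:recc-skq} together with the standard fact that the recession cone of an open convex set coincides with that of its closure and hence is closed.

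First I would dispose of the case $\epshatthree_{ij} = +\infty$. Here the definition of $\epshatthree_{ij}$ gives a sequence of scalars $t_n \to +\infty$ with $\rbar^i + t_n \rbar^j \in \recc(\Sk^\Q)$; dividing by $t_n$ and invoking closedness of $\recc(\Sk^\Q)$ yields $\rbar^j \in \recc(\Sk^\Q)$. Since $\rbar^i \in \recc(\Sk^\Q)$ as noted above, the nonnegative combination $\rbar^i + \epshatthree \rbar^j$ lies in $\recc(\Sk^\Q)$ for every $\epshatthree \geq 0$.

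Next I would handle the case $\epshatthree_{ij} < +\infty$. Closedness of the cone ensures the supremum in the definition of $\epshatthree_{ij}$ is attained, so $\rbar^i + \epshatthree_{ij} \rbar^j \in \recc(\Sk^\Q)$. For any $\epshatthree \in [0,\epshatthree_{ij})$ I would write
\begin{align*}
    \rbar^i + \epshatthree \rbar^j &= \frac{\epshatthree}{\epshatthree_{ij}}\bigl(\rbar^i + \epshatthree_{ij} \rbar^j\bigr) + \Bigl(1 - \frac{\epshatthree}{\epshatthree_{ij}}\Bigr) \rbar^i,
\end{align*}
exhibiting the vector as a nonnegative combination of two elements of $\recc(\Sk^\Q)$ and hence an element of $\recc(\Sk^\Q)$ itself.

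There is no genuinely hard step here; the argument is essentially a carbon copy of Proposition~\ref{prop:epshat-recc} with $\enter_i \rbar^i$ replaced by $\rbar^i$ and $\recc(\genset_\D^\Q)$ replaced by $\recc(\Sk^\Q)$. The only subtlety worth double-checking is that $\rbar^i$ itself, rather than just a positive scalar multiple of it, belongs to $\recc(\Sk^\Q)$; this is built into the definition of $\J$ and is the sole reason I need the hypothesis $i \in \Msetthree \subseteq \J$ (rather than the analogue of $i \in \D$ from Section~\ref{subsec:tq-cuts}).
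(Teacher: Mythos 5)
Your proof is correct and follows the same route the paper intends: the paper omits the proof, stating only that it "mirrors" Proposition~\ref{prop:epshat-recc}, and your two-case argument (nonnegative combination when $\epshatthree_{ij}=+\infty$, convex combination with the attained supremum point when $\epshatthree_{ij}<+\infty$) is exactly that earlier proof transported, with the one genuinely needed new ingredient correctly identified, namely that $\rbar^i\in\recc(\Sk^\Q)$ because $\Msetthree\subseteq\J$.
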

For $\Sset \subseteq \Msetthree$ and $j \in N \setminus \J$, define $\epsthree_j(\Sset)$ to be
\begin{align*}
	\epsthree_j(\Sset) &=
	\begin{cases}
		\min_{i \in \Sset} \epshatthree_{ij} & \textrm{ if } \Sset \neq \emptyset \\
		+\infty & \textrm{ otherwise}.
	\end{cases}
\end{align*}
By Proposition~\ref{prop:epshatthree-recc}, if $\Sset \neq \emptyset$, $\rbar^i + \epsthree_j(\Sset) \rbar^j \in \recc(\Sk^\Q)$ for all pairs $(i,j) \in \Sset \times (N \setminus \J)$.
\begin{theorem}\label{thm:skq-cuts}
	Let $\Sset \subseteq \Msetthree$. The inequality
	\begin{align}
		\sum\limits_{i \in U} x_i - \mash{\sum\limits_{j \in N \setminus \J}} \ \frac{x_j}{\epsthree_j(\Sset)} &\leq 0 \label{eq:skq-inequality}
	\end{align}
	is valid for $\PB \setminus \Sk^\Q$.
\end{theorem}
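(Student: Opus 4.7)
The plan is to mirror the structure of the proof of Theorem~\ref{thm:tq-cuts}, using $\Sk^\Q$ in place of $\genset_{\D}^\Q$. The trivial case $\Sset = \emptyset$ reduces the inequality to $0 \le 0$, so I would immediately dispose of it. For $\Sset \neq \emptyset$, I first observe that the set $\helperc \coloneqq \{j \in N \setminus \J \colon \epsthree_j(\Sset) = +\infty\}$ is empty: if $\epsthree_j(\Sset) = +\infty$ with $\Sset \neq \emptyset$, then $\epshatthree_{ij} = +\infty$ for all $i \in \Sset$, and closedness of the cone $\recc(\Sk^\Q)$ forces $\rbar^j \in \recc(\Sk^\Q)$, contradicting $j \notin \J$. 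Hence $\helper \coloneqq \{j \in N \setminus \J \colon \epsthree_j(\Sset) < +\infty\}$ equals $N \setminus \J$.

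Next I would take a candidate $\xhat \in \PB$ that violates \eqref{eq:skq-inequality} strictly, i.e., $\sum_{i \in \Sset} \xhat_i - \sum_{j \in N \setminus \J} \xhat_j / \epsthree_j(\Sset) > 0$, and show $\xhat \in \Sk^\Q$. Applying Lemma~\ref{lem:farkas-cons-2} with $\indone \coloneqq \Sset$, $\indtwo \coloneqq N \setminus \J$, $\aparam_i \coloneqq \xhat_i$, and $\cparam_j \coloneqq \xhat_j / \epsthree_j(\Sset)$ yields nonnegative multipliers $\theta_{ij}$ with $\sum_{i \in \Sset} \theta_{ij} = 1$ for each $j \in N \setminus \J$ and $\sum_{j \in N \setminus \J} (\xhat_j / \epsthree_j(\Sset)) \theta_{ij} \leq \xhat_i$ for each $i \in \Sset$. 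By Proposition~\ref{prop:epshatthree-recc}, the vector $q^{ij} \coloneqq \rbar^i + \epsthree_j(\Sset) \rbar^j$ lies in $\recc(\Sk^\Q)$ for every $(i,j) \in \Sset \times (N \setminus \J)$, which lets me write $\rbar^j = (1/\epsthree_j(\Sset))(q^{ij} - \rbar^i)$.

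I would then substitute this expression for $\rbar^j$, $j \in N \setminus \J$, into the standard expansion $\xhat = \xbasis + \sum_{j \in N} \xhat_j \rbar^j$, weighted by the multipliers $\theta_{ij}$, to obtain
\begin{align*}
\xhat = \xbasis + \sum_{i \in \Sset} \Big[\xhat_i - \mash{\sum_{j \in N \setminus \J}}\theta_{ij} \frac{\xhat_j}{\epsthree_j(\Sset)}\Big] \rbar^i + \sum_{i \in \Sset}\mash{\sum_{j \in N \setminus \J}} \theta_{ij}\frac{\xhat_j}{\epsthree_j(\Sset)} q^{ij} + \mash{\sum_{j \in \J \setminus \Sset}} \xhat_j \rbar^j.
\end{align*}
The bracketed coefficients are nonnegative by the Farkas inequality, and $\rbar^i \in \recc(\Sk^\Q)$ for $i \in \Sset \subseteq \Msetthree \subseteq \J$. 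The $q^{ij}$ are in $\recc(\Sk^\Q)$ by construction, and $\rbar^j \in \recc(\Sk^\Q)$ for $j \in \J \setminus \Sset$. Therefore $\xhat - \xbasis \in \recc(\Sk^\Q)$.

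To conclude, I note that $\xbasis \in \Sk^\Q$: taking $\lambda = 0$ in the convex-hull term of \eqref{eq:def-skq} and the zero elements of $\rcinterval{-\infty}{0}{k}$ and $\recc(\Q)$ gives $\xbasis$ itself. Since $\Sk^\Q$ is convex, $\Sk^\Q + \recc(\Sk^\Q) = \Sk^\Q$, so $\xhat \in \Sk^\Q$, as needed. The main subtlety to get right is the edge case handling (the collapse $\helperc = \emptyset$ under Assumption~\ref{ass:2}, and checking that $\xbasis \in \Sk^\Q$), since without those observations the substitution argument would not close; the Farkas-based bookkeeping is essentially identical to that of Theorem~\ref{thm:tq-cuts}, only with the cleaner target membership $\xbasis + \recc(\Sk^\Q) \subseteq \Sk^\Q$ replacing the convex-combination calculation.
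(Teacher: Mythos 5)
Your proof is correct and follows essentially the same route as the paper's: the trivial case $\Sset = \emptyset$, Lemma~\ref{lem:farkas-cons-2} applied with $\aparam_i = \xhat_i$ and $\cparam_j = \xhat_j/\epsthree_j(\Sset)$, the rewriting of $\rbar^j$ via $q^{ij} = \rbar^i + \epsthree_j(\Sset)\rbar^j \in \recc(\Sk^\Q)$ from Proposition~\ref{prop:epshatthree-recc}, and the conclusion $\xhat \in \{\xbasis\} + \recc(\Sk^\Q) \subseteq \Sk^\Q$. Your two explicit side-checks (that $\epsthree_j(\Sset)$ is finite for $j \in N \setminus \J$ when $\Sset \neq \emptyset$, which follows from $j \notin \J$ and closedness of the cone rather than from Assumption~\ref{ass:2}, and that $\xbasis \in \Sk^\Q$) are left implicit in the paper but are correct and harmless additions.
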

\begin{proof}
	Assume $\Sset \neq \emptyset$, or the result trivially holds. By construction, $\epsthree_j(\Sset) > 0$. Let $\xhat \in \PB$ satisfy $\sum_{i \in U} \xhat_i- \sum_{j \in N \setminus \J} \xhat_j / \epsthree_j(\Sset) > 0$. We show $\xhat \in \Sk^\Q$. For ease of notation, let $\epsthree_j \coloneqq \epsthree_j(\Sset)$.
	
	By Proposition~\ref{prop:epshatthree-recc}, for $(i,j) \in \Sset \times (N \setminus \J)$, there exists $q^{ij} \in \recc(\Sk^\Q)$ such that $q^{ij} = \rbar^i + \epsthree_j \rbar^j$. Then
	\begin{alignat*}{2}
		\rbar^j &= \frac{1}{\epsthree_j} q^{ij} - \frac{1}{\epsthree_j} \rbar^i \qquad &&\forall i \in \Sset,\ j \in N \setminus \J.
	\end{alignat*}
	By Lemma~\ref{lem:farkas-cons-2}, there exists $\theta \in \R_{+}^{|\Sset| \times |N \setminus \J|}$ such that
	\begin{subequations}\label{eq:farkas-in-skq}
		\begin{alignat}{2}
			\sum\limits_{i \in \Sset} \theta_{ij} &= 1 \qquad && \forall j \in N \setminus \J \label{eq:farkas-in-skq-1} \\
			\mash{\sum\limits_{j \in N \setminus \J}} \ \theta_{ij} \frac{\xhat_j}{\epsthree_j} &\leq \xhat_i && \forall i \in \Sset. \label{eq:farkas-in-skq-2}
		\end{alignat}
	\end{subequations}
	This result is obtained with $\indone \coloneqq \Sset$, $\indtwo \coloneqq N \setminus \J$, $\aparam_i \coloneqq \xhat_i$ for all $i \in \Sset$, and $\cparam_j \coloneqq \xhat_j / \epsthree_j$ for all $j \in N \setminus \J$. With the $\theta$ satisfying \eqref{eq:farkas-in-skq}, we have
	\begin{align}
		\rbar^j &= \sum\limits_{i \in \Sset} \theta_{ij} \bigg( \frac{1}{\epsthree_j} q^{ij} - \frac{1}{\epsthree_j} \rbar^i \bigg) \qquad \forall j \in N \setminus \J. \label{eq:new-rbarj}
	\end{align}
	Using \eqref{eq:new-rbarj}, $\xhat$ is equivalent to
	\begin{align*}
		\xhat &= \xbasis + \sum\limits_{i \in \Sset} \xhat_i \rbar^i + \mash{\sum\limits_{i \in \J \setminus \Sset}} \xhat_i \rbar^i
		+ \mash{\sum\limits_{j \in N \setminus \J}} \xhat_j \rbar^j \\
		&= \xbasis + \sum\limits_{i \in \Sset} \xhat_i \rbar^i + \mash{\sum\limits_{j \in N \setminus \J}} \xhat_j \sum\limits_{i \in \Sset} \theta_{ij} \bigg( \frac{1}{\epsthree_j} q^{ij} - \frac{1}{\epsthree_j} \rbar^i \bigg) + \mash{\sum\limits_{i \in \J \setminus \Sset}} \xhat_i \rbar^i \\
		&= \xbasis + \sum\limits_{i \in \Sset} \bigg( \xhat_i - \mash{\sum\limits_{j \in N \setminus \J}} \theta_{ij} \frac{\xhat_j}{\epsthree_j} \bigg) \rbar^i + \mash{\sum\limits_{j \in N \setminus \J}} \enspace \sum\limits_{i \in \Sset} \theta_{ij} \frac{\xhat_j}{\epsthree_j} q^{ij} + \mash{\sum\limits_{i \in \J \setminus \Sset}} \xhat_i \rbar^i.
	\end{align*}
	By \eqref{eq:farkas-in-skq-2}, the coefficients on the terms $\rbar^i$, $i \in \Sset$ are nonnegative. Observe that
	\begin{alignat*}{2}
		\bigg( \xhat_i - \mash{\sum\limits_{j \in N \setminus \J}} \theta_{ij} \frac{\xhat_j}{\epsthree_j} \bigg) \rbar^i &\in \recc(\Sk^\Q) \qquad && \forall i \in \Sset \\
		\theta_{ij} \frac{\xhat_j}{\epsthree_j} q^{ij} &\in \recc(\Sk^\Q) && \forall i \in \Sset,\ j \in N \setminus \J \\
		\xhat_i \rbar^i &\in \recc(\Sk^\Q) && \forall i \in \J \setminus \Sset.
	\end{alignat*}
	It follows that $\xhat \in \{\xbasis\} + \recc(\Sk^\Q) \subseteq \Sk^\Q$.
\end{proof}

We next consider the separation problem for $\skrelax \supseteq \PB \setminus \Sk^\Q$, where
\begin{align*}
	\skrelax &\coloneqq \bigg\lbrace x \in \R^{|N|}_{+} \colon \sum\limits_{i \in U} x_i - \mash{\sum\limits_{j \in N \setminus \J}} \ \frac{x_j}{\epsthree_j(\Sset)} \leq 0 \ \forall \Sset \subseteq \Msetthree \bigg\rbrace.
\end{align*}
In particular, given some $\xhat \in \PB$, we are interested in finding a subset of $\Msetthree$ that maximizes the violation of an inequality of the form \eqref{eq:skq-inequality}:
\begin{align}
	\max_{\Sset \subseteq \Msetthree}\sum\limits_{i \in U} \xhat_i - \mash{\sum\limits_{j \in N \setminus \J}} \ \frac{\xhat_j}{\epsthree_j(\Sset)}. \label{eq:skq-separation}
\end{align}
\begin{proposition}\label{prop:skq-supermodular}
	The separation problem \eqref{eq:skq-separation} is a supermodular maximization problem.
\end{proposition}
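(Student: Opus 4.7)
The plan is to mirror the proof of Proposition~\ref{prop:tq-separation} almost verbatim, since the objective of the separation problem \eqref{eq:skq-separation} has exactly the same ``linear-plus-negative-reciprocal-of-min'' structure as that of \eqref{eq:tq-separation}. First, I would introduce the analogue of $\funcone_j$: define $\functhree_j \colon 2^{\Msetthree} \to \R$ by $\functhree_j(\Sset) = -\xhat_j / \min_{i \in \Sset} \epshatthree_{ij}$ when $\Sset \neq \emptyset$ and $\functhree_j(\emptyset) = 0$. The objective of \eqref{eq:skq-separation} can then be rewritten as
\begin{align*}
\max_{\Sset \subseteq \Msetthree} \sum_{i \in \Sset} \xhat_i + \mash{\sum_{j \in N \setminus \J}} \functhree_j(\Sset),
\end{align*}
where I am treating the ``$\sum_{i \in U}$'' in \eqref{eq:skq-inequality} as $\sum_{i \in \Sset}$ (matching the hypothesis $\Sset \subseteq \Msetthree$ of Theorem~\ref{thm:skq-cuts}).

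The first term is a modular function of $\Sset$, so it suffices to show each $\functhree_j$ is supermodular, as the class of supermodular functions is closed under nonnegative sums. Since $\xhat_j \geq 0$, this in turn reduces to supermodularity of $\Sset \mapsto -1/\min_{i \in \Sset} \epshatthree_{ij}$ (with value $0$ at $\emptyset$), which is a standard consequence of the properties of the $\min$ operator on positive reals: for nonempty $A, B$ with $\min_A \epshatthree_{\cdot j} \leq \min_B \epshatthree_{\cdot j}$, one has $\min_{A \cup B} = \min_A$ and $\min_{A \cap B} \geq \min_B$, so
\begin{align*}
-\tfrac{1}{\min_A} - \tfrac{1}{\min_B} \;\leq\; -\tfrac{1}{\min_{A\cup B}} - \tfrac{1}{\min_{A \cap B}},
\end{align*}
and the cases involving the empty set are immediate from the convention $\functhree_j(\emptyset)=0$ together with non-increasingness of $\functhree_j$ on nonempty sets.

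There is no real obstacle here; the only subtle point is handling the $\emptyset$ boundary case, where one must verify the supermodular inequality separately and confirm the stated convention is consistent. After that, concluding supermodularity of the full objective is immediate, just as in the proof of Proposition~\ref{prop:tq-separation}, and a one-line citation of \citet{grotschel1981,grotschel2012,orlin2009} would again yield strongly polynomial solvability as an immediate corollary.
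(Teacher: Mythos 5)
Your proposal is correct and matches the paper's intent: the paper omits a proof for this proposition precisely because it mirrors the proof of Proposition~\ref{prop:tq-separation}, which is the argument you reproduce (rewriting the objective as a modular term plus a sum of functions of the form $-\xhat_j/\min_{i\in\Sset}\epshatthree_{ij}$ and invoking supermodularity of the latter via properties of the $\min$ operator). Your explicit verification of the supermodular inequality, including the empty-set boundary case and the reading of $\sum_{i\in U}$ as $\sum_{i\in\Sset}$, is more detailed than the paper's one-line appeal to ``standard properties of the $\min$ operator'' but is the same approach.
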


Similar to the derivation of $\tqextended_{\D}$ in Section~\ref{subsec:tq-cuts}, we derive an extended formulation for the relaxation of $\PB \setminus \Sk^\Q$ defined by inequality \eqref{eq:skq-inequality} for all $\Sset \subseteq \Msetthree$. Let $\Msetthree = \{1, \ldots, \skcardone\}$, where $\skcardone \coloneqq |\Msetthree|$. Let $\skcardtwo \coloneqq |N \setminus \J|$. For all $j \in N \setminus \J$, let $\pi_j(1), \pi_j(2), \ldots, \pi_j(\skcardone)$ be ordered to satisfy $\epshatthree_{\pi_j(1),j} \leq \epshatthree_{\pi_j(2),j} \leq \ldots \leq \epshatthree_{\pi_j(\skcardone),j}$. For $i \in \Msetthree$, let $\ellone_j(i)$ be the unique integer satisfying $\pi_j(\ellone_j(i)) = i$. For all $j \in N \setminus \J$, let $\epshatthree_{0j} \coloneqq +\infty$, $\theta_{0j} \coloneqq 0$, $v_{0j} \coloneqq 0$, $v_{\skcardone+1,j} \coloneqq 0$, $\pi_j(0) \coloneqq 0$, and  $\pi_j(\skcardone+1) \coloneqq 0$. We define $\skextended$ to be the set of $(x,\theta,v,\lambda) \in \R_{+}^{|N|} \times \R_{+}^{\skcardone \times \skcardtwo} \times \R_{+}^{\skcardone \times \skcardtwo} \times \R^{\skcardtwo}$ such that
\begin{equation*}
	\begin{alignedat}{2}
		\sum\limits_{i \in \Msetthree} \enspace\ \mash{\sum\limits_{j \in N \setminus \J}} \theta_{ij} + \mash{\sum\limits_{j \in N \setminus \J}} \lambda_j &\leq 0 \\
		\theta_{ij} + v_{ij} - v_{i+1,j} + \bigg( \frac{1}{\epshatthree_{\pi_j(i+1),j}} - \frac{1}{\epshatthree_{\pi_j(i),j}} \bigg) x_j &\geq 0 \quad && \forall i = 0, \ldots, \skcardone,\ j \in N \setminus \J \\
		\mash{\sum\limits_{j \in N \setminus \J}} \theta_{\ellone_j(i),j} - x_i &\geq 0 \quad && \forall i = 1, \ldots, \skcardone.
	\end{alignedat}
\end{equation*}

\begin{theorem}\label{thm:extendedform3}
	It holds that $\proj_x(\skextended) = \skrelax$.
\end{theorem}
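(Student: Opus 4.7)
The plan is to mirror the proof of Theorem~\ref{thm:extendedform1} almost verbatim, with the few coefficient changes that reflect the differences between inequality \eqref{eq:tq-inequality} and inequality \eqref{eq:skq-inequality}. First, I would set up the analogue of the separation linear program \eqref{eq:sepip}, namely
\begin{align*}
\max_{y,z}\ & \sum_{i \in \Msetthree} \xhat_i z_i - \mash{\sum_{j \in N \setminus \J}}\ \sum_{i=1}^{\skcardone} \frac{\xhat_j}{\epshatthree_{\pi_j(i),j}}(y_{i-1,j} - y_{ij}) \\
\textrm{s.t.}\ & y_{0j} = 1,\ y_{ij} + z_{\pi_j(i)} \leq 1,\ y_{ij} - y_{i-1,j} \leq 0,\ y_{ij} \geq 0,\ z_i \geq 0,
\end{align*}
dualizing the three inequality blocks by $\lambda_j$, $\theta_{ij}$, $v_{ij}$ respectively. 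These are exactly the dual variables appearing in the definition of $\skextended$, and the key difference from Theorem~\ref{thm:extendedform1} is that the coefficient on $z_i$ in the primal objective is $\xhat_i$ instead of $\xhat_i/\enter_i$, which is precisely why the last block of $\skextended$ has the constraint $\sum_{j}\theta_{\ellone_j(i),j} - x_i \geq 0$ rather than involving a division by $\enter_i$.

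Next, I would verify the primal LP above correctly models the combinatorial separation problem \eqref{eq:skq-separation}. Total unimodularity of its constraint matrix follows by complementing each $z_i$ with $1 - z_i$, as in the proof of Theorem~\ref{thm:extendedform1}: the resulting matrix has $0,\pm 1$ entries with at most one $+1$ and one $-1$ per row, hence the LP admits a $0$--$1$ valued optimal solution. The correspondence between a candidate $\Sset \subseteq \Msetthree$ and a feasible $(y,z)$ is the same: set $z_i = \mathbb{1}[i \in \Sset]$, and for each $j \in N \setminus \J$ choose $k_j \coloneqq \min\{k \colon \epshatthree_{\pi_j(k),j} = \epsthree_j(\Sset)\}$ and set $y_{ij} = 1$ for $i < k_j$, $y_{ij} = 0$ for $i \geq k_j$; conversely, any $0$--$1$ feasible solution induces some $\Sset = \{i \colon z_i = 1\}$ and index $k_j$ for each $j$, with the LP objective dominated by the separation objective at $\Sset$ because constraints $y_{ij} + z_{\pi_j(i)} \leq 1$ force $\arg\min\{i \in \Msetthree \colon z_i = 1\} \geq k_j$, whence $\epsthree_j(\Sset) \geq \epshatthree_{\pi_j(k_j),j}$.

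Finally, I would invoke strong LP duality exactly as in the proof of Theorem~\ref{thm:extendedform1}. The point $\xhat \in \PB$ lies in $\skrelax$ if and only if the separation problem has optimal value at most $0$, which by strong duality is equivalent to the existence of dual variables $(\theta,v,\lambda)$ satisfying all dual constraints together with the dual objective cut $\sum_{i \in \Msetthree}\sum_{j \in N \setminus \J}\theta_{ij} + \sum_{j \in N \setminus \J}\lambda_j \leq 0$. The threshold $0$ on the right-hand side (rather than $1$ as in Theorem~\ref{thm:extendedform1}) reflects that \eqref{eq:skq-inequality} has right-hand side $0$. Replacing the parameter $\xhat$ with a free variable $x \in \R^{|N|}_{+}$ gives precisely the description of $\skextended$, proving $\proj_x(\skextended) = \skrelax$.

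I do not anticipate any serious obstacle, because every step is parallel to the proof of Theorem~\ref{thm:extendedform1}; the only thing to be careful about is bookkeeping on the constant terms, specifically tracking how the right-hand side $0$ in \eqref{eq:skq-inequality} and the unit coefficient on $x_i$ propagate through the dual, which is the sole reason $\skextended$ differs from $\tqextended_{\D}$ in exactly those two coefficients.
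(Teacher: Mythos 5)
Your proposal is correct and is exactly the argument the paper intends: the paper omits the proof of Theorem~\ref{thm:extendedform3} precisely because it ``mirrors that of Theorem~\ref{thm:extendedform1},'' and you have carried out that mirroring faithfully, correctly identifying the only two changes (the coefficient $\xhat_i$ in place of $\xhat_i/\enter_i$, and the dual-objective threshold $0$ in place of $1$) and why they propagate to the two corresponding coefficients of $\skextended$. No gaps.
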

The proof of Theorem~\ref{thm:extendedform3} is left out, because it mirrors that of Theorem~\ref{thm:extendedform1}. We can use the extended formulation $\proj_x(\skextended)$ to construct a polyhedral relaxation of $\PB \setminus \Sk^\Q$ from the multi-term disjunction \eqref{eq:disjunction}.

The nontrivial inequalities of Theorem~\ref{thm:skq-cuts} are predicated on the existence of a nonempty $\Sset \subseteq \Msetthree$. We end this section by stating that if no such subset exists (i.e., $\Msetthree = \emptyset$), then no nontrivial inequalities exist for $\PB \setminus \Sk^\Q$.
\begin{proposition}\label{prop:msetthree-sufficient}
	Under Assumption~\ref{ass:2}, if $\Msetthree = \emptyset$, then $\clconv(\PB \setminus \Sk^\Q) = \PB$.
\end{proposition}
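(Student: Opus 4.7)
The plan is to mirror the structure of the proof of Proposition~\ref{prop:no-tq-cuts}. Since every $\xbasis + \sum_j \lambda_j \rbar^j \in \PB$ with $\Lambda \coloneqq \sum_j \lambda_j > 0$ can be written as $\sum_j (\lambda_j / \Lambda)(\xbasis + \Lambda \rbar^j)$, it suffices to show that $\xbasis$ and every edge point $\xbasis + \mu \rbar^i$ ($i \in N$, $\mu > 0$) lies in $\clconv(\PB \setminus \Sk^\Q)$.

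First I would establish $\xbasis \in \clconv(\PB \setminus \Sk^\Q)$, which is the critical asymmetry with Proposition~\ref{prop:no-tq-cuts}: in that setting $\xbasis \notin \genset_{\D}^{\Q}$ is automatic, but here $\xbasis \in \Sk^\Q$ since $0 \in \lcinterval{0}{\exit_j}{j}$ for $j \in \N2$. Assumption~\ref{ass:2} supplies some $j^* \in \N0 \setminus \J$, and Proposition~\ref{prop:skq-pb-int} yields $\exitstar_{j^*} = 0$; hence the open ray $\{\xbasis + \lambda \rbar^{j^*} \colon \lambda > 0\}$ lies in $\PB \setminus \Sk^\Q$, and sending $\lambda \to 0^+$ places $\xbasis$ in $\cl(\PB \setminus \Sk^\Q)$.

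For an edge with $j \in N \setminus \J$, Proposition~\ref{prop:skq-pb-int} gives $\exitstar_j < +\infty$, so $\xbasis + \mu' \rbar^j \in \PB \setminus \Sk^\Q$ for any $\mu' > \exitstar_j$; any $\xbasis + \mu \rbar^j$ with $\mu > 0$ is then a convex combination of $\xbasis$ and $\xbasis + \mu' \rbar^j$ for $\mu'$ chosen large enough. For an edge with $i \in \J$, the ray is entirely inside $\Sk^\Q$, and I would exploit $\Msetthree = \emptyset$ to pick $j \in N \setminus \J$ with $\epshatthree_{ij} = 0$, so $\rbar^i + \epshatthree \rbar^j \notin \recc(\Sk^\Q)$ for every $\epshatthree > 0$. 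Fixing $\lambda, \epshatthree > 0$, scaling shows $\lambda \rbar^i + \epshatthree \rbar^j \notin \recc(\Sk^\Q)$, and since this vector lies in $\recc(\PB)$, for sufficiently large $M$ we have $\xbasis + M(\lambda \rbar^i + \epshatthree \rbar^j) \in \PB \setminus \Sk^\Q$. The identity
\[
\xbasis + \lambda \rbar^i + \epshatthree \rbar^j = (1 - 1/M)\xbasis + (1/M)\bigl(\xbasis + M\lambda \rbar^i + M\epshatthree \rbar^j\bigr)
\]
then writes the left side as a convex combination of two points in $\clconv(\PB \setminus \Sk^\Q)$, and sending $\epshatthree \to 0^+$ yields $\xbasis + \lambda \rbar^i \in \clconv(\PB \setminus \Sk^\Q)$.

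The main obstacle, and the reason this argument has more moving parts than Proposition~\ref{prop:no-tq-cuts}, is that $\xbasis$ itself belongs to $\Sk^\Q$, so one cannot directly start convex combinations at the apex. Assumption~\ref{ass:2} is precisely what breaks this impasse by supplying a direction from $\xbasis$ that immediately leaves $\Sk^\Q$, thereby anchoring the convex-combination arguments used to cover both the $N \setminus \J$ and the $\J$ edges.
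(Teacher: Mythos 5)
Your proof is correct and follows essentially the same route as the paper's: reduce to the apex and the edges of $\PB$, use Assumption~\ref{ass:2} together with Proposition~\ref{prop:skq-pb-int} to place $\xbasis$ in $\cl(\PB \setminus \Sk^\Q)$, handle $i \in N \setminus \J$ via finiteness of $\exitstar_i$, and handle $i \in \J$ by perturbing along a direction $\rbar^j$ with $\epshatthree_{ij} = 0$ and letting the perturbation vanish. The only difference is that you spell out two steps the paper leaves implicit (the convex-combination decomposition of a general point of $\PB$ into edge points, and the justification that $\xbasis \in \cl(\PB \setminus \Sk^\Q)$), which is a matter of exposition rather than substance.
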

\begin{proof}
	By Assumption~\ref{ass:2}, $\N0 \setminus \J \neq \emptyset$. We show $\{\xbasis\} + \lcinterval{0}{+\infty}{i} \subseteq \clconv(\PB \setminus \Sk^\Q)$ for all $i \in N$. Observe $\xbasis \in \cl(\PB \setminus \Sk^\Q)$ by Proposition~\ref{prop:skq-pb-int}.
	
	Consider any $i \in N \setminus \J$ and $\gamma > 0$. We show $\xbasis + \gamma \rbar^i \in \clconv(\PB \setminus \Sk^\Q)$. By Proposition~\ref{prop:skq-pb-int}, $\exitstar_i$ is finite. Then for a sufficiently large $M > \gamma$, $\xbasis + M \rbar^i \notin \Sk^\Q$. We have that $\xbasis + \gamma \rbar^i$ is a convex combination of $\xbasis \in \cl(\PB \setminus \Sk^\Q)$ and $\xbasis + M \rbar^i \in \PB \setminus \Sk^\Q$. Hence, $\xbasis + \gamma \rbar^i \in \clconv(\PB \setminus \Sk^\Q)$.
	
	Now, consider $i \in \J$, $\lambda > 0$, and $\epshatthree > 0$. Because $\Msetthree = \emptyset$, there exists $j \in N \setminus \J$ such that $\lambda \rbar^i + \epshatthree \rbar^j \notin \recc(\Sk^\Q)$. Then there exists $M > 1$ such $\xbasis + M(\lambda \rbar^i + \epshatthree \rbar^j)$ lies outside of $\Sk^\Q$. Therefore, $\xbasis + \lambda \rbar^i + \epshatthree \rbar^j$ is a convex combination of $\xbasis \in \cl(\PB \setminus \Sk^\Q)$ and $\xbasis + M(\lambda \rbar^i + \epshatthree \rbar^j) \in \PB \setminus \Sk^\Q$. This holds for an arbitrary $\epshatthree > 0$, so $\xbasis + \lambda \rbar^i \in \clconv(\PB \setminus \Sk^\Q)$.
\end{proof}

\section{Discussion and future work}

Our analysis requires the basic solution $\xbasis$ to lie outside $\cl(\Q)$. We showed in Section~\ref{sec:cuts-1} that if $\xbasis \in \Q$, we obtain the standard intersection cut of Balas. It remains to discuss how we can derive valid inequalities for $\P \setminus \Q$ when $\xbasis \in \bd(\Q)$.

Under Assumption~\ref{ass:1}, our analysis still applies if $\xbasis \in \bd(\Q)$. To demonstrate this, assume for simplification that $\N0 = \emptyset$ (this is a more restrictive version of Assumption~\ref{ass:1}). It follows that $\enter_j = 0$ for all $j \in N$. Similar to the observation made in Remark~\ref{remark:1} for the case $\xbasis \in \Q$, we can show that every point in $\PB \setminus \Q$ lies in $\{\xbasis\}$ or $\{x \in \PB \colon \sum_{j \in N} x_j / \exit_j \geq 1 \}$. We can generate inequalities for $\P \setminus \Q$ in a disjunctive CGLP using the two polyhedra defined by the constraints of $\P$ added to each of these two sets. Similarly, if $\xbasis \in \bd(\Q)$ and Assumption~\ref{ass:1} holds, the term $\PB \setminus \So^\Q$ of the multi-term disjunction \eqref{eq:disjunction} is equal to $\{\xbasis\}$. We can again use disjunctive programming to generate cuts for $\P \setminus \Q$ with the knowledge that $\PB \setminus \So^\Q = \{\xbasis\}$. Polyhedral relaxations for the remaining disjunctive terms can still be generated using the methods discussed in Section~\ref{subsec:skq-cuts-combined}.

We conclude with some ideas for future work. One direction is to study the computational strength of cuts obtained using these ideas. Another possibility is to generalize this disjunctive framework to allow for cuts to be generated by bases of rank less than $m$ (i.e., bases that do not admit a basic solution). Additionally, the strength of $\tqrelax_{\D}$ relative to $\PB \setminus \genset_{\D}^\Q$ could be analyzed. Specifically, it remains to be seen if $\tqrelax_{\D} = \conv(\PB \setminus \genset_{\D}^\Q)$, which by Theorem~\ref{thm:extendedform1} would imply that we have a polynomial-size extended formulation of $\conv(\PB \setminus \So^\Q)$. The same applies to the strength of $\skrelax$ relative to $\PB \setminus \Sk^\Q$ for $k \in \N2$.

\bibliographystyle{informs2014} 
\bibliography{refs} 

\end{document}